\newtheorem{assumption}{Assumption}
\newtheorem{theorem}{Theorem}
\newtheorem{proposition}{Proposition}
\newtheorem{lemma}{Lemma}
\newtheorem{corollary}{Corollary}
\newtheorem{remark}{Remark}
\newtheorem{proof}{Proof}
\newcommand{\tr}{\operatorname{tr}}
\newcommand{\divergence}{\operatorname{div}}
\newcommand{\req}[1]{Eq.\,(\ref{#1})}
\title{Quantification of Model Uncertainty on Path-Space via Goal-Oriented Relative Entropy}
\author{
  Jeremiah Birrell\\
  Department of Mathematics and Statistics\\
  University of Massachusetts Amherst\\
  Amherst, MA 01003,  USA \\
  \texttt{birrell@math.umass.edu} \\
  \And
    Markos A. Katsoulakis\\
    Department of Mathematics and Statistics\\
  University of Massachusetts Amherst\\
  Amherst, MA 01003,  USA \\
  \texttt{markos@math.umass.edu} \\
\And
    Luc Rey-Bellet\\
    Department of Mathematics and Statistics\\
  University of Massachusetts Amherst\\
  Amherst, MA 01003,  USA \\
  \texttt{luc@math.umass.edu} 
}
\begin{document}
\maketitle

\begin{abstract}
Quantifying the impact of parametric and model-form uncertainty on the predictions of stochastic models is a key challenge in many applications. Previous work has shown that the relative entropy rate is an effective tool for deriving path-space uncertainty quantification (UQ) bounds on ergodic averages.  In this  work we  identify  appropriate information-theoretic objects for a wider range of quantities of interest on path-space, such as hitting times and exponentially discounted observables, and develop the corresponding UQ bounds. In addition, our method yields tighter UQ bounds, even in cases where previous relative-entropy-based methods also apply, e.g., for ergodic averages.  We illustrate these results with examples from option pricing, non-reversible diffusion processes, stochastic control, semi-Markov queueing  models, and expectations and distributions of hitting times.
\end{abstract}

\keywords{uncertainty quantification;  relative entropy;  non-reversible diffusion processes; semi-Markov queueing models; stochastic control; option pricing; hitting times}

\section{Introduction}

Probabilistic models are widely used in  engineering, finance, physics, chemistry, and many other fields.  Models of real-world systems inherently carry uncertainty, either in the value of model parameters, or more general non-parametric (i.e., model-form) uncertainty. Such uncertainty can stem from fitting a model to data or from approximating a more complicated, intractable model with one that is simpler and computationally tractable. It is often important to estimate or bound the effect of such uncertainties on quantities-of-interest (QoIs) computed from the model, i.e, to obtain uncertainty quantification (UQ) bounds. This can be especially difficult for dynamical/time-series problems  over a long time-horizon, such as the computation of hitting times, reaction rates, cost functions, and option values. When the uncertainty is  parametric and small,  linearization and perturbative methods are effective tools for quantifying the impact of uncertainty on model predictions;  see, e.g., \cite{10.2307/41713721,doi:10.1063/1.4868649,10.1145/84537.84552,DKPR,Hairer_2010,KIM2007379,PLYASUNOV2007724,doi:10.1021/jp061858z,doi:10.1063/1.3677230}.  However, new general methods are needed when the uncertainties are  large or non-parametric. In this paper, we  present a framework for  addressing this problem for a wide range of path-space QoIs.

To  illustrate the challenges inherent in model-form uncertainty on path-space, consider the following examples  (see Section \ref{sec:examples} below for further detail):
\begin{enumerate}
\item  In mathematical finance,  the value of an asset, $X_t$, in a variable-interest-rate environment  may be modeled by a stochastic differential equation of the form
\begin{align}\label{eq:intro_option}
  d{X}_t=(r+\Delta r(t,{X}_t,Y_t)){X}_tdt+\sigma {X}_td{W}_t,\,\,\,\,r,\sigma>0.
\end{align}
Many QoIs, such as option values, involve hitting times for $X_t$, and hence are naturally phrased on path-space with a potentially large time horizon. When the variable  rate perturbation, $\Delta r$,  is influenced by some other process, $Y_t$, that is incompletely known (such as in the Vasicek model), one may wish to approximate \req{eq:intro_option} with a simpler, tractable model, e.g., geometric Brownian motion (obtained by setting $\Delta r\equiv 0$).  If the true $\Delta r$ is potentially large or has an incompletely known structure then perturbative methods are unable to provide reliable error estimates. One needs new, non-perturbative methods to obtain   bounds on option values, and other QoIs.

\item In queuing systems, Markovian models are widely used due to their mathematical simplicity, even in cases where  real-world data shows that  waiting times are non-exponential, and hence the systems are non-Markovian \cite{doi:10.1198/016214504000001808}. To quantify the model uncertainty, one must estimate/bound   the error incurred from using a Markovian approximation  to a non-Markovian system.     More generally, an approximate model may have a fundamentally different mathematical structure than the `true' model; such scenarios  can be difficult to study through perturbative methods.
\end{enumerate}

 Variational-principle-based methods, relating expectations of a quantity of interest to information-theoretic divergences,  have proven to be effective tools for deriving   UQ bounds.  While they are applicable to parametric uncertainty, such methods are particularly powerful when applied to systems with model-form uncertainty. Methods have been developed using  relative entropy \cite{chowdhary_dupuis_2013,BREUER20131552,GlassermanXu,doi:10.1287/moor.2015.0776,GKRW} and R{\'e}nyi divergences \cite{doi:10.1137/130939730,DKPR}. For stochastic processes, the relative entropy rate  has been used to derive UQ bounds on ergodic averages \cite{doi:10.1063/1.4789612,DKPP,doi:10.1137/15M1047271,KRW,doi:10.1137/19M1237429}.  Ergodic averages constitute an important class of QoIs on path space, but  many other key QoIs are not covered by these previous methods, such as the expectation of hitting times and  exponentially discounted observables.  Here, we develop a new general framework  for path-space UQ  by identifying a larger class of information-theoretic objects (the so-called goal-oriented relative entropies) and proving an accompanying new bound, Theorem \ref{thm:goal_info_ineq3}. This framework can be applied to qualitatively new regimes,  such as the aforementioned hitting times or to ergodic averages in the presence of unbounded perturbations. In addition, even when the previous methods apply (e.g., for ergodic averages with  bounded  perturbations), the  framework developed here  provides quantitatively tighter bounds.

In this work, UQ will refer to the following mathematical problem: One has a baseline model, described by  a probability measure $P$, and an alternative model, $\widetilde{P}$. For a given real-valued function (i.e., QoI), $F$,  one's goal is  to bound the error incurred from using $P$ in place of $\widetilde{P}$, i.e.,
\begin{align}\label{main_problem1}
\!\!\!\!\!\!\!\!\!\!\!\!\!\text{\bf Bound the bias: }\,\,\,\,\,\,\,\,  E_{\widetilde{P}}[F]-E_P[F].
\end{align}
 (Here, and in the following, $E_Q$ will denote the expectation under the probability measure $Q$.) $F$ is the quantity one is interested in, such as a hitting time or an ergodic average.   The baseline model, $P$, is typically an approximate but `tractable' model, meaning one can calculate QoIs exactly, or it is  relatively inexpensive to simulate. However, it generally contains many sources of error and uncertainty; it may depend on parameters with uncertain values (obtained from experiment, Monte-Carlo simulation, variational inference, etc.) or is obtained via some approximation procedure (coarse graining, neglecting memory terms, linearization, asymptotic approximation, etc.).   Any QoI computed from $P$ therefore has significant uncertainty associated with it. In contrast, $\widetilde{P}$, is thought of as the `true' (or at least, a more precise) model, but due to its complexity or a lack of knowledge, it is  intractable.   For instance, in the finance example discussed above, $\widetilde{P}$ is the distribution on path space of the solution to \req{eq:intro_option}, and one possible choice of   $P$  is the distribution  of geometric Brownian motion (i.e., the solution to \req{eq:intro_option} with $\Delta r\equiv 0$).  In the queuing example, $\widetilde{P}$ is the distribution of the `true' non-Markovian model on path-space, while $P$ is the distribution of the Markovian approximation.

The results in \cite{chowdhary_dupuis_2013,BREUER20131552,GlassermanXu,doi:10.1287/moor.2015.0776,GKRW} produce  bounds on the general UQ problem (\ref{main_problem1}) in terms of the relative entropy, i.e., Kullback-Leibler (KL) divergence, $R(\widetilde{P}\|P)$, which quantifies the discrepancy between the models $\widetilde{P}$ and $P$.  When comparing stochastic  processes, $\{X_t\}_{t\in[0,\infty)}$ and  $\{\widetilde X_t\}_{t\in[0,\infty)}$,   one often finds that their distributions on path-space, $P_{[0,\infty)}$ and $\widetilde{P}_{[0,\infty)}$,    have infinite relative entropy.  Hence, when dealing with an unbounded time horizon, these previous methods do not  produce (nontrivial) UQ bounds. As we will see, this infinite relative entropy is often an indication that one is using the wrong information-theoretic quantity. In \cite{DKPP}  it was shown how to overcome this issue when the QoI is an ergodic average;  there, the relative entropy rate  was found to be the proper information-theoretic quantity for bounding ergodic averages. See Section \ref{sec:UQ_background} for further  discussion of this, and  other  background material.   In this paper, we  derive a  general procedure for identifying  information-theoretic quantities, matched to a particular QoI, that can then be used to obtain UQ bounds.   

The  intuition underlying our approach is conveyed by the following example: Suppose one wants to bound the $\widetilde{P}$-expectation of a stopping time, $\tau$.  Even if $\tau$ is unbounded, the QoI $F=\tau$ depends only on the path of the process up to time $\tau$, and not on the  path for all $t\in[0,\infty)$.  Hence, one would expect  UQ bounds on $\tau$ to only require knowledge of the  dynamics up to the stopping time;   a bound on $R(\widetilde{P}_{[0,\infty)}\|P_{[0,\infty)})$ should not be necessary. Rather, one should be able to construct UQ bounds using the relative entropy between the baseline and alternative processes  {\em up to the stopping time}:
\begin{align}\label{eq:goal_oriented_KL}
R(\widetilde{P}|_{\mathcal{F}_{\tau}}\|P|_{\mathcal{F}_{\tau}}),
\end{align}
where $\mathcal{F}_\tau$ is the filtration for the process up to the stopping-time $\tau$; intuitively, $\mathcal{F}_\tau$ captures the information content of the process up to the stopping time. \req{eq:goal_oriented_KL} is an example of a goal-oriented relative entropy, for the QoI $\tau$. 

\begin{wraptable}{r}{.25\textwidth}
\centering
{\renewcommand{\arraystretch}{1.5} 
    \begin{tabular}{l|l}
     & \textbf{QoI} \\
     \hline
      1&$\widetilde{P}(\tau\leq T)$\\      
      2& $E_{\widetilde{P}}[\tau]$\\      
      3& $E_{\widetilde{P}}[\int_0^\infty f_s \lambda e^{-\lambda s}ds]$\\
     4& $E_{\widetilde{P}}[T^{-1}\int_0^T f(X_t)dt]$\\     
    \end{tabular}}
\caption{}\label{tab:QoIs}
\end{wraptable}

 Our results will make the above intuition rigorous and  general, and we will provide appropriate information-theoretic objects for computing UQ bounds on a variety of  path-space QoIs, generalizing \req{eq:goal_oriented_KL}. Moreover, we will show how partial information regarding  the structure of the alternative model, $\widetilde{P}$, can be used to  make such bounds {\em computable}. The method is developed in an abstract framework in Section \ref{sec:main_result}, before being specializing to path-space QoIs in Section \ref{sec:UQ_processes}. The following list shows several  classes of QoIs to which our methods apply (see also  Table \ref{tab:QoIs}):
\begin{enumerate}
\item Cumulative distribution function (CDF) of a stopping time.
\item Expected value of a stopping time.
\item Expectation of exponentially discounted QoIs.
\item Expectation of time averages.
\end{enumerate}
We will also show how one can bound goal-oriented relative entropies, such as \req{eq:goal_oriented_KL}, often a key step in obtaining computable UQ bounds.  Finally, we illustrate these  results  through several examples in Section \ref{sec:examples}:
  \begin{enumerate}
  \item The distribution and expectation of hitting times of Brownian motion with constant drift, as compared to Brownian motion perturbed by a non-constant drift; see Section \ref{sec:BM_const_drift} and Appendix \ref{app:BM_Etau}.
\item UQ bounds for invariant measures of non-reversible stochastic differential equations (SDEs); see Section \ref{example:non-rev}.
    \item Robustness of linear-quadratic stochastic control under non-linear perturbations; see Section \ref{example:control}.
      \item Robustness of continuous-time queueing models under non-exponential waiting-time perturbations (i.e.,  semi-Markov  perturbations); see Section \ref{example:birth_death}.
  \item The value of American put options in a variable-interest-rate environment (including the Vasicek model); see Section \ref{sec:options} and Appendix \ref{app:options}.
  \end{enumerate}

\section{Background on Uncertainty Quantification  via Information-Theoretic Variational Principles}\label{sec:UQ_background}
In this section we record important background  on the information-theoretic-variational-principle approach to UQ; the new methods presented in this paper (starting in Section \ref{sec:main_result}) will be built upon these foundations.

First, we fix some notation.   Let $P$ be a probability measure on a measurable space $(\Omega,\mathcal{F})$. $\overline{\mathbb{R}}$ will denote the extended reals  and we will refer to a random variable  $F: \Omega \to \overline{\mathbb{R}}$ as a quantity-of-interest (QoI).  If $F\in L^1(P)$ we write
\begin{align}
\widehat{F} = F - E_P[F]
\end{align}
for the centered quantity of mean $0$.  The cumulant generating function of $F$ is defined by 
\begin{equation}
\Lambda_P^F(c) = \log E_P[e^{ c F}],
\end{equation} 
where $\log$ denotes the natural logarithm and we use the continuous extensions of $\exp(x)$ to $\overline{\mathbb{R}}$ and of $\log(x)$ to $[0,\infty]$. 

Recall also that the relative entropy (i.e., KL divergence) of another probability measure, $\widetilde{P}$, with respect to $P$  is defined by 
\begin{equation}\label{rel_ent_def}
R( \widetilde{P} \| P)= \left\{ \begin{array}{cl}  E_{\widetilde{P}} \left[\log\left(\frac{d\widetilde{P}}{dP}\right)\right]  & \textrm{ if } \widetilde{P} \ll P  \\  + \infty & \textrm{ otherwise}
\end{array} \,.
\right. 
\end{equation}
It has the property of a divergence, that is $R( \widetilde{P} \| P) \ge 0$  and $R( \widetilde P \| P) = 0$ if and only if $\widetilde{P}=P$; see, for example, \cite{dupuis2011weak} for this and further properties.

The starting point of our approach to UQ on path-space is a KL-based UQ bound, which we here call the {\em UQ information inequality}.  The bound was derived in \cite{chowdhary_dupuis_2013,DKPP}; a similar inequality is used in the context of concentration inequalities, see e.g. \cite{BLM}, and was also used independently in  \cite{BREUER20131552,GlassermanXu}.  We summarize the proof  in Appendix \ref{app:base_inequality_proof} for completeness: Let  $F:\Omega\to {\mathbb{R}}$, $F\in L^1(\widetilde P)$.  Then 
\begin{align}\label{goal_oriented_bound}
\pm E_{\widetilde P}[F]\leq \inf_{c>0}\left\{\frac{1}{c} \Lambda_{P}^{{F}}(\pm c)+\frac{1}{c}R(\widetilde P\|P)\right\}.
\end{align}
If $F$ is also in $L^1(P)$ then one can apply \req{goal_oriented_bound} to the centered QoI, $\widehat{F},$ to obtain  centered UQ bounds, i.e.,  bounds on $E_{\widetilde P}[F]-E_P[F]$.  \req{goal_oriented_bound} is to be understood as two formulas, one with all the upper signs and another with all  the lower signs.  This remark applies to all other uses of $\pm$ in this paper. The bounds (\ref{goal_oriented_bound}) have many appealing properties, including tightness over relative-entropy neighborhoods \cite{chowdhary_dupuis_2013,DKPP}, a  linearization formula in the case of small relative entropy \cite{doi:10.1287/moor.2015.0776,DKPP}, and a divergence property \cite{DKPP}; see also \cite{doi:10.1111/mafi.12050,BREUER20131552,DKPR}.

 In \cite{DKPP,KRW,doi:10.1137/19M1237429}, \req{goal_oriented_bound} was used to derive UQ bounds for ergodic averages on path-space, both in discrete and continuous time. The goal of the present work is to extend these  methods to apply to more general  path-space QoIs. As motivation, here we provide a summary of one of the results from \cite{DKPP}:    Let $\mathcal{X}$ be a Polish space and consider two distributions, $P_{[0,T]}$ and $\widetilde{P}_{[0,T]}$, on path-space up to time $T$, $C([0,T],\mathcal{X})$; these are the baseline and alternative models respectively.  Assuming that the alternative process starts in an invariant distribution, $\widetilde{\mu}^*$, and the baseline process starts in an arbitrary distribution, $\mu$, one obtains the following UQ bound on ergodic averages (see Section 3.2 in  \cite{DKPP} for this result and further generalizations):\\
Let $f:\mathcal{X}\to\mathbb{R}$ be a bounded observable and  for $T>0$ define the time average $A_T=T^{-1}\int_0^Tf(X_t)dt$, where $X_t$ is evaluation at time $t$ for a path in  $C([0,T],\mathcal{X})$.  Then
\begin{align}\label{eq:ergodic_avg}
\pm\left(E_{\widetilde{P}_{[0,T]}}[A_T]-E_{P_{[0,T]}}[A_T]\right)\leq \inf_{c>0}\left\{\frac{1}{cT}\Lambda_{P_{[0,T]}}^{\widehat{A}_T}(\pm cT)+\frac{1}{c}H(\widetilde{P}\|P)+\frac{1}{cT}R(\widetilde{\mu}^*\|\mu)\right\},
\end{align}
where $H(\widetilde{P}\|P)$ is the relative entropy rate:
\begin{align}\label{eq:rel_ent_rate}
H(\widetilde{P}\|P)\equiv\lim_{T\to\infty}\frac{1}{T}R\left(\widetilde{P}_{[0,T]}\|P_{[0,T]}\right).
\end{align}
To arrive at  \req{eq:ergodic_avg}, one applies \req{goal_oriented_bound} to $T\widehat{A}_T$, divides by $T$, and employs the chain-rule of relative entropy to write the result in terms of the relative entropy rate. Under appropriate assumptions, one can show that the upper bound remains finite as $T\to\infty$. In particular, the relative entropy rate  between the alternative and baseline processes is positive and finite in many cases (see the online supplement to \cite{DKPP}), and hence provides an appropriate information-theoretic quantity for controlling ergodic averages in the long-time regime.

\section{UQ Bounds via Goal Oriented Information Theory}\label{sec:main_result}
\req{eq:ergodic_avg} provides UQ bounds on ergodic averages, but there are many other classes of path-space QoIs that are important in applications; see Table \ref{tab:QoIs}. Naively applying the UQ information inequality, \req{goal_oriented_bound}, to one of these other path-space QoIs that has an infinite time horizon (e.g., $F=\tau$ where $\tau$ is an unbounded stopping-time) will generally lead to trivial (i.e., infinite) bounds.  This is due to the fact that  $R(\widetilde{P}_{[0,\infty)}\|P_{[0,\infty)})$ is  infinite in most cases (see \req{eq:rel_ent_rate} and the subsequent discussion). For ergodic averages, the  argument leading to \req{eq:ergodic_avg} circumvents this difficulty by expressing the bound in terms of the relative entropy rate.  Although the derivation of  \eqref{eq:ergodic_avg} cannot be generalized to other QoIs, the idea of obtaining finite bounds by utilizing an alternative information theoretic quantity can be generalized, and we will do so in this section.

 A naive application of  \req{goal_oriented_bound}  ignores a hidden degree of freedom: the choice of sigma algebra on which one defines the models.  More specifically, if the QoI, $F$, is measurable with respect to  $\mathcal{G}$, a sub sigma-algebra of the full sigma algebra $\mathcal{F}$ on $\Omega$, then one can write
\begin{align}\label{eq:goal_exp}
E_{\widetilde{P}}[F]-E_{P}[F]=E_{\widetilde{P}|_{\mathcal{G}}}[F]-E_{P|_{\mathcal{G}}}[F],
\end{align}
where $Q|_{\mathcal{G}}$ denotes the restriction of a measure $Q$ to the sub sigma-algebra $\mathcal{G}$; $\mathcal{G}$ captures the information content of $F$ and working on $\mathcal{G}$ will lead to UQ bounds that are targeted at the QoI.  More specifically, one can  apply \req{goal_oriented_bound} to the baseline model $P|_{\mathcal{G}}$ and alternative model $\widetilde{P}|_{\mathcal{G}}$, thereby obtaining a UQ bound in terms of a {\em goal-oriented relative entropy} $R(\widetilde{P}|_{\mathcal{G}}\|P|_{\mathcal{G}})$.  We use the term goal-oriented in order to emphasize the fact that the information-theoretic quantity is tailored to the QoI under consideration, through the use of $\mathcal{G}$.  As we will show, the data-processing inequality (see \cite{1705001}) implies
\begin{align}
R(\widetilde{P}|_{\mathcal{G}}\|P|_{\mathcal{G}})\leq R(\widetilde{P}\|P),
\end{align}
and there are many situations where the inequality is strict.  Thus one often obtains tighter UQ bounds by using a goal-oriented relative entropy. The above idea is quite general, and so  we develop the  theory in this section without any reference to path-space. We also prove new (quasi)convexity properties regarding optimization problems of the form \eqref{goal_oriented_bound}.
\begin{theorem}[Goal-Oriented Information Inequality]\label{thm:obs_adap_info_ineq}
Let $P$ and $\widetilde{P}$ be probability measures on $(\Omega,\mathcal{F})$ and $F:\Omega\to\overline{\mathbb{R}}$.
\begin{enumerate}

\item Let $\mathcal{G}$ be a sub sigma algebra of $\mathcal{F}$ and $F\in L^1(\widetilde P)$  be $\mathcal{G}$-measurable. Then
\begin{align}
\pm E_{\widetilde P}[F]\leq& \inf_{c>0}\left\{\frac{1}{c} \Lambda_{P}^{F}(\pm c)+\frac{1}{c} R(\widetilde P|_{\mathcal{G}}\|P|_{\mathcal{G}})\right\}\label{eq:goal_info_ineq2}\\
=&\inf_{\eta>0}\left\{\eta \Lambda_{P}^{F}(\pm \eta^{-1})+\eta R(\widetilde P|_{\mathcal{G}}\|P|_{\mathcal{G}})\right\}, \label{eq:goal_info_ineq2_convex}
\end{align}
where we interpret $-\infty+\infty\equiv \infty$.  The objective function $c\mapsto \frac{1}{c} \Lambda_{P}^{F}(\pm c)+\frac{1}{c} R(\widetilde P|_{\mathcal{G}}\|P|_{\mathcal{G}})$ is quasiconvex and the objective function $\eta\mapsto \eta \Lambda_{P}^{F}(\pm \eta^{-1})+\eta R(\widetilde P|_{\mathcal{G}}\|P|_{\mathcal{G}})$ is convex.

\item If   $\mathcal{G}\subset\mathcal{H}\subset\mathcal{F}$ are sub sigma-algebras   then
\begin{align}\label{eq:data_proc}
 R(\widetilde{P}|_{\mathcal{G}}\|P|_{\mathcal{G}})\leq R(\widetilde{P}|_{\mathcal{H}}\|P|_{\mathcal{H}})\leq R(\widetilde{P}\|P).
\end{align}

\end{enumerate}

\end{theorem}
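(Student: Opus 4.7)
The plan is to deduce part~1 from a direct application of the UQ information inequality~(\ref{goal_oriented_bound}) on the restricted space $(\Omega,\mathcal{G})$, and to deduce part~2 from conditional Jensen (equivalently, the data-processing inequality). Neither step is deep; the main subtlety is keeping the convexity argument clean and handling the extended-real boundary cases.

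For part~1, the key observation is that $\mathcal{G}$-measurability of $F$ gives $E_{\widetilde{P}}[F] = E_{\widetilde{P}|_{\mathcal{G}}}[F]$ and $\Lambda_P^F(\pm c) = \log E_P[e^{\pm cF}] = \Lambda_{P|_{\mathcal{G}}}^F(\pm c)$, so (\ref{eq:goal_info_ineq2}) is just (\ref{goal_oriented_bound}) applied to the pair $(P|_{\mathcal{G}},\widetilde{P}|_{\mathcal{G}})$ on $(\Omega,\mathcal{G})$. The equivalence with (\ref{eq:goal_info_ineq2_convex}) is the monotone change of variables $\eta = c^{-1}$. For convexity, I recall that $\Lambda_P^F$ is convex on $\mathbb{R}$ by H\"older's inequality, and I recognize $\eta \mapsto \eta\,\Lambda_P^F(\pm\eta^{-1})$ as the perspective transform of $\Lambda_P^F$ evaluated at the point $\pm 1$, hence convex on $(0,\infty)$; adding the linear term $\eta R(\widetilde{P}|_{\mathcal{G}}\|P|_{\mathcal{G}})$ preserves convexity. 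Quasiconvexity in the $c$-variable then follows because a convex (hence quasiconvex) function composed with the monotone map $c\mapsto c^{-1}$ is quasiconvex: preimages of intervals under monotone maps remain intervals. The convention $-\infty+\infty\equiv\infty$ enters only at degenerate points where the objective equals $+\infty$ and does not interact with the convexity argument.

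For part~2, both inequalities are instances of the monotonicity of relative entropy under coarsening of $\sigma$-algebras, and one could simply cite a standard reference. For a self-contained derivation of the first inequality (the second is identical): assume $R(\widetilde{P}|_{\mathcal{H}}\|P|_{\mathcal{H}}) < \infty$, so that $\widetilde{P}|_{\mathcal{H}} \ll P|_{\mathcal{H}}$ and a fortiori $\widetilde{P}|_{\mathcal{G}} \ll P|_{\mathcal{G}}$. Writing $Z_{\mathcal{H}}$ and $Z_{\mathcal{G}}$ for the respective Radon--Nikodym derivatives, uniqueness forces $Z_{\mathcal{G}} = E_P[Z_{\mathcal{H}}\mid\mathcal{G}]$, and conditional Jensen applied to the convex $\phi(x) = x\log x$ followed by integration against $P$ yields the claim. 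I do not anticipate any real obstacle; the only spots that call for care are the extended-real bookkeeping in the convexity statement and the absolute-continuity reduction in part~2.
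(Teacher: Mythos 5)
Your proof is correct and, for part~1, matches the paper's argument essentially step for step: restrict to $(\Omega,\mathcal{G})$, apply \req{goal_oriented_bound}, change variables $\eta=1/c$, use H\"older for convexity of $\Lambda_P^F$, recognize the perspective transform, and add the linear term. Your quasiconvexity sub-argument differs cosmetically: you argue via sublevel sets being pulled back through the monotone bijection $c\mapsto c^{-1}$, whereas the paper instead writes $1/(\lambda c_1+(1-\lambda)c_0)=sc_1^{-1}+(1-s)c_0^{-1}$ for some $s\in(0,1)$ and invokes convexity of $h$ directly; both are standard and correct, and yours is arguably the cleaner one-liner. For part~2 you genuinely diverge from the paper: the paper views the identity as a measurable map $(\Omega,\mathcal{H})\to(\Omega,\mathcal{G})$ and cites the data-processing inequality for $f$-divergences, while you give a self-contained derivation via $Z_{\mathcal{G}}=E_P[Z_{\mathcal{H}}\mid\mathcal{G}]$ and conditional Jensen applied to $\phi(x)=x\log x$, after reducing to the finite-relative-entropy case. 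Your route is more elementary and requires no external reference, at the cost of a small amount of bookkeeping (the absolute-continuity reduction and the convention $0\log 0=0$); the paper's citation is terser but relies on the black-boxed $f$-divergence result. Both are sound, and the paper itself notes in a remark that the conditional-expectation/chain-rule route is available.
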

\begin{proof}
\begin{enumerate}
\item   For a real-valued $F\in L^1(\widetilde P)$, write
\begin{align} 
E_{\widetilde{P}}[F]=E_{\widetilde{P}|_{\mathcal{G}}}[F]
\end{align}
and then apply \req{goal_oriented_bound} to arrive at \req{eq:goal_info_ineq2}.  The bound can then be extend to $\overline{\mathbb{R}}$-valued $F$ by taking limits.  Next, by changing variables $c=1/\eta$ in the infimum we arrive at \req{eq:goal_info_ineq2_convex}. To see that the objective function in \eqref{eq:goal_info_ineq2_convex} is convex, first recall that the cumulant generating function $\Lambda_P^F(\pm c)$ is convex (this can be proven via H{\"o}lder's inequality).  The perspective of a convex function is convex (see, e.g., page 89 in \cite{boyd2004convex} and note that the result can easily be exteded to $\overline{\mathbb{R}}$-valued convex functions), and therefore $\eta\mapsto \eta \Lambda_P^F(\pm \eta^{-1})$ is convex.  Adding a linear term preserves convexity, hence $\eta \Lambda_{P}^{F}(\pm \eta^{-1})+\eta R(\widetilde P|_{\mathcal{G}}\|P|_{\mathcal{G}})$ 
is convex in $\eta$.  Finally, quasiconvexity of the objective function in \eqref{eq:goal_info_ineq2} follows from the general fact that if $h:(0,\infty)\to \overline{\mathbb{R}}$ is convex then $c\mapsto h(1/c)$ is quasiconvex on $(0,\infty)$.  To see this, let $c_1,c_0>0$ and $\lambda\in(0,1)$. Then $1/(\lambda c_1+(1-\lambda)c_0)= s c_1^{-1}+(1-s)c_0^{-1}$ for some $s\in(0,1)$.  Therefore, convexity of $h$ implies
\begin{align}\label{eq:quasiconvex}
h\left(1/(\lambda c_1+(1-\lambda)c_0)\right)\leq sh( c_1^{-1})+(1-s)h(c_0^{-1})\leq \max\{h(c_1^{-1}),h(c_0^{-1})\}.
\end{align}

 \item If   $\mathcal{G}\subset\mathcal{H}\subset\mathcal{F}$ are sub sigma-algebras   then we can write $P|_{\mathcal{G}}=(id)_*(P|_{\mathcal{H}})$ (we let $id$ denote the identity on $\Omega$, thought of as a measurable map from $(\Omega,\mathcal{H})$ to $(\Omega,\mathcal{G})$, and $\psi_*P$ denotes the distribution of the random quantity $\psi$), and similarly for $\widetilde{P}$. The data processing inequality (see Theorem 14 in \cite{1705001}) implies $R(\psi_*\widetilde{P}\|\psi_*P)\leq R(\widetilde{P}\|P)$ for any random quantity $\psi$, and so we arrive at \req{eq:data_proc}.
\end{enumerate}

\end{proof}
\begin{remark}\label{remark:unimodal}
  The objective functions in both \eqref{eq:goal_info_ineq2} and \eqref{eq:goal_info_ineq2_convex} are  unimodal, due to (quasi)convexity. In the majority of our theoretical discussions we will use the form \eqref{eq:goal_info_ineq2} of the UQ bound, but the convex form \eqref{eq:goal_info_ineq2_convex} is especially appealing  for computational purposes, as the optimization can be done numerically via standard techniques and with  guarantees on the convergence. In the examples in Section \ref{sec:examples} we will generally perform the optimization numerically, using either a  line search or  Nelder-Mead simplex method \cite{10.1093/comjnl/7.4.308,doi:10.1137/S1052623496303470}.
\end{remark}
\begin{remark}
\req{eq:data_proc} is a version of the data processing inequality, which holds in much greater generality than the form  stated above.  In particular, it holds for any $f$-divergence \cite{1705001}. For relative entropy, the data-processing inequality can be obtained from the chain rule, together with the fact that marginalization can only reduce the relative entropy.
\end{remark}
In particular, one can bound the probability of an event  $A\in\mathcal{G}$ by applying Theorem \ref{thm:obs_adap_info_ineq} to $F=1_A$ (see the example in Section \ref{sec:Vasicek}):
\begin{corollary}\label{cor:event_prob_UQ}
Let $\mathcal{G}$ be a sub sigma-algebra of $\mathcal{F}$ and $A\in\mathcal{G}$.  Then
\begin{align}\label{eq:event_UQ_bound}
&\pm \left(\widetilde P(A)-P(A)\right)\\
\leq& \inf_{c>0}\left\{\frac{1}{c} \log \left(P(A)e^{\pm c(1-P(A))}+(1-P(A))e^{\mp cP(A)}\right)+\frac{1}{c}R(\widetilde{P}|_{\mathcal{G}}\|{P}|_{\mathcal{G}})\right\}.\notag
\end{align}
 
\end{corollary}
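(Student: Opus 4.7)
The plan is to apply Theorem \ref{thm:obs_adap_info_ineq} directly to the centered indicator $F = 1_A - P(A)$. Since $A \in \mathcal{G}$ and $P(A)$ is a deterministic constant, $F$ is bounded and $\mathcal{G}$-measurable, hence in $L^1(\widetilde{P})$, so Theorem \ref{thm:obs_adap_info_ineq}(1) applies. The expectation under the alternative model is $E_{\widetilde{P}}[F] = \widetilde{P}(A) - P(A)$, which immediately produces the left-hand side of \eqref{eq:event_UQ_bound} after taking $\pm$.

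The only substantive computation is evaluating the cumulant generating function $\Lambda_P^F(\pm c)$. This is straightforward because $F$ is a simple random variable taking exactly two values: $F = 1 - P(A)$ on $A$ and $F = -P(A)$ on $A^c$. Therefore
\begin{equation*}
E_P\bigl[e^{\pm c F}\bigr] = P(A)\, e^{\pm c(1-P(A))} + (1-P(A))\, e^{\mp c P(A)},
\end{equation*}
and taking logarithms gives $\Lambda_P^F(\pm c)$ in the exact form that appears inside the logarithm of \eqref{eq:event_UQ_bound}. Substituting this expression and the bound $R(\widetilde{P}|_{\mathcal{G}} \| P|_{\mathcal{G}})$ into \eqref{eq:goal_info_ineq2} yields \eqref{eq:event_UQ_bound}.

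There is essentially no obstacle here: the corollary is a direct specialization of Theorem \ref{thm:obs_adap_info_ineq} to an indicator QoI, with the cumulant generating function admitting a closed form because $1_A$ is Bernoulli under $P$. One minor point worth stating explicitly is that shifting $1_A$ by the constant $P(A)$ does not affect $\mathcal{G}$-measurability and only shifts $\Lambda_P^{1_A}(\pm c)$ by $\mp c P(A)$; writing the bound in the centered form makes the symmetry of the $\pm$ statement manifest, which matches the form in \eqref{eq:event_UQ_bound}.
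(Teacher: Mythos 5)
Your proposal is correct and matches the paper's route: the paper obtains Corollary \ref{cor:event_prob_UQ} by applying Theorem \ref{thm:obs_adap_info_ineq} to the indicator $1_A$ (centered, as is standard when one wants a bound on $\widetilde P(A)-P(A)$), and the only computation needed is the Bernoulli moment generating function, exactly as you carried out. Your choice to work directly with $F=1_A-P(A)$ rather than $1_A$ followed by a shift is an algebraically trivial rephrasing and yields the displayed formula immediately.
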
 
\begin{remark}
UQ bounds based on Corollary \ref{cor:event_prob_UQ} are far from  optimal for very rare events (i.e., when $P(A)\ll 1$). See \cite{doi:10.1137/130939730,DKPR} for a related approach to UQ, using R{\'e}nyi divergences instead of relative entropy, which produces tighter bounds for rare events.
 \end{remark}

  Theorem \ref{thm:obs_adap_info_ineq} is particularly useful because of the manner in which it  splits the UQ problem into two sub-problems:
\begin{enumerate}
\item Compute or bound the cumulant generating function of the QoI with respect to the baseline model, $\Lambda_{P}^{{F}}(c)$. (Note that there is no dependence on the alternative model, $\widetilde P$.)
\item Bound the relative entropy of the alternative model with respect to the baseline model, $R(\widetilde P|_{\mathcal{G}}\|P_{\mathcal{G}})$, on an appropriate sub sigma-algebra, $\mathcal{G}$. (The primary difficulty here is the presence of the intractable/unknown model $\widetilde{P}$.)
\end{enumerate}

We generally consider the baseline model, $P$, and hence sub-problem (1), to be `tractable'.  In practice, computing or bounding the CGF under $P$  may still be a difficult task. Bounds on CGFs are often used in the derivation of   concentration inequalities; see \cite{BLM} for an overview of this much-studied problem. For prior uses of CGF bounds in UQ, see \cite{doi:10.1137/10080782X,GKRW,doi:10.1137/19M1237429}.     With these points in mind, our focus  will primarily  be on the second problem; our examples  in Section \ref{sec:examples} will largely involve  baseline models for which the CGF is relatively straightforward to compute.  

Theorem \ref{thm:goal_info_ineq3}, below, shows how one can use  partial information/bounds on the intractable model, $\widetilde{P}$, to produce a computable UQ bound, thus addressing sub-problem 2.  The key step is to select a sub sigma algebra $\mathcal{G}$ and a  $\mathcal{G}$-measurable $G\in L^1(\widetilde{P})$ for which the following relative entropy bound holds:
\begin{align}\label{eq:rel_ent_assump}
R(\widetilde P|_{\mathcal{G}}\|P|_{\mathcal{G}})\leq E_{\widetilde{P}}[G].
\end{align}

\begin{theorem}\label{thm:goal_info_ineq3}
Let  $F:\Omega\to\overline{\mathbb{R}}$, $F\in L^1(\widetilde P)$, be $\mathcal{G}$-measurable, where 
 $\mathcal{G}\subset\mathcal{F}$ is a sub sigma-algebra, and suppose we have a $\mathcal{G}$-measurable real-valued $G\in L^1(\widetilde{P})$ that satisfies $R(\widetilde P|_{\mathcal{G}}\|P|_{\mathcal{G}})\leq E_{\widetilde{P}}[G]$.   Then 
\begin{align} 
\pm E_{\widetilde P}[F]\leq& \inf_{c>0}\left\{\frac{1}{c} \log E_P\left[\exp\left(\pm c F+G\right)\right]\right\}\label{eq:goal_info_ineq3}\\
=&\inf_{\eta>0}\left\{\eta \log E_P\left[\exp\left(\pm \eta^{-1} F+G\right)\right]\right\}.\label{eq:goal_info_ineq3_convex}
\end{align}
The objective function $(0,\infty)\to (-\infty,\infty]$, $c\mapsto \frac{1}{c} \log E_P\left[\exp\left(\pm c F+G\right)\right]$ is quasiconvex and the objective function $(0,\infty)\to (-\infty,\infty]$, $\eta\mapsto \eta \log E_P\left[\exp\left(\pm \eta^{-1} F+G\right)\right]$ is convex.
\end{theorem}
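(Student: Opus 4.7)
The plan is to apply the goal-oriented bound \eqref{eq:goal_info_ineq2} from Theorem \ref{thm:obs_adap_info_ineq} not to $F$ directly, but to the combined QoI $\pm c F + G$, taking advantage of the fact that both $F$ and $G$ are $\mathcal{G}$-measurable. Specifically, for each fixed $c>0$, I would invoke \eqref{eq:goal_info_ineq2} with the upper sign and scaling parameter $1$ applied to the $\mathcal{G}$-measurable real-valued quantity $H := \pm c F + G \in L^1(\widetilde P)$, obtaining
\begin{align*}
\pm c\, E_{\widetilde P}[F] + E_{\widetilde P}[G] \;=\; E_{\widetilde P}[H] \;\leq\; \log E_P\!\left[e^{\pm c F + G}\right] + R(\widetilde P|_{\mathcal{G}}\|P|_{\mathcal{G}}).
\end{align*}

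The next step is to use the hypothesis $R(\widetilde P|_{\mathcal{G}}\|P|_{\mathcal{G}}) \leq E_{\widetilde P}[G]$ to cancel the $E_{\widetilde P}[G]$ contribution against the relative entropy. This gives $\pm c\, E_{\widetilde P}[F] \leq \log E_P[e^{\pm c F + G}]$. Dividing by $c>0$ and infimizing over $c$ yields \eqref{eq:goal_info_ineq3}, while the substitution $c = 1/\eta$ produces the equivalent form \eqref{eq:goal_info_ineq3_convex}. A small bookkeeping point will be handling the $\overline{\mathbb{R}}$-valued case of $F$ by truncation/limit, exactly as in the proof of Theorem \ref{thm:obs_adap_info_ineq}, and the convention $-\infty+\infty \equiv \infty$ makes the bound vacuously true whenever the right-hand side is $+\infty$, so existence of any $c$-dependent quantities is not an issue.

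For the (quasi)convexity claims, I would verify that $c \mapsto \phi(c) := \log E_P[e^{\pm c F + G}]$ is convex on $(0,\infty)$ by a direct application of H{\"o}lder's inequality: for $c = \lambda c_0 + (1-\lambda) c_1$ one writes $e^{\pm c F + G} = \bigl(e^{\pm c_0 F + G}\bigr)^\lambda \bigl(e^{\pm c_1 F + G}\bigr)^{1-\lambda}$, using crucially that $\lambda G + (1-\lambda) G = G$, and applies H{\"o}lder with exponents $1/\lambda$ and $1/(1-\lambda)$. Then $\eta \mapsto \eta \phi(1/\eta)$ is convex because it is the perspective of a convex function, exactly as cited in the proof of Theorem \ref{thm:obs_adap_info_ineq}, and quasiconvexity of $c \mapsto \phi(c)/c$ follows from the same reparametrization argument given in \eqref{eq:quasiconvex}.

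The main obstacle, if there is one, is purely conceptual: recognizing that the correct move is to apply the base inequality to the \emph{combined} observable $\pm c F + G$ rather than attempting to bound $E_{\widetilde P}[G]$ separately (which would merely re-introduce a relative-entropy term and leave one chasing one's tail). Once that is spotted, the algebra is essentially one line and the convexity analysis mirrors that of Theorem \ref{thm:obs_adap_info_ineq}.
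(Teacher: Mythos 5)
Your proposal is correct and follows essentially the same route as the paper: the key cancellation is identical (the paper applies \eqref{eq:goal_info_ineq2} to $F\pm c_0^{-1}G$ at parameter $c_0$, which is the same as your application to $\pm cF+G$ at parameter $1$, up to trivial rescaling), and the convexity/quasiconvexity arguments via H{\"o}lder and the perspective construction match the paper's line for line.
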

\begin{remark}
 The bounds \eqref{eq:goal_info_ineq3} (or, equivalently, \eqref{eq:goal_info_ineq3_convex}) are tight over the set of allowed $G$'s; see Appendix \ref{app:tight} for a precise statement and proof of this fact.
\end{remark}
\begin{proof}

Given $c_0>0$, apply \req{eq:goal_info_ineq2} from Theorem \ref{thm:obs_adap_info_ineq}  and  the assumption (\ref{eq:rel_ent_assump}) to the $\mathcal{G}$-measurable QoI, $F_{\pm c_0}\equiv F\pm c_0^{-1}G$. Doing so yields
\begin{align}
\pm E_{\widetilde P}[F\pm c_0^{-1}G]\leq& \inf_{c>0}\left\{\frac{1}{c} \Lambda_{P}^{F_{\pm c_0}}(\pm c)+\frac{1}{c}R(\widetilde P|_{\mathcal{G}}\|P|_{\mathcal{G}})\right\}\\
\leq& \inf_{c>0}\left\{\frac{1}{c} \Lambda_{P}^{F_{\pm c_0}}(\pm c)+\frac{1}{c}E_{\widetilde{P}}[G]\right\}.\notag
\end{align}
In particular, bounding the right-hand-side by the value at $c=c_0$, we can cancel the $E_{\widetilde{P}}[G]$ term (which is assumed to be finite) to find
\begin{align}
\pm E_{\widetilde P}[F]\leq&  \frac{1}{c_0} \Lambda_{P}^{F_{\pm c_0}}(\pm c_0).
\end{align}
Taking the infimum over all $c_0>0$ yields \eqref{eq:goal_info_ineq3}.  \req{eq:goal_info_ineq3_convex} then follows by changing variables $c=1/\eta$.  To prove convexity of the objective function in \req{eq:goal_info_ineq3_convex}, first note that the map $c\mapsto \log\int e^{\pm c F}d\mu$ is convex on $(0,\infty)$ for all positive measures, $\mu$; this can be proven using H{\"o}lder's inequality in the same way that one proves convexity of the CGF.  Applying this to $d\mu=e^{G}dP$ we see that  $\log E_P\left[\exp\left(\pm c F+G\right)\right]$ is convex in $c$.  Therefore, by again using convexity of the perspective of a convex function, we find that $\eta\mapsto \eta\log E_P\left[\exp\left(\pm \eta^{-1} F+G\right)\right]$ is convex. Finally, using the quasiconvexity property proven above in \req{eq:quasiconvex}, we see that $c\mapsto c^{-1}\log E_P\left[\exp\left(\pm c F+G\right)\right]$ is quasiconvex, thus completing the proof.
\end{proof}
\begin{remark}\label{remark:old_strat}
 Previously, a primary strategy for using the non-goal-oriented bound \eqref{goal_oriented_bound} in UQ was to find a function $G$ such that $R(\widetilde{P}\|P)\leq E_{\widetilde{P}}[G]$, compute an explicit upper  bound $G\leq D\in\mathbb{R}$,  bound $R(\widetilde{P}\|P)\leq D$, and use this to obtain the (in principle) computable bounds
\begin{align} 
\pm E_{\widetilde P}[F]\leq \inf_{c>0}\left\{\frac{1}{c} \Lambda_{P}^{{F}}(\pm c)+\frac{D}{c}\right\}.
\end{align}
  Of course, if $G$ is not bounded then the resulting UQ bound is uninformative.  The new result \eqref{eq:goal_info_ineq3} is  tighter than the above described strategy, as can be seen by bounding $G\leq D$ in the exponent of \req{eq:goal_info_ineq3}.  In addition, \eqref{eq:goal_info_ineq3} can give finite results even when $G$ is not bounded, as we demonstrate in the examples in Sections \ref{example:non-rev} and \ref{sec:Vasicek}.
\end{remark}
 
Theorem \ref{thm:goal_info_ineq3} takes a relative entropy bound of the form (\ref{eq:rel_ent_assump}) (which involves an expectation under the intractable model $\widetilde{P}$) and produces a UQ bound which only involves expectations under the tractable model, $P$ (see  \req{eq:goal_info_ineq3}).    However, in practice  it is not enough to use just any $G$ that satisfies \req{eq:rel_ent_assump}. If the relative entropy on $\mathcal{G}$ is finite then the (trivial) choice of $G=\log(d\widetilde{P}|_{\mathcal{G}}/dP|_{\mathcal{G}})$ always satisfies  \req{eq:rel_ent_assump} (with equality).   Rather, the practical question is whether one can find a {\em tractable} $G$  that satisfies \req{eq:rel_ent_assump}; the trivial choice  $G=\log(d\widetilde{P}|_{\mathcal{G}}/dP|_{\mathcal{G}})$  typically {\em does not} satisfy this tractability requirement.  Given the fact that $\widetilde{P}$ may only be partially known,  a tractable $G$ will generally need to incorporate bounds/partial information regarding $\widetilde{P}$.   

The choice of an appropriate  $\mathcal{G}$ goes hand-in-hand with the choice of $G$.  The bounds in Theorem \ref{thm:obs_adap_info_ineq} become tighter as one makes $\mathcal{G}$ smaller, with the tightest bound obtained when $\mathcal{G}=\sigma(F)$, where $\sigma(F)$ is the sigma algebra generated by $F$ (i.e., generated by  $\{F^{-1}((a,\infty]): a\in\mathbb{R}\}$), in which case the relative entropy is  $R(\widetilde{P}|_{\sigma(F)}\|P|_{\sigma(F)})=R(F_*\widetilde{P}\|F_*P)$.  However, one must again balance the desire for tightness against the need for {\em computable} bounds. It is generally very difficult to find a tractable $G$ for the relative entropy on $\sigma(F)$, and so a larger sub sigma-algebra must be used.  While the task of finding a suitable $\mathcal{G}$ and $G$ is  problem specific,  for QoIs on path space there are  general strategies one can follow.  We discuss such strategies in Section \ref{sec:UQ_processes} and Appendix \ref{sec:markov_rel_ent} below; see also the concrete examples in Section \ref{sec:examples}.

In some cases,  the right-hand-side of the UQ bound (\ref{eq:goal_info_ineq3}) is still difficult to compute, and so  it can be useful to first  bound  $R(\widetilde{P}|_{\mathcal{G}}\|P|_{\mathcal{G}})$  and then use the UQ bound from Theorem \ref{thm:obs_adap_info_ineq} directly; for example, see the analysis of the Vasicek model  in Section \ref{sec:Vasicek}.  The following corollary, obtained by applying Theorem \ref{thm:goal_info_ineq3} to $F=G$ and then reparametrizing the infimum, is useful for this purpose:
\begin{corollary}\label{lemma:gen_rel_ent_bootstrap}
Suppose $G:\Omega\to {\mathbb{R}}$, $G\in L^1(\widetilde{P})$ is $\mathcal{G}$-measurable for some sub sigma-algebra $\mathcal{G}\subset\mathcal{F}$, and $R(\widetilde{P}|_{\mathcal{G}}\|P|_{\mathcal{G}})\leq E_{\widetilde{P}}\left[G\right]$. Then
\begin{align}\label{eq:gen_rel_ent_bootstrap}
 R(\widetilde{P}|_{\mathcal{G}}\|P|_{\mathcal{G}})\leq& \inf_{\lambda>1}\left\{(\lambda-1)^{-1}\Lambda_{P}^{G}( \lambda)\right\}.
\end{align}
\end{corollary}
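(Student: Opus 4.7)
The strategy is already sketched in the text preceding the corollary: apply Theorem \ref{thm:goal_info_ineq3} with $F = G$ (using the upper-sign version) and then relabel the optimization variable. The goal is to turn the mixed inequality of Theorem \ref{thm:goal_info_ineq3} into a pure bound on the relative entropy by exploiting the hypothesis $R(\widetilde P|_{\mathcal G} \| P|_{\mathcal G}) \le E_{\widetilde P}[G]$.

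First, observe that $G$ itself plays the dual role of the QoI (via $F=G$) and of the function controlling the relative entropy. Plugging $F=G$ with the $+$ sign into \eqref{eq:goal_info_ineq3} gives
\begin{align*}
E_{\widetilde P}[G] \le \inf_{c>0}\left\{\frac{1}{c}\log E_P\!\left[\exp(cG + G)\right]\right\} = \inf_{c>0}\left\{\frac{1}{c}\log E_P\!\left[e^{(c+1)G}\right]\right\}.
\end{align*}
Combining this with the standing hypothesis $R(\widetilde P|_{\mathcal G}\|P|_{\mathcal G}) \le E_{\widetilde P}[G]$ yields
\begin{align*}
R(\widetilde P|_{\mathcal G}\|P|_{\mathcal G}) \le \inf_{c>0}\left\{\frac{1}{c}\,\Lambda_P^{G}(c+1)\right\}.
\end{align*}

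Next I would perform the change of variables $\lambda = c+1$, so that $c>0$ corresponds to $\lambda > 1$ and $1/c = 1/(\lambda-1)$. This immediately rewrites the right-hand side as $\inf_{\lambda>1}\{(\lambda-1)^{-1}\Lambda_P^G(\lambda)\}$, which is exactly \eqref{eq:gen_rel_ent_bootstrap}.

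The only potential subtlety is the applicability of Theorem \ref{thm:goal_info_ineq3} with $F=G$: this requires $G$ to be $\mathcal G$-measurable and in $L^1(\widetilde P)$, both of which are given, and it requires the entropy bound $R(\widetilde P|_{\mathcal G}\|P|_{\mathcal G}) \le E_{\widetilde P}[G]$, which is precisely the hypothesis of the corollary. If $\Lambda_P^G(\lambda) = +\infty$ for every $\lambda>1$ the right-hand side is $+\infty$ and the inequality is vacuous, while if the infimum is finite the argument above delivers it with no additional work. Thus there is essentially no obstacle; the content of the corollary is the observation that self-applying Theorem \ref{thm:goal_info_ineq3} to $G$ produces a bound on $R(\widetilde P|_{\mathcal G}\|P|_{\mathcal G})$ expressed purely in terms of cumulants under the tractable baseline $P$.
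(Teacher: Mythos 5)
Your proof is correct and follows exactly the paper's stated approach: apply Theorem \ref{thm:goal_info_ineq3} with $F = G$ (upper sign), use the hypothesis $R(\widetilde P|_{\mathcal G}\|P|_{\mathcal G}) \le E_{\widetilde P}[G]$, then reparametrize $\lambda = c+1$. The paper only sketches this in one line, and you have fleshed out the same argument faithfully.
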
 
  Note that the optimization in \eqref{eq:gen_rel_ent_bootstrap} is over $\lambda>1$, as opposed to over $c>0$ like in Theorems \ref{thm:obs_adap_info_ineq} and \ref{thm:goal_info_ineq3}.
\begin{remark}
 Sensitivity analysis of a QoI to perturbations within a parametric family, $P_\theta$, $\theta\in U\subset\mathbb{R}^d$, was also studied in  \cite{DKPP}.  The extension to goal-oriented relative entropy proceeds similarly, assuming one can find a density for the restricted measure with respect to some common dominating measure: $dP_\theta|_{\mathcal{G}}=p_\theta^{\mathcal{G}} d\mu|_{\mathcal{G}}$.  Examples of such densities   for various classes of Markov processes can be found in Appendix \ref{sec:markov_rel_ent}.    Once one has $p_\theta^{\mathcal{G}}$, the computation of sensitivity bounds proceeds as in \cite{DKPP}, and so we make no further comments on sensitivity analysis here.
\end{remark}
\begin{remark}
When $P$ is part of a tractable parametric family, $\{P_\theta\}_{\theta\in U}$,  the following technique can be used to improve the UQ bounds:  If one can carry out the procedure of Theorem \ref{thm:goal_info_ineq3} for each $\theta\in U$ then, by minimizing over $\theta$, one obtains
 \begin{align}\label{eq:goal_info_ineq_base_param}
\pm E_{\widetilde P}[F]\leq \inf_{c>0}\left\{\frac{1}{c} \inf_{\theta\in U}\log E_{P_\theta}\left[\exp\left(\pm c F+G_\theta\right)\right]\right\}.
\end{align}
See Figure \ref{fig:options_plot_r_drop} in Appendix \ref{app:options} for an example that employs this idea.
\end{remark}

\section{Goal-Oriented UQ for QoIs up to a Stopping Time}\label{sec:UQ_processes}
We now specialize the methods of Section \ref{sec:main_result}  to  the study of   path-space QoIs. Concrete examples can be found in Section \ref{sec:examples}, but we first discuss the guiding principles and common themes that underlie these applications. Section \ref{sec:path_space_setting}  sets up the framework for UQ on path-space.  Specifically,  we  consider QoIs up to a stopping time.  Many important problem types fit under this umbrella, such as discounted observables, the CDF of a stopping time, expectation of a stopping time, as well as the previously studied ergodic averages (see Table \ref{tab:QoIs}); Sections \ref{subsec:exp_stop_time} - \ref{sec:time_avg} discuss several of these problem types   in further detail.

\subsection{General Setting for UQ on Path Space up to a Stopping Time}\label{sec:path_space_setting}

 The general setting in which we  derive path-space UQ bounds is as follows:
\begin{assumption}\label{assump:time_dep_QoI}
Suppose:
\begin{enumerate}
\item  $(\Omega,\mathcal{F}_\infty,\{\mathcal{F}_t\}_{t\in\mathcal{T}})$ is a filtered probability space, where $\mathcal{T}=[0,\infty)$, or $\mathcal{T}=\mathbb{Z}_0$.  We define $\overline{\mathcal{T}}\equiv \mathcal{T}\cup\{\infty\}$.
\item $P$ and $\widetilde P$ are probability measures on $(\Omega,\mathcal{F}_\infty)$.
\item  $F:\overline{\mathcal{T}}\times\Omega\to\overline{\mathbb{R}}$, written $F_t(\omega)$, is progressively measurable (progressive), i.e., $F$ is measurable and $F|_{[0,t]\times\Omega}$ is $\mathcal{B}([0,t])\bigotimes \mathcal{F}_t$-measurable for all $t\in \mathcal{T}$ (intervals refer to subsets of $\mathcal{T}$).
\item $\tau:\Omega\to \overline{\mathcal{T}}$ is a $\mathcal{F}_t$-stopping time.
\end{enumerate}
\end{assumption}

We will derive UQ bounds for a process $F_t$, stopped at $\tau$, which we denote by $F_\tau$.   First recall:
\begin{lemma}\label{lemma:F_tau}
$F_\tau$ is $\mathcal{F}_{\tau}$-measurable, where
\begin{align}
\mathcal{F}_\tau=\{A\in\mathcal{F}_\infty: \forall t\in \mathcal{T}, A\cap\{\tau\leq t\}\in\mathcal{F}_t\}
\end{align}
is the filtration up to the stopping-time $\tau$, a sub sigma-algebra of $\mathcal{F}_\infty$. 
\end{lemma}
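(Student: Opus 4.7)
The plan is to prove the lemma in two parts: first verify that $\mathcal{F}_\tau$ as defined is a sub-$\sigma$-algebra of $\mathcal{F}_\infty$, and then show that $F_\tau$ is measurable with respect to it. The first part is a routine diagram chase: $\Omega \in \mathcal{F}_\tau$ since $\{\tau \leq t\} \in \mathcal{F}_t$ for all $t\in\mathcal{T}$; for $A \in \mathcal{F}_\tau$, one writes $A^c \cap \{\tau \leq t\} = \{\tau \leq t\} \setminus (A \cap \{\tau \leq t\})$ to get closure under complements; and for $\{A_n\} \subset \mathcal{F}_\tau$, distributivity of intersection over union yields $\bigl(\bigcup_n A_n\bigr) \cap \{\tau \leq t\} = \bigcup_n \bigl(A_n \cap \{\tau \leq t\}\bigr) \in \mathcal{F}_t$.

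The substantive part is the $\mathcal{F}_\tau$-measurability of $F_\tau$. Fix a Borel set $B \subset \overline{\mathbb{R}}$ and $t \in \mathcal{T}$; I need to show $\{F_\tau \in B\} \cap \{\tau \leq t\} \in \mathcal{F}_t$. The key observation is that on the event $\{\tau \leq t\}$, the stopped value $F_\tau$ coincides with $F_{\tau \wedge t}$, so
\begin{align*}
\{F_\tau \in B\} \cap \{\tau \leq t\} = \{F_{\tau \wedge t} \in B\} \cap \{\tau \leq t\}.
\end{align*}
Since $\{\tau \leq t\} \in \mathcal{F}_t$ by the stopping-time property, it suffices to establish that $F_{\tau \wedge t}$ is $\mathcal{F}_t$-measurable. (Note that this reduction sidesteps any concern about $F_\infty$ on $\{\tau = \infty\}$, since that event is disjoint from $\{\tau \leq t\}$ for finite $t$.)

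To obtain the $\mathcal{F}_t$-measurability of $F_{\tau\wedge t}$, I compose two measurable maps. Consider $\phi_t : (\Omega, \mathcal{F}_t) \to ([0,t] \times \Omega,\, \mathcal{B}([0,t]) \otimes \mathcal{F}_t)$ defined by $\phi_t(\omega) = (\tau(\omega) \wedge t,\, \omega)$. The first coordinate is $\mathcal{F}_t$-measurable because $\tau$ is a stopping time (so $\tau\wedge t$ is), and the second coordinate is the identity; hence $\phi_t$ is measurable into the product $\sigma$-algebra. The progressivity hypothesis is exactly the statement that $F|_{[0,t] \times \Omega}$ is $\mathcal{B}([0,t]) \otimes \mathcal{F}_t$-measurable, so the composition $F_{\tau \wedge t} = F \circ \phi_t$ is $\mathcal{F}_t$-measurable, completing the argument. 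The discrete case $\mathcal{T} = \mathbb{Z}_0$ proceeds identically with $\mathcal{B}([0,t])$ replaced by the power set of $\{0, 1, \ldots, t\}$.

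I do not anticipate any real obstacle here; the only subtle point is the reduction from $F_\tau$ to $F_{\tau \wedge t}$ on $\{\tau \leq t\}$, which makes the value of $F$ at $\tau = \infty$ irrelevant for the $\mathcal{F}_\tau$-membership criterion.
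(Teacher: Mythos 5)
Your proof is correct and follows the standard textbook argument; the paper itself states this lemma as a fact to ``recall'' without giving a proof, so there is no alternative approach to compare against. The reduction to $F_{\tau\wedge t}$ on $\{\tau\le t\}$ and the composition $F\circ\phi_t$ with $\phi_t(\omega)=(\tau(\omega)\wedge t,\omega)$ are exactly the right moves, and the handling of $\{\tau=\infty\}$ is sound.

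One small completeness point: the definition of $\mathcal{F}_\tau$ requires membership in $\mathcal{F}_\infty$ \emph{in addition} to the condition $A\cap\{\tau\le t\}\in\mathcal{F}_t$ for all $t\in\mathcal{T}$, so strictly speaking you should also verify $\{F_\tau\in B\}\in\mathcal{F}_\infty$. This does not follow automatically from the slicing condition alone when $\tau$ can take the value $\infty$, since nothing there constrains $A\cap\{\tau=\infty\}$. It does follow immediately, though, from the other half of the progressivity assumption: $F$ is (globally) $\mathcal{B}(\overline{\mathcal{T}})\otimes\mathcal{F}_\infty$-measurable and $\tau$ is $\mathcal{F}_\infty$-measurable, so $\omega\mapsto(\tau(\omega),\omega)$ is $\mathcal{F}_\infty/\mathcal{B}(\overline{\mathcal{T}})\otimes\mathcal{F}_\infty$-measurable and hence $F_\tau$ is $\mathcal{F}_\infty$-measurable. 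Adding this one sentence closes the only gap.
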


For a general stopped QoI, $F_\tau$,  the following (uncentered) UQ bounds immediately follow from Theorems  \ref{thm:obs_adap_info_ineq} and  \ref{thm:goal_info_ineq3}  with the choices $\mathcal{G}=\sigma(F_\tau)$ or $\mathcal{G}=\mathcal{F}_\tau$:
\begin{corollary}\label{cor:targeted_info_stopped}
Suppose $F_\tau\in L^1(\widetilde P)$.  Then 
\begin{align}\label{eq:goal_info_ineq2_stopped}
\pm E_{\widetilde P}[F_\tau]\leq& \inf_{c>0}\left\{\frac{1}{c} \Lambda_{P}^{F_\tau}(\pm c)+\frac{1}{c}R((F_\tau)_*\widetilde P\|(F_{\tau})_*P)\right\}\\
\leq&\inf_{c>0}\left\{\frac{1}{c} \Lambda_{P}^{F_\tau}(\pm c)+\frac{1}{c}R(\widetilde P|_{\mathcal{F}_\tau}\|P|_{\mathcal{F}_\tau})\right\}.\notag
\end{align}
If $R(\widetilde P|_{\mathcal{F}_\tau}\|P|_{\mathcal{F}_\tau})\leq E_{\widetilde{P}}[G]$ for some $\mathcal{F}_{\tau}$-measurable real-valued $G\in L^1(\widetilde{P})$ then
\begin{align}\label{eq:goal_info_ineq3_stopped}
\pm E_{\widetilde{P}}[F_\tau]\leq\inf_{c>0}\left\{\frac{1}{c} \log E_P\left[\exp\left(\pm c F_\tau+G\right)\right]\right\}.
\end{align}
The objective functions in \eqref{eq:goal_info_ineq2_stopped} and \eqref{eq:goal_info_ineq3_stopped} are quasiconvex
\end{corollary}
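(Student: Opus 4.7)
The plan is to read this off directly from the two main theorems by choosing the sub sigma-algebra $\mathcal{G}$ appropriately, so there is no new analytic work beyond assembling the right ingredients.

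First I would invoke Lemma \ref{lemma:F_tau} to note that $F_\tau$ is $\mathcal{F}_\tau$-measurable, hence also $\sigma(F_\tau)$-measurable, and that $\sigma(F_\tau)\subset\mathcal{F}_\tau\subset\mathcal{F}_\infty$. The integrability hypothesis $F_\tau\in L^1(\widetilde{P})$ means Theorem \ref{thm:obs_adap_info_ineq} applies with either choice of $\mathcal{G}$.

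Next, for the first inequality in \eqref{eq:goal_info_ineq2_stopped}, I would apply part 1 of Theorem \ref{thm:obs_adap_info_ineq} with $\mathcal{G}=\sigma(F_\tau)$. The remaining task is to identify the restricted relative entropy: since $\sigma(F_\tau)$ is exactly the sigma-algebra generated by the measurable map $F_\tau$, a standard change-of-variables (as noted in the discussion preceding the corollary statement) gives
\[
R(\widetilde{P}|_{\sigma(F_\tau)}\|P|_{\sigma(F_\tau)})=R((F_\tau)_*\widetilde{P}\|(F_\tau)_*P),
\]
which yields the first line. The second line in \eqref{eq:goal_info_ineq2_stopped} then follows immediately from part 2 of Theorem \ref{thm:obs_adap_info_ineq} (the data-processing inequality) applied to the inclusion $\sigma(F_\tau)\subset\mathcal{F}_\tau$; the infimum over $c>0$ is monotone in the relative-entropy term, so the bound persists after taking the infimum.

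For \eqref{eq:goal_info_ineq3_stopped}, I would apply Theorem \ref{thm:goal_info_ineq3} with $\mathcal{G}=\mathcal{F}_\tau$: $F_\tau$ is $\mathcal{F}_\tau$-measurable by Lemma \ref{lemma:F_tau}, the hypothesized $G$ is $\mathcal{F}_\tau$-measurable and in $L^1(\widetilde{P})$, and the relative entropy bound $R(\widetilde{P}|_{\mathcal{F}_\tau}\|P|_{\mathcal{F}_\tau})\leq E_{\widetilde{P}}[G]$ is exactly the hypothesis of that theorem. Substituting gives the stated bound verbatim.

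Finally, quasiconvexity of both objective functions is immediate: the objective in \eqref{eq:goal_info_ineq2_stopped} is quasiconvex by Theorem \ref{thm:obs_adap_info_ineq}, and the objective in \eqref{eq:goal_info_ineq3_stopped} is quasiconvex by Theorem \ref{thm:goal_info_ineq3}. Since essentially every piece has already been proved, I do not anticipate any real obstacle; the only minor bookkeeping is confirming the pushforward identification $R(\widetilde{P}|_{\sigma(F_\tau)}\|P|_{\sigma(F_\tau)})=R((F_\tau)_*\widetilde{P}\|(F_\tau)_*P)$, which the excerpt already flags in the paragraph before the corollary.
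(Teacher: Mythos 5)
Your proposal matches the paper's proof exactly: the paper states that the corollary follows immediately from Theorems \ref{thm:obs_adap_info_ineq} and \ref{thm:goal_info_ineq3} with $\mathcal{G}=\sigma(F_\tau)$ or $\mathcal{G}=\mathcal{F}_\tau$, which is precisely the assembly you carry out, including the pushforward identification $R(\widetilde{P}|_{\sigma(F_\tau)}\|P|_{\sigma(F_\tau)})=R((F_\tau)_*\widetilde{P}\|(F_\tau)_*P)$ and the data-processing step. No gap or divergence from the paper's route.
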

\begin{remark}
 This result shows that for stopped QoIs, $R( \widetilde{P}|_{\mathcal{F}_\tau}\|P|_{\mathcal{F}_\tau})$ is an appropriate goal-oriented relative entropy.  One can similarly obtain  centered variants of \eqref{eq:goal_info_ineq2_stopped} and  \eqref{eq:goal_info_ineq3_stopped} by working with $F_\tau-E_P[F_\tau]$ (we omit the details). As in Theorems \ref{thm:obs_adap_info_ineq} and \ref{thm:goal_info_ineq3}, one can change variables $\eta=1/c$ to obtain convex objective functions.
\end{remark}

Corollary \ref{cor:targeted_info_stopped} provides general-purpose UQ bounds for path-space QoIs up to a stopping time. In the following subsections, we discuss the nuances involved in deriving UQ bounds for several of the classes of  QoIs from Table \ref{tab:QoIs}.   In some cases we  directly apply  Corollary \ref{cor:targeted_info_stopped}, while others benefit from a slightly altered approach. Not all of our examples in  Section \ref{sec:examples} below will fit neatly into one of these  categories, but rather, they illustrate several important use-cases and strategies.

\subsection{Expectation of a Stopping Time}\label{subsec:exp_stop_time}
Here we consider the expectation of a stopping time, $\tau$.  For $\tau\in L^1(P)\cap L^1(\widetilde{P})$, (the centered variant of) \req{eq:goal_info_ineq2_stopped} in Corollary \ref{cor:targeted_info_stopped}  applied to $F_t=t$ yields
\begin{align}
\pm(E_{\widetilde{P}}[\tau]-E_P[\tau])\leq \inf_{c>0}\left\{\frac{1}{c}\Lambda_P^{\widehat{\tau}}(\pm c)+\frac{1}{c}R(\widetilde{P}|_{\mathcal{F}_\tau}\|P|_{\mathcal{F}_\tau})\right\}.
\end{align}
For unbounded $\tau\in L^1(P)\cap L^1(\widetilde{P})$, it is often useful to use Lemma \ref{lemma:lim_rel_ent} from Appendix \ref{sec:rel_ent} to obtain:
\begin{align}\label{eq:E_tau_bound0}
\pm( E_{\widetilde{P}}[\tau]-E_P[\tau])\leq& \inf_{c>0} \left\{\frac{1}{c}\Lambda_P^{\widehat{\tau}}(\pm c)+\frac{1}{c}\liminf_{n\to\infty}R(\widetilde{P}|_{\mathcal{F}_{\tau\wedge n}}\|P|_{\mathcal{F}_{\tau\wedge n}})\right\}.
\end{align}
To obtain a computable bound from \req{eq:E_tau_bound0}, first note that a relative entropy bound of the following form often follows from Girsanov's theorem (see Appendix \ref{sec:markov_rel_ent}):
\begin{align}\label{eq:rel_ent_lin_bound}
R(\widetilde P|_{\mathcal{F}_{\tau\wedge n}}\|P|_{\mathcal{F}_{\tau\wedge n}})\leq \eta_0+K E_{\widetilde{P}}[\tau\wedge n]
\end{align}
for some $\eta_0,K\in[0,\infty)$ and all $n\in\mathbb{Z}^+$. One can then use Theorem \ref{thm:goal_info_ineq3} with $F=\tau\wedge n$ and $G=\eta_0+K(\tau\wedge n)$. Taking $n\to\infty$ via the monotone and dominated convergence theorems and then  reparameterizing the infimum results in the following:
\begin{corollary}\label{cor:E_tau_bound}
Assume the relative entropy satisfies a bound of the form \req{eq:rel_ent_lin_bound} for all $n\in\mathbb{Z}^+$. Then
\begin{align}\label{E_tau_bound}
& -\inf_{c>0}\left\{(c+K)^{-1}\left(\Lambda_{P}^{\tau}(- c)+ \eta_0\right)\right\} \leq E_{\widetilde{P}}[\tau] \leq\inf_{\lambda>K}\left\{ (\lambda-K)^{-1}\left(\Lambda_{P}^{\tau}( \lambda)+ \eta_0\right)\right\}.
\end{align}
\end{corollary}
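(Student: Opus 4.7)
The plan is to apply Theorem~\ref{thm:goal_info_ineq3} at the truncated stopping time $\tau \wedge n$, pass to the limit $n \to \infty$ by monotone convergence, and then reparametrize the infimum to match the form claimed in \eqref{E_tau_bound}. For each $n \in \mathbb{Z}^+$, I take $F = \tau \wedge n$ and $G_n = \eta_0 + K(\tau \wedge n)$. Both quantities are $\mathcal{F}_{\tau \wedge n}$-measurable (by Lemma~\ref{lemma:F_tau} applied to $F_t = t$) and bounded by $n$-dependent constants, hence lie in $L^1(\widetilde{P})$; the hypothesis \eqref{eq:rel_ent_lin_bound} is precisely the statement $R(\widetilde{P}|_{\mathcal{F}_{\tau \wedge n}} \| P|_{\mathcal{F}_{\tau \wedge n}}) \leq E_{\widetilde{P}}[G_n]$ required to invoke Theorem~\ref{thm:goal_info_ineq3}.

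For the upper bound, Theorem~\ref{thm:goal_info_ineq3} with the $+$ sign yields
\begin{align*}
E_{\widetilde{P}}[\tau \wedge n] \leq \inf_{c>0}\left\{\frac{1}{c}\bigl(\eta_0 + \log E_P[\exp((c+K)(\tau \wedge n))]\bigr)\right\}.
\end{align*}
The change of variable $\lambda = c + K$ recasts the right-hand side as an infimum over $\lambda > K$. For each fixed such $\lambda$, the specialized bound $E_{\widetilde{P}}[\tau \wedge n] \leq (\lambda - K)^{-1}(\eta_0 + \log E_P[\exp(\lambda(\tau \wedge n))])$ holds for all $n$. Monotone convergence applied to the non-decreasing sequences $\tau \wedge n$ and $\exp(\lambda(\tau \wedge n))$ lets me send $n \to \infty$ on both sides, producing $E_{\widetilde{P}}[\tau] \leq (\lambda - K)^{-1}(\eta_0 + \Lambda_P^\tau(\lambda))$ for each $\lambda > K$; taking the infimum over $\lambda > K$ gives the upper bound in \eqref{E_tau_bound}.

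The lower bound follows the same template with the $-$ sign, yielding $-E_{\widetilde{P}}[\tau \wedge n] \leq \inf_{c>0}\{c^{-1}(\eta_0 + \log E_P[\exp((K-c)(\tau \wedge n))])\}$. The main subtlety occurs here: to match the claimed form I restrict the infimum to $c > K$ before substituting $c' = c - K$. This restriction only weakens the finite-$n$ bound, but it is forced because $E_P[\exp((K-c)\tau)]$ can be infinite when $c < K$, rendering those parameter values useless in the limit. After the substitution one has $-E_{\widetilde{P}}[\tau \wedge n] \leq \inf_{c'>0}(c'+K)^{-1}(\eta_0 + \log E_P[\exp(-c'(\tau \wedge n))])$. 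For each fixed $c' > 0$, dominated convergence with dominating function $1$ gives $\log E_P[\exp(-c'(\tau \wedge n))] \to \Lambda_P^\tau(-c')$, while monotone convergence gives $E_{\widetilde{P}}[\tau \wedge n] \to E_{\widetilde{P}}[\tau]$. Passing to the limit at each fixed $c'$ and then taking the infimum over $c' > 0$ completes the lower bound.
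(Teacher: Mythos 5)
Your proof is correct and follows precisely the route the paper itself indicates just before the corollary: apply Theorem~\ref{thm:goal_info_ineq3} with $F=\tau\wedge n$ and $G=\eta_0+K(\tau\wedge n)$, pass to the limit $n\to\infty$ by monotone and dominated convergence, and reparametrize the infimum. Your handling of the one genuine subtlety --- restricting to $c>K$ before substituting in the lower bound, justified because the finite-$n$ infimum is only weakened and the discarded range would be useless in the limit --- is exactly the reparametrization the paper leaves implicit.
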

Again, note that the upper and lower bounds only involve the tractable model, $P$. We illustrate this technique in a concrete example in Appendix \ref{app:BM_Etau}.

\subsection{Time-Integral/Discounted  QoIs}\label{sec:UQ_time_integral}
Here we consider exponentially discounted QoIs.  In the language of Assumption \ref{assump:time_dep_QoI}, given a progressive process $f_s$ and some $\lambda>0$ we define
\begin{align}\label{eq_discounted}
F_t=\int_0^t f_s \lambda e^{-\lambda s}ds
\end{align}
 and will consider the stopped process $F_\tau$,  with stopping-time $\tau=\infty$.  Such QoIs are often used in control theory (see page 38 in \cite{kushner2013numerical}) and economics (see page 64 in \cite{page2013applications} and  page 147 in \cite{peter2014uncertainty}). Here we derive an alternative UQ bound to Corollary \ref{cor:targeted_info_stopped} that is applicable to such QoIs:
\begin{theorem}\label{thm:UQ_integral_QoI}
Let $\lambda>0$ and define $\pi(ds)= \lambda e^{-\lambda s}ds$. Suppose  $f:[0,\infty)\times\Omega\to\overline{\mathbb{R}}$ is progressively measurable and
\begin{align}\label{eq:discounted_tilde_P_assump}
\text{  either }f_s\geq 0\,\,\,\text{  for $\pi$-a.e. $s$ or  } \,\,\,E_{\widetilde{P}}\left[\int_0^\infty |f_s|d\pi\right]<\infty.
\end{align}
Then
\begin{align}\label{eq:UQ_integral_bound}
&\pm E_{\widetilde{P}}\left[\int_0^\infty f_s \lambda e^{-\lambda s}ds\right]\\
\leq&\int_0^\infty\inf_{c>0}\left\{\frac{1}{c}\Lambda_P^{{f_{s}}}(\pm c)+\frac{1}{c} R(\widetilde{P}|_{\mathcal{F}_{s}}\|P|_{\mathcal{F}_{s}})\right\} \lambda e^{-\lambda s}ds\notag\\
\leq&\inf_{c>0}\left\{\int_0^\infty\frac{1}{c}\Lambda_P^{{ f_{s}}}(\pm c)\lambda e^{-\lambda s}ds+\frac{1}{c} \int_{0}^\infty R(\widetilde{P}|_{\mathcal{F}_{s}}\|P|_{\mathcal{F}_{s}}) \lambda e^{-\lambda s}ds\right\}.\notag
\end{align}
\end{theorem}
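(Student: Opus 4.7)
The plan is to first use Fubini/Tonelli to swap the $\widetilde{P}$-expectation with the integral against the measure $\pi(ds) = \lambda e^{-\lambda s}ds$, then to apply the goal-oriented information inequality (Theorem \ref{thm:obs_adap_info_ineq}) pointwise in $s$ using the sub sigma-algebra $\mathcal{G} = \mathcal{F}_s$, and finally to pull the infimum outside the integral via the elementary bound $\int \inf \leq \inf \int$.

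More concretely, I would first use the hypothesis \eqref{eq:discounted_tilde_P_assump} to justify the identity
\begin{align*}
E_{\widetilde{P}}\left[\int_0^\infty f_s \lambda e^{-\lambda s}ds\right] = \int_0^\infty E_{\widetilde{P}}[f_s] \lambda e^{-\lambda s}ds,
\end{align*}
where the left-hand integrand is progressively measurable and hence jointly measurable in $(s,\omega)$; Tonelli's theorem handles the nonnegative case (both sides possibly $+\infty$) and Fubini's theorem handles the absolutely integrable case. Joint measurability also ensures $s \mapsto E_{\widetilde{P}}[f_s]$ is Borel measurable, and similarly $s \mapsto E_P[e^{\pm c f_s}]$ is measurable for each fixed $c>0$, so the cumulant generating function $s \mapsto \Lambda_P^{f_s}(\pm c)$ is measurable.

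Next, for each $s \in [0,\infty)$, the random variable $f_s$ is $\mathcal{F}_s$-measurable by progressive measurability, so Theorem \ref{thm:obs_adap_info_ineq} applied with $\mathcal{G} = \mathcal{F}_s$ and $F = f_s$ yields
\begin{align*}
\pm E_{\widetilde{P}}[f_s] \leq \inf_{c>0}\left\{\frac{1}{c}\Lambda_P^{f_s}(\pm c) + \frac{1}{c} R(\widetilde{P}|_{\mathcal{F}_s}\|P|_{\mathcal{F}_s})\right\}
\end{align*}
(in $\overline{\mathbb{R}}$, using the convention $-\infty + \infty \equiv \infty$). Multiplying by $\lambda e^{-\lambda s}$ and integrating over $s \in [0,\infty)$, combined with the Fubini identity above, gives the first inequality of \eqref{eq:UQ_integral_bound}. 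The second inequality is then immediate: for any fixed $c_0 > 0$, the pointwise infimum over $c$ is no larger than the integrand evaluated at $c_0$, so integrating and then taking the infimum over $c_0$ yields $\int_0^\infty \inf_c [\,\cdots\,] d\pi \leq \inf_{c>0} \int_0^\infty [\,\cdots\,] d\pi$.

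The main technical care required is the measurability of the integrand $s \mapsto \inf_{c>0}\{\frac{1}{c}\Lambda_P^{f_s}(\pm c) + \frac{1}{c}R(\widetilde{P}|_{\mathcal{F}_s}\|P|_{\mathcal{F}_s})\}$, which is needed for the Lebesgue integral to make sense. Since the objective function is quasiconvex and lower semicontinuous in $c$ (by Theorem \ref{thm:obs_adap_info_ineq}), the infimum over $(0,\infty)$ equals the infimum over the countable dense set $c \in \mathbb{Q}_{>0}$, and each resulting function of $s$ is measurable; hence the infimum is measurable. The one further check is that $s \mapsto R(\widetilde{P}|_{\mathcal{F}_s}\|P|_{\mathcal{F}_s})$ is measurable, which follows because it is monotone nondecreasing in $s$ by the data-processing inequality \eqref{eq:data_proc}. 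Beyond this, the argument is largely bookkeeping; I expect no further obstacles.
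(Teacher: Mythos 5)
Your proof takes the same route as the paper's: Fubini/Tonelli to pass the integral inside, then Theorem~\ref{thm:obs_adap_info_ineq} applied pointwise with $\mathcal{G}=\mathcal{F}_s$, then $\int\inf\leq\inf\int$. The measurability remarks are a nice addition not present in the paper, though the claim that quasiconvexity plus lower semicontinuity alone gives $\inf_{c\in\mathbb{Q}_{>0}}=\inf_{c>0}$ is not quite right in general (a quasiconvex LSC function can be finite only at a single irrational point); here it works because the effective domain of $c\mapsto\Lambda_P^{f_s}(\pm c)$ is always an interval containing $0$, so the objective is either $+\infty$ on all of $(0,\infty)$ or finite on an interval with nonempty interior.

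There is, however, a small but genuine gap in the nonnegative branch. After invoking Tonelli to obtain $\int_0^\infty E_{\widetilde P}[f_s]\,\pi(ds)$, you apply Theorem~\ref{thm:obs_adap_info_ineq} ``with $F=f_s$''. That theorem is stated for $F\in L^1(\widetilde P)$, but when $f_s\geq 0$ nothing guarantees $E_{\widetilde P}[f_s]<\infty$ for $\pi$-a.e.~$s$; the set where $E_{\widetilde P}[f_s]=+\infty$ can have positive $\pi$-measure. The paper avoids this by first running the $L^1$ argument on the bounded truncations $f_s^N\equiv f_s 1_{f_s\leq N}+N1_{f_s>N}$, and then letting $N\to\infty$ via monotone and dominated convergence. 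Your version can be repaired either by doing the same truncation, or by noting directly (via the Gibbs variational principle, $\Lambda_P^{f_s}(c)\geq c\,E_{\widetilde P}[f_s]-R(\widetilde P|_{\mathcal{F}_s}\|P|_{\mathcal{F}_s})$) that whenever $E_{\widetilde P}[f_s]=+\infty$ and $R(\widetilde P|_{\mathcal{F}_s}\|P|_{\mathcal{F}_s})<\infty$, one has $\Lambda_P^{f_s}(c)=+\infty$ for all $c>0$, so the pointwise inequality is trivially $+\infty\leq+\infty$ (and the lower-signs case is trivially $-\infty\leq\cdot$). Either fix is short, so the approach is sound; it just needs this extra sentence.
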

\begin{remark}
Theorem \ref{thm:UQ_integral_QoI} can be generalized to other discounting measures, $\pi$, and other stopping times; see Appendix \ref{app:discounted}.
\end{remark}
\begin{proof}

First suppose  $E_{\widetilde{P}}\left[\int_0^\infty |f_s|d\pi\right]<\infty$. This allows us to use Fubini's theorem to compute
\begin{align}
& E_{\widetilde{P}}\left[\int_0^\infty f_s \lambda e^{-\lambda s}ds\right]=\int_{0}^\infty E_{\widetilde{P}}\left[ f_s\right]\pi(ds)
\end{align}
and implies $E_{\widetilde{P}}\left[ f_s\right]$ is finite for $\pi$-a.e. $s$. For each $s$, $f_{s}$ is $\mathcal{F}_{ s}$-measurable, hence Theorem \ref{thm:obs_adap_info_ineq} implies
\begin{align}
\pm E_{\widetilde{P}}\left[ f_{s}\right]\leq & \inf_{c>0}\left\{\frac{1}{c}\Lambda_P^{{  f_{s}}}(\pm c)+\frac{1}{c}R(\widetilde{P}|_{\mathcal{F}_{s }}\|P|_{\mathcal{F}_{s }})\right\}
\end{align}
for $\pi$-a.e. $s$. Integrating over $s$ gives the first inequality in \req{eq:UQ_integral_bound}.  The second follows by pulling the  infimum outside of the integral.

If   $f_s\geq 0$ then repeat the above calculations for $f_s^N\equiv f_s1_{f_s\leq N}+N1_{f_s>N}$ and take $N\to\infty$, using the dominated and monotone convergence theorems.
\end{proof}
 The last line in \req{eq:UQ_integral_bound} consists of two terms, a discounted moment generating function and a discounted relative entropy,
\begin{align}\label{eq:exp_discount_eta}
D\equiv\int_0^\infty R(\widetilde{P}|_{\mathcal{F}_{s}}\|P|_{\mathcal{F}_{s}})\lambda e^{-\lambda s}ds.
 \end{align}
 \req{eq:exp_discount_eta}  is the same information-theoretic quantity  that was defined in \cite{peter2014uncertainty} (see page 147), where it was proposed as a measure of model uncertainty for control problems. Theorem \ref{thm:UQ_integral_QoI} provides a rigorous justification for its use in UQ for exponentially discounted QoIs.

  \begin{remark}\label{remark:integral_QoI}
$\pi(dx)=\lambda e^{-\lambda s}ds$ is a probability measure, and so one one could alternatively apply Theorem \ref{thm:obs_adap_info_ineq} to the product measures $\pi\times P$ and $\pi\times \widetilde{P}$. However,  Jensen's inequality implies that the bound \req{eq:UQ_integral_bound} is tighter. Alternatively, one might  attempt to use Corollary \ref{cor:targeted_info_stopped} in place of Theorem \ref{thm:UQ_integral_QoI}, in which case one generally finds that the relative-entropy term, $R(\widetilde{P}|_{\mathcal{F}_\infty}\|{P}|_{\mathcal{F}_\infty})$,  is infinite and so the corresponding UQ bound is trivial and uninformative.  On the other hand, the bound \req{eq:UQ_integral_bound} is often nontrivial; see the example in Section \ref{example:control} below.
\end{remark}

\subsection{Time-Averages}\label{sec:time_avg}
Finally, we remark that  ergodic averages are a special case of  the current framework: Let $T>0$, $F_t=t^{-1}\int_0^{t} f_sds$, and $\tau=T$ (one can  similarly treat the discrete-time case).  $F_\tau$ is $\mathcal{F}_T$-measurable and so, if $F_\tau\in L^1(P)\cap L^1(\widetilde{P})$, one can apply   Corollary \ref{cor:targeted_info_stopped} to $\widehat{F}_\tau$.  Reparameterizing the infumum $c\to cT$, one finds that the bound is  the same as \req{eq:ergodic_avg}, which was previously derived in  \cite{DKPP} and used in \cite{doi:10.1137/19M1237429}. As we will demonstrate in the example in Section \ref{example:non-rev} below, one can often use Theorem \ref{thm:goal_info_ineq3} to obtain tighter and more general UQ bounds on ergodic averages than those obtained from \req{eq:ergodic_avg}.

\section{Examples}\label{sec:examples}
We now apply the goal-oriented UQ methods developed above to several examples:
\begin{enumerate}
\item Hitting times for perturbations of Brownian motion in Section \ref{sec:BM_const_drift}.
\item UQ bounds for invariant measures  of non-reversible SDEs in Section \ref{example:non-rev}.
\item Expected cost in stochastic control systems in Section \ref{example:control}.
\item Long-time behavior of semi-Markov perturbations of an $M/M/\infty$ queue in Section \ref{example:birth_death}.
\item Pricing of American put options with variable interest rate in Section \ref{sec:options}.
\end{enumerate}

\subsection{Perturbed Brownian Motion}\label{sec:BM_const_drift}
We first  illustrate the use of our method, and several of its features, with a simple example wherein many computations can be done explicitly and exactly; the example consists of  perturbations to Brownian-motion-with-constant-drift. Specifically, take  the baseline model to be the distribution on path-space, $C([0,\infty),\mathbb{R})$, of a $\mathbb{R}$-valued Wiener process  (Brownian motion), $W_t$, with constant drift, $\mu>0$, starting from $0$, i.e., the distribution of the following process:
\begin{flalign}
\text{\bf Baseline Model:} && X_t=\mu t+W_t&& &&
\end{flalign}
The alternative model will be the distribution of the solution to a SDE of the form:
\begin{flalign}\label{eq:BM_alt}
\text{\bf  Alternative Models:} && d\widetilde X_t=(\mu \!+\!\beta(t,\widetilde{X}_t,\widetilde{Y}_t,\widetilde{Z}))dt+d\widetilde{W}_t,\,\,\,\,\,\widetilde X_0=0, && &&
\end{flalign}
where $\widetilde{W}_t$ is also a Wiener process. We allow the drift perturbation, $\beta$, to depend on an additional $\mathbb{R}^k$-valued process, $\widetilde{Y}_t$,  and also on independent external data, $\widetilde{Z}$ (see Appendix \ref{sec:rel_ent_SDE} for further discussion of the type of perturbations we have in mind). More specific assumptions on $\beta$ will be stated later.  We note that the methods developed here do not allow one to perturb the diffusion, as the corresponding relative entropy is infinite due to lack of absolute continuity (see, e.g., page 80 in \cite{freidlin2016functional}).

In Section \ref{sec:BM_dist} we will study the cumulative distribution function of $\tau_a$, the level-$a$ hitting time: 
\begin{flalign}\label{eq:BM_QoI1}
\text{\bf  QoI 1:} && \widetilde{P}^0(\tau_a\leq T),\,\,\,T>0,\text{ where }\tau_a[y]\equiv\inf\{t: y_t=a\}. && 
\end{flalign}
Note that $\tau_a$ is a stopping time on path-space, $C([0,\infty),\mathbb{R})$. $\widetilde{P}^0$ denotes the distribution of $\widetilde{X}_t$ on path-space and similarly for $P^0$; the superscript $0$ indicates that the initial condition is $0$.

We will also  derive UQ bounds for the expected level-$a$ hitting time,
\begin{flalign}\label{eq:BM_QoI2}
\text{\bf  QoI 2:} &&E_{\widetilde{P}^0}[\tau_a],\,\,\,a>0. && 
\end{flalign}
This second QoI is simpler, as we can use the general result of Section \ref{subsec:exp_stop_time}. Therefore we relegate the details to  Appendix \ref{app:BM_Etau}.

\subsubsection{Hitting Time Distribution: Benefits of Being Goal-Oriented}\label{sec:BM_dist}

We use the QoI (\ref{eq:BM_QoI1}) to illustrate the benefits of using a goal-oriented relative entropy, versus a non-goal-oriented counterpart. Here we specialize to perturbed SDEs,  \req{eq:BM_alt}, whose distributions, $\widetilde{P}^0$, satisfy the following.
\begin{flalign}\label{eq:BM_Ambiguity1}
\text{\bf  Alternative Models:} && &\text{ Perturbations by drifts, $\beta$,  such that, for some $\alpha>0$:}&& && \\
&& &R(\widetilde{P}^0|_{\mathcal{F}_{\tau_a\wedge T}}\|{P}^0|_{\mathcal{F}_{\tau_a\wedge T}})\leq \frac{1}{2}\alpha^2E_{\widetilde{P}^0}[\tau_a\wedge T].&& &&\notag
\end{flalign}

For example, \req{eq:BM_Ambiguity1} holds if $\|\beta\|_\infty\leq\alpha$ (assuming that the  models  satisfy the assumptions required to obtain the relative entropy via Girsanov's theorem; see  Appendix \ref{sec:rel_ent_SDE}  for details).

  The QoI, $F\equiv1_{\tau_a\leq T}$, is $\mathcal{F}_{\tau_a\wedge T}$-measurable, and so $R(\widetilde{P}^0|_{\mathcal{F}_{\tau_a\wedge T}}\|\widetilde{P}^0|_{\mathcal{F}_{\tau_a\wedge T}})$ is an appropriate goal-oriented relative entropy; the bound \req{eq:BM_Ambiguity1} implies that we can take $\mathcal{G}=\mathcal{F}_{\tau_a\wedge T}$ and $G=\frac{1}{2}\alpha^2(\tau_a\wedge T)$ in Theorem \ref{thm:goal_info_ineq3}. This gives the following goal-oriented UQ bound:
\begin{align}\label{eq:BM_goal_oriented}
\pm \widetilde{P}^0(\tau_a\leq T)\leq \inf_{c>0}\left\{\frac{1}{c}\log E_{P^0}\left[\exp\left(\pm c1_{\tau_a\leq T}+\frac{1}{2}\alpha^2(\tau_a\wedge T)\right)\right]\right\}.
\end{align}

\begin{figure}
 \begin{minipage}[b]{0.5\linewidth}
\centerline{\includegraphics[height=6cm]{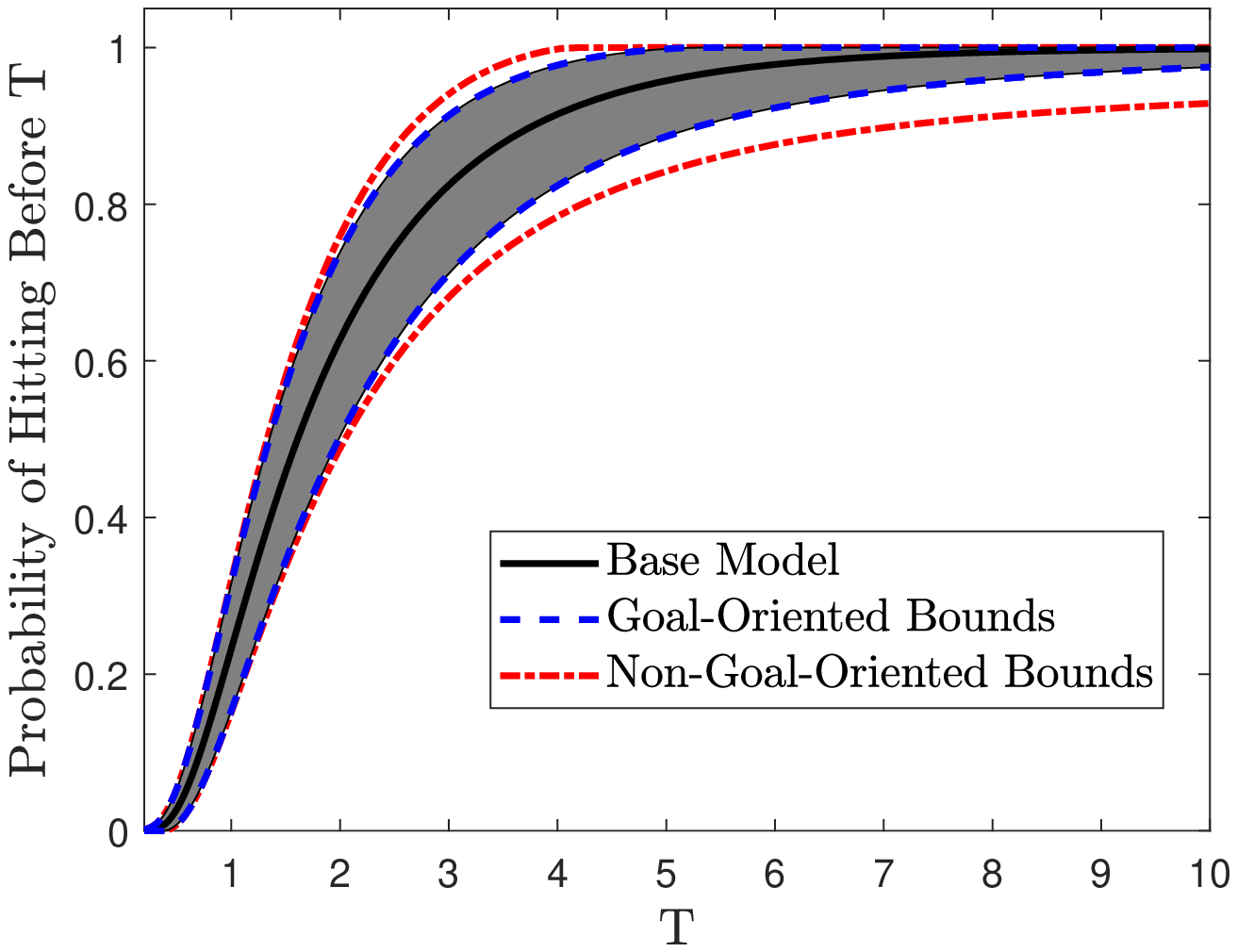}}
 \end{minipage}
 \begin{minipage}[b]{0.5\linewidth}
\centerline{\includegraphics[height=6cm]{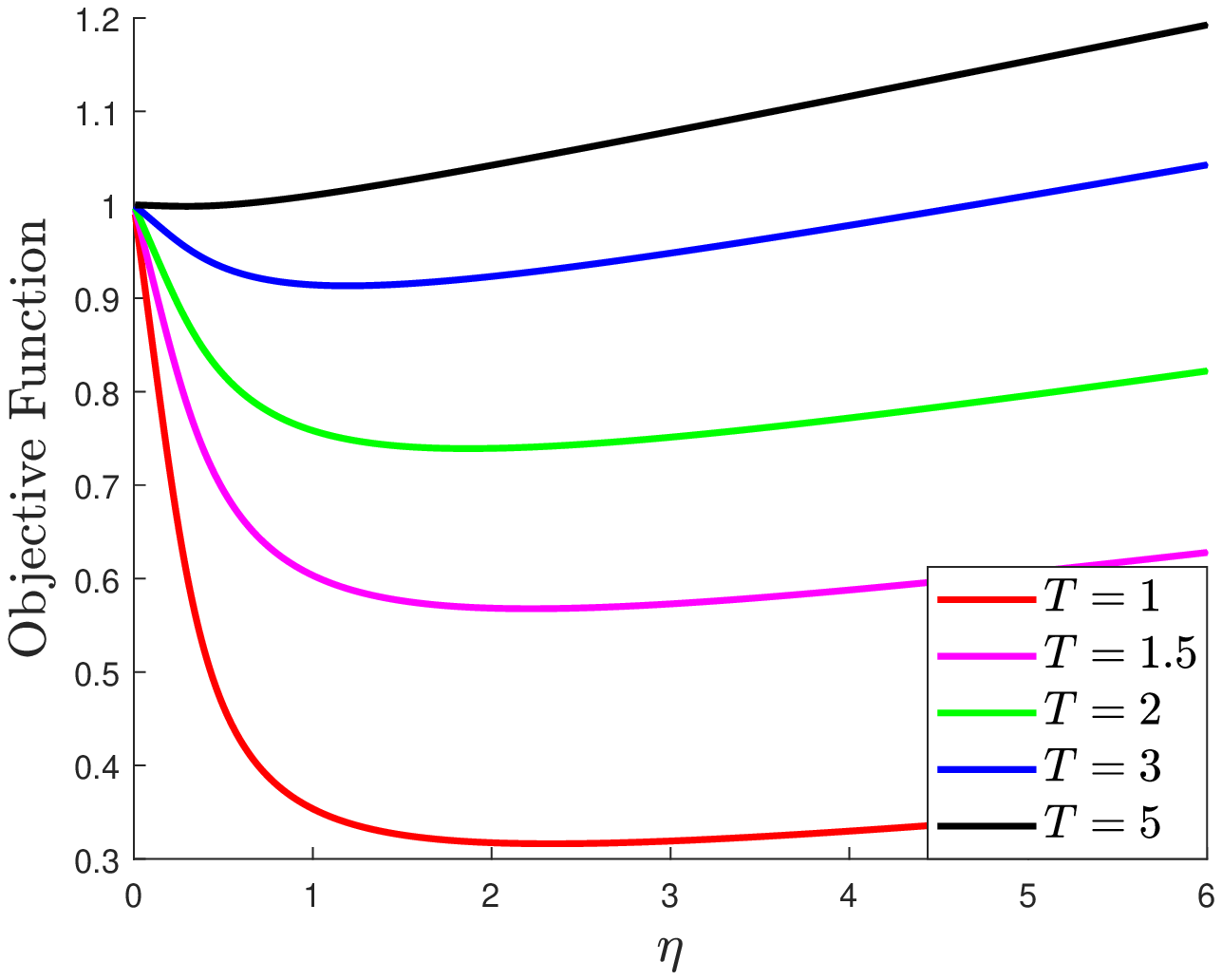}}
 \end{minipage}
\caption{Left: Bound on the CDF of $\tau_a$, the level-$a$ hitting time, for a perturbation of  Brownian-motion-with-constant-drift. The solid black line shows the distribution under the baseline model. The red curves (dot-dashed, \req{eq:BM_dist_non_goal}) are obtained from a non-goal-oriented relative entropy, while the tighter bounds, in blue (dashed, \req{eq:BM_dist_goal}), were obtained by our method with an appropriate goal-oriented relative entropy. Right: The (convex) objective function from \eqref{eq:goal_info_ineq3_convex} for the goal-oriented upper UQ bound (i.e., corresponding to the  upper blue dashed curve on the left). }\label{fig:bm_dist}
 \end{figure}

For comparison, we also compute the  non-goal-oriented bound that follows from the UQ information inequality \eqref{goal_oriented_bound}. The QoI is a function of the path up to time $T$, so we apply this proposition to  the relative entropy $R(\widetilde{P}^0_T\|P^0_T)$ (the subscript $T$ denotes taking the distribution on path-space $C([0,T],\mathbb{R})$).
\begin{remark}
One can  attempt to use the full distributions on $C([0,\infty),\mathbb{R})$ in \req{goal_oriented_bound}, but that only results in trivial bounds, as the maximum of  $R(\widetilde{P}^0\|P^0)$ over the collection of alternative models (\ref{eq:BM_Ambiguity1}) is infinite; this can be seen by taking a constant perturbation $\beta=\alpha$.
 \end{remark}
Repeating these computations under the assumption $\|\beta\|_\infty\leq\alpha$, but using the non-goal-oriented relative entropy  bound $R(\widetilde{P}|_{\mathcal{F}_T}\|P|_{\mathcal{F}_T})\leq \alpha^2T/2$ (once again using Girsanov's theorem),  we obtain the UQ bound
\begin{align}\label{eq:BM_dist_non_goal}
\pm \widetilde{P}^0(\tau_a\leq T)\leq&  \inf_{c>0}\left\{\frac{1}{c}\log E_{P^0}\left[\exp\left(\pm c1_{\tau_a\leq T}\right)\right]+\frac{1}{c}\frac{\alpha^2 T}{2}\right\}.
\end{align}
Note that the goal-oriented bound on the right hand side of \req{eq:BM_goal_oriented} is tighter than  the non-goal-oriented bound \eqref{eq:BM_dist_non_goal}, as can be seen by bounding $\frac{1}{2}\alpha^2(\tau_a\wedge T)\leq \frac{1}{2}\alpha^2  T$ in the exponent.

The $P^0$-expectations in \eqref{eq:BM_goal_oriented} and \eqref{eq:BM_dist_non_goal} can be computed using the known formula for the distribution of $\tau_a$ under the baseline model (see page 196 in \cite{karatzas2014brownian} and also Appendix \ref{app:BM_hitting_times}).  Assuming that $a>0$ and $\mu>0$, the goal-oriented bound is
\begin{align}\label{eq:BM_dist_goal}
&\pm \widetilde{P}^0(\tau_a\leq T)\leq \inf_{c>0}\left\{\frac{1}{c}\log\left[ \frac{a}{\sqrt{2\pi}} \int_0^\infty e^{\pm c1_{z\leq T}+\frac{1}{2}\alpha^2(z\wedge T)}e^{-(a-\mu z)^2/(2z)} z^{-3/2}dz\right]\right\}.
\end{align}
Similarly, one can  compute the non-goal-oriented bound, \req{eq:BM_dist_non_goal}, and the probability under the baseline model.

Figure \ref{fig:bm_dist} shows numerical results comparing these bounds, with parameter values $\mu=1$, $a=2$, $\alpha=0.2$;  we compute the integrals numerically using a quadrature method and, as discussed in Remark \ref{remark:unimodal}, we solve the 1-D optimization problems numerically.  The black curve shows the distribution of $\tau_a$ under the baseline model. The blue curves show the goal-oriented bounds \req{eq:BM_dist_goal}; these constrain $\widetilde{P}^0(\tau_a\leq T)$ to be within the gray region. The red curves show the non-goal-oriented bounds \eqref{eq:BM_dist_non_goal}; we  see that the non-goal-oriented bounds significantly overestimate the uncertainty, especially as $P^0(\tau_a\leq T)$ approaches $1$.

    \subsection{UQ for Time-Averages and Invariant Measures of Non-Reversible SDEs}\label{example:non-rev}

In this example we show how Theorem \ref{thm:goal_info_ineq3} can be combined with   the functional inequality methods from \cite{doi:10.1137/19M1237429} to obtain improved UQ bounds for invariant measures of non-reversible SDEs.  Specifically, consider the following class of SDEs on $\mathbb{R}^d$:
\begin{flalign}\label{eq:Langevin_baseline}
\text{\bf  Baseline Model:} &&dX_t=(-\nabla V( X_t) +a(X_t))dt +\sqrt{2}dW_t,\,\,\, X_0\sim \mu^*=\frac{1}{Z}e^{-V(x)}dx, && 
\end{flalign}
where the non-gradient portion of the drift, $a$, satisfies
\begin{align}\label{eq:a_div_condition}
\divergence(e^{-V}a)=0.
\end{align}
If $a=0$ then \req{eq:Langevin_baseline} is an overdamped Langevin equation with potential $V$.  In general,  \req{eq:Langevin_baseline} is non-reversible but \req{eq:a_div_condition} ensures that $\mu^*$ remains  the invariant measure for $a\neq 0$; such non-reversible terms are used for variance reduction in  Langevin samplers  \cite{Rey_Bellet_2015}.

The alternative model will have a general non-reversible drift:
\begin{flalign}\label{eq:Langevin_alt}
\text{\bf   Alternative Models:} &&d\widetilde{X}_t=(-\nabla  V(\widetilde{X}_t)+ b(\widetilde{X}_t)) dt +\sqrt{2}d\widetilde{W}_t,\,\,\,\widetilde X_0\sim \widetilde\mu. && &&
\end{flalign}
Denote their respective distributions on path-space, $C([0,\infty),\mathbb{R}^d)$, by $P$ and $\widetilde{P}$. We will obtain UQ bounds for ergodic averages of some observable $f$:
\begin{flalign}
\text{\bf    QoI:} &&F_T\equiv\frac{1}{T}\int_0^T f(x_t) dt. && 
\end{flalign}
\begin{remark}
While the invariant measure of the baseline process is known explicitly (up to a normalization factor; see \req{eq:Langevin_baseline}), the invariant measure of the alternative model (\ref{eq:Langevin_alt}) is {\em not} known for  general $b$; physically, such non-reversible drifts model external forces.   A general drift can be split into $-\nabla V$ and $b$ via a Helmholz decomposition (i.e., $b$ is the divergence-free component).  Such a decomposition can inform the choice of $V$ in the baseline model.
\end{remark}

Assuming that Girsanov's theorem applies  (see Appendix \ref{sec:rel_ent_SDE}), we have the following expression for the relative entropy
\begin{align}
R(\widetilde{P}|_{\mathcal{F}_T}\|P|_{\mathcal{F}_T})=E_{\widetilde{P}}[G_T],\,\,\,\, G_T\equiv R(\widetilde{\mu}\|\mu^*)+\frac{1}{4}\int_0^T\|b(x_t)-a(x_t)\|^2dt.
\end{align}
The quantity $G_T$ is $\mathcal{F}_T$-measurable and if $R(\widetilde{\mu}\|\mu^*)$ is finite and $b-a$ grows sufficiently slowly at infinity  then $G_T\in L^1(\widetilde{P})$ and hence Theorem \ref{thm:goal_info_ineq3} implies
\begin{align}\label{eq:imp_ergodic_UQ}
\pm E_{\widetilde{P}}[F_T]\leq \inf_{c>0}\left\{\frac{1}{cT}\log E_P[\exp(\pm c TF_T+G_T)]\right\},
\end{align}
where we re-indexed $c\to cT$.  

We can now use the functional inequality approach of \cite{doi:10.1137/19M1237429} to bound \req{eq:imp_ergodic_UQ}. First write
\begin{align}
&\pm c TF_T+G_T=\int_0^T h_c(x_t)dt\equiv TH_{c,T},\\
& h_c(x)\equiv R(\widetilde{\mu}\|\mu^*)/T\pm cf(x)+\frac{1}{4}\|b(x)-a(x)\|^2.\notag
\end{align}
Utilizing a connection with the Feynman Kac semigroup, the cumulant generating function in (\ref{eq:imp_ergodic_UQ}) can be bounded as follows  (see Theorem 1 in \cite{WU2000435} or Proposition 6 in  \cite{doi:10.1137/19M1237429}):
\begin{align}
&\frac{1}{T}\Lambda_{P}^{{H}_{c,T}}( T)\leq \kappa(h_c),\\
&\kappa(h_c)\equiv\sup\left\{\int g A[g] d\mu^*\!+\!\!\int h_c|g|^2d\mu^*:g\in D(A,\mathbb{R}),\,\|g\|_{L^2(\mu^*)}=1\right\},\notag
\end{align}
where $A$ denotes the generator of the baseline SDE (\ref{eq:Langevin_baseline}). Note that $\kappa$ depends only on the symmetric part of $A$, while the non-reversible component of the drift, $a$, only contributes to the antisymmetric part of $A$.  Therefore, it suffices to consider the reversible case $a=0$ when bounding $\kappa(h)$. (The precise form of $a$ will impact the quality of the final UQ bound; see Remark \ref{remark:a_param}).

  $\kappa(h_c)$ can be bounded using functional inequalities for $A$ (e.g., Poincar{\'e}, log-Sobolev, Liapunov functions).    For example, a log-Sobolev inequality for $A$, 
\begin{align}
\int g^2 \log(g^2)d\mu^{*}\leq -\beta\int gA[g]\,d\mu^{*}\, \,\text{ for all }\,g\in D(A,\mathbb{R}),
\end{align}
where $\beta>0$, implies
\begin{align}\label{eq:Lambda_log_sob}
\frac{1}{T}\Lambda_{P}^{{H}_{c,T}}( T)\leq \beta^{-1}\log\left(\int \exp(\beta h_c(x))\mu^*(dx)\right)
\end{align}
(see Corollary 4 in \cite{WU2000435} or Proposition 7 in  \cite{doi:10.1137/19M1237429}) and hence
\begin{align}\label{eq:imp_ergodic_UQ2}
\pm E_{\widetilde{P}}[F_T]\leq \inf_{c>0}\left\{\frac{1}{c\beta} \log\left(\int \exp(\beta h_c(x))\mu^*(dx)\right)\right\}.
\end{align}
If the alternative process is started in a known initial distribution, $\widetilde{\mu}$, for which $R(\widetilde{\mu}\|\mu^*)$ is finite and can be computed (or bounded) then the right-hand-side of \req{eq:imp_ergodic_UQ2} only involves the baseline process and other known quantities.  It therefore provides (in principle) computable UQ bounds on the expectations of  time-averages at finite-times.   Alternatively, by taking $T\to\infty$ the dependence on the initial distribution in \eqref{eq:imp_ergodic_UQ} disappears and we can obtain UQ bounds on the invariant measure of the alternative process.  In the remainder of this subsection we explore this second direction.

By starting the alternative process in an invariant measure $\widetilde{\mu}=\widetilde{\mu}^*$  and taking $T\to\infty$ in \req{eq:imp_ergodic_UQ2}  we obtain the following UQ bounds on the invariant measure of the alternative model:
\begin{align}\label{eq:non_rev_UQ}
&\pm\int f(x)\widetilde{\mu}^*(dx)\leq \inf_{c>0}\left\{\frac{1}{c\beta} \log\left(\int \exp\left(\pm c\beta f(x)+\frac{\beta}{4}\|b(x)-a(x)\|^2\right)\mu^*(dx)\right)\right\}.
\end{align}
Note that the  bounds (\ref{eq:non_rev_UQ}) only involve expectations under the baseline invariant measure, $\mu^*$, and are   finite for appropriate unbounded $f$ and $b$. The ability to treat unbounded perturbations is due to our use of Theorem \ref{thm:goal_info_ineq3}, as opposed to  \req{eq:ergodic_avg}, and constitutes  a significant improvement on the results in  \cite{DKPP,doi:10.1137/19M1237429}, which are  tractable only when $b-a$ is bounded.   In Figure \ref{fig:non_rev_UQ} we compare \req{eq:non_rev_UQ}  with the UQ bound \eqref{eq:ergodic_avg} from \cite{DKPP},  where in the latter we employ the relative entropy rate bound 
\begin{align}\label{eq:non_rev_rel_ent_bound}
H(\widetilde{P}\|P)\leq\|b-a\|^2_\infty/4,
\end{align}
i.e., we use the strategy described in Remark \ref{remark:old_strat}.  Even in the case where $b-a$ is bounded, the new result (\ref{eq:non_rev_UQ}) obtained using Theorem \ref{thm:goal_info_ineq3}  is tighter than the bounds obtained from the earlier result \eqref{eq:ergodic_avg}.

\begin{figure}
\centering \includegraphics[height=6cm]{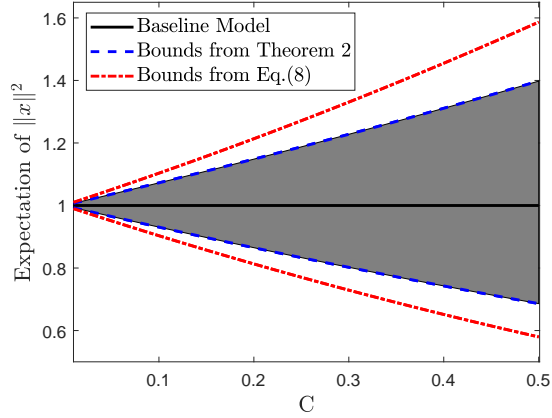}
 \caption{Baseline model: $V(x)=\|x\|^2$, $a=0$ ($d=2$);  $D^2V\geq 2I$ and so  a log-Sobolev inequality holds with $\beta=1$ (see \cite{BakryEmery}).   Perturbation: $b(x)=C(-\sin(x_2),\cos(x_1))$. Observable: $f(x)=\|x\|^2$. The baseline QoI, $\int fd\mu^*$, is shown in the horizontal black line.  The bounds developed in Theorem \ref{thm:goal_info_ineq3}  constrain the mean under the alternative model to the gray region (see  \req{eq:non_rev_UQ}).  These bounds are tighter than those obtained from \req{eq:ergodic_avg} (see  \cite{DKPP}) together with \req{eq:non_rev_rel_ent_bound} (shown in red).}\label{fig:non_rev_UQ}
 \end{figure}

\begin{remark}\label{remark:a_param}
The left hand side in \req{eq:non_rev_UQ} is independent of the choice of $a$ in the baseline model.  Therefore,  one can view $a$ in \req{eq:non_rev_UQ} as a parameter to be optimized over (minimizing over the set of $a$'s satisfying  \req{eq:a_div_condition}).
\end{remark}

\subsection{Discounted Quadratic Observable for Non-Gaussian Perturbations of a Gaussian Process}\label{example:control}
Here we consider a $\mathbb{R}^n$-valued progressively measurable process, $X_t$, that has a Gaussian distribution for every $t$:
\begin{flalign}\label{eq:gaussian_base}
\text{\bf     Baseline Model:} &&(X_t)_*P=N(\mu_t,\Sigma_t),\,\,\,t\geq 0. && &&
\end{flalign}
The mean and variance of the baseline model are considered to be known (i.e., obtained by some approximation or measurement).  We will compute UQ bounds on non-Gaussian perturbations of \eqref{eq:gaussian_base}. We assume  $\Sigma_t$ is (strictly) positive-definite and study the discounted QoI:
\begin{flalign} 
\text{\bf     QoI:} &&F\equiv\int_{\mathcal{T}} \left( \frac{1}{2}\widehat{X}_s^TC_s\widehat{X}_s+d_s^T \widehat{X}_s\right)\pi(ds), && 
\end{flalign}
where $C_s$ is a positive-definite $n\times n$-matrix-valued function, $d_s$ is $\mathbb{R}^n$-valued, and $\pi$ is a probability measure. Without loss of generality (i.e., neglecting an overall constant factor and after a redefinition of $d_s$), we have chosen to express $F$ in terms of the centered process $\widehat{X}_s=X_s-\mu_s$.  The techniques involved here are those of Section \ref{sec:UQ_time_integral}.

At this point, we are being purposefully vague about the nature of the alternative model and even the details of the baseline model, as a large portion of the following computation is independent of these details. We also emphasize that the alternative models do {\em not} need to be Gaussian.   In Section \ref{sec:discounted_control} we will make both base and alternative models concrete, as we study an application to control theory.

Assuming that the conditions of Theorem \ref{thm:UQ_integral_QoI2} of Appendix \ref{app:discounted} are met, we obtain the following UQ bound (note that the appropriate sub sigma-algebras are $\mathcal{G}=\mathcal{F}_s$):
\begin{align}\label{eq:discount_ex_2}
&\pm E_{\widetilde{P}}\left[F\right]\leq\int_{\mathcal{T}}\inf_{c>0}\left\{\frac{1}{c}\Lambda_P^{{ f_{s}}}(\pm c)+\frac{1}{c}R(\widetilde{P}|_{\mathcal{F}_s}\|P|_{\mathcal{F}_s})\right\}\pi(ds),\\
&f_s\equiv \frac{1}{2} \widehat{X}_s^TC_s\widehat{X}_s+d_s^T \widehat{X}_s.\notag
\end{align}

$X_t$ is Gaussian under $P$, so we can compute 
\begin{align}\label{eq:Lambda_gaussian}
&\Lambda_P^{{ f_{s}}}( c)=\log\left(\!((2\pi)^n|\det(\Sigma_s)|)^{-1/2}\!\! \int e^{cd^T_sz-z^T(\Sigma^{-1}_s-cC_s)z/2}dz\right),\\
&E_P[F]=\int_{\mathcal{T}} \frac{1}{2}\tr(\Sigma_sC_s)\pi(ds).\notag
\end{align}
For $c$ small enough, $\Sigma_s^{-1}-cC_s$ is positive-definite.  More specifically, $C_s$ is positive-definite, so we can compute a Cholesky factorization  $C_s=M_s^TM_s$ with $M_s$ nonsingular.  $\Sigma_s^{-1}-cC_s$ is positive-definite  if and only if $y^T (M_s^{-1})^T\Sigma_s^{-1} M_s^{-1}y>cy^Ty$ for all nonzero $y$, which is true if and only if $c<1/\|M_s\Sigma_sM_s^T\|$ ($\ell^2$-matrix norm).

For $c<1/\|M_s\Sigma_sM_s^T\|\equiv c^*_s$, the integral in \req{eq:Lambda_gaussian} can be computed in terms of the moment generating function of a Gaussian with covariance $(\Sigma^{-1}_s-cC_s)^{-1}$, resulting in the UQ bound
\begin{align}\label{eq:Gaussian_UQ}
&-\!\int_{\mathcal{T}}\inf_{c>0}\bigg\{\!\frac{1}{c}\Lambda_P^{f_s}(-c)+\frac{\eta_s}{c}\bigg\}\pi(ds)\leq E_{\widetilde{P}}\left[F\right]
\leq\int_{\mathcal{T}}\inf_{0<c<c^*_s}\bigg\{\!\frac{1}{c}\Lambda_P^{f_s}(c)+\frac{\eta_s}{c}\bigg\}\pi(ds),
\end{align}
where  $\eta_s\equiv R(\widetilde{P}|_{\mathcal{F}_s}\|P|_{\mathcal{F}_s})$ and for the above ranges of $c$ we have
\begin{align}
&\Lambda_P^{{ f_{s}}}( \pm c)=-\frac{1}{2}\log\left(|\det\left(I\mp c\Sigma_sC_s\right)|\right)+\frac{1}{2}c^2d_s^T(I\mp c\Sigma_sC_s)^{-1}\Sigma_sd_s.\notag
\end{align}

To make these bounds computable, one needs a bound on the relative entropy; this requires more specificity regarding the nature of the family of alternative models. We study an application to control theory in the following subsection.

\subsubsection{Linear-Quadratic Stochastic Control}\label{sec:discounted_control}
Here, \req{eq:Gaussian_UQ} will be used to study robustness for linear-quadratic stochastic control (robustness under nonlinear perturbations).  Specifically, suppose one is interested in controlling some nonlinear system,
\begin{align}\label{eq:nonlin_control}
d\widetilde{X}_t=(B\widetilde{X}_t+\sigma\beta(t,\widetilde{X}_t)+Du_t)dt+\sigma dW_t,\,\,\, \widetilde{X}_0\sim N(0,\Sigma_0),
\end{align}
where  $B,D,\sigma$ are $n\times n$ matrices, and $W$ is a $\mathbb{R}^n$-valued Wiener process. We write the non-linear term with an explicit factor of $\sigma$ to simplify the use of Girsanov's theorem later on. The control variable is denoted by $u_t$ and $\widetilde{X}_t$ is the state; we take $\widetilde{X}_t$ to be an observable quantity  that can be used for feedback.

 The perturbation $\beta$ may be unknown and even when it is known, optimal control of \req{eq:nonlin_control} is a difficult problem, both analytically and numerically \cite{kushner2013numerical}. Therefore, one option is to consider the linear approximation
\begin{align}\label{eq:lin_control}
dX_t=(BX_t+Du_t)dt+\sigma dW_t, \,\,\, {X}_0\sim N(0,\Sigma_0),
\end{align}
obtain an explicit formula for the optimal feedback control (under a cost functional to be specified below) for \req{eq:lin_control}, and use that same feedback function to (suboptimally) control the original system \req{eq:nonlin_control}.  To evaluate the performance, one must bound the cost functional when the control for the linearized system is used on the nonlinear system.

To make the above precise, we must first specify the cost functional, which we take to be an exponentially discounted quadratic cost:
\begin{align}\label{eq:quadratic_cost}
\mathcal{C}\equiv E\left[\int_0^\infty \frac{1}{2}\left( {X}_s^TQX_s+u_s^TRu_s\right)\lambda e^{-\lambda s}ds\right],
\end{align}
(and similarly with $X_t$ replaced by $\widetilde{X}_t$ for the nonlinear system). Here, $Q$ and $R$ are positive-definite $n\times n$-matrices.

 The optimal feedback control for the the linear system \req{eq:lin_control} with cost \req{eq:quadratic_cost} is  given by $u_t=K_\lambda X_t$ where $K_\lambda$ is obtained as follows (see \cite{anderson2007optimal,BIJL2019471} for details): Define $B_\lambda\equiv B-\frac{\lambda }{2}I$ and solve   the following algebraic Riccati equation for $Y_\lambda$:
 \begin{align}
 B_\lambda^T Y_\lambda+Y_\lambda B_\lambda+Q-Y_\lambda DR^{-1}D^TY_\lambda=0.
 \end{align}
 The optimal control is then
 \begin{align}\label{eq:optimal_control}
 u_t=K_\lambda X_t,\,\,\, K_\lambda\equiv-R^{-1}D^TY_\lambda.
 \end{align}
 
Having obtained \req{eq:optimal_control}, we can finally fit the above problem into our UQ framework:
\begin{flalign} 
\text{\bf    Baseline Model:} && &dX_t=\!A_\lambda X_tdt+\sigma dW_t,\,A_\lambda\!\equiv B\!+\!DK_\lambda,\,X_0\!\sim\! N(0,\Sigma_0), && \label{eq:control_SDE}\\
\text{\bf     Alternative Models:} && &d\widetilde{X}_t=(A_\lambda\widetilde{X}_t+\sigma\beta(t,\widetilde{X}_t))dt+\sigma dW_t,\,\widetilde{X}_0\sim N(0,\Sigma_0), &&\label{eq:Control_Ambiguity}\\
\text{\bf     QoI:} && &F\equiv\int_0^\infty \frac{1}{2}x_s^TC_\lambda x_s\lambda e^{-\lambda s}ds,\,\,\, C_\lambda\equiv Q+K_\lambda^TRK_\lambda, &&\label{eq:control_QoI}\\
&& & \text{where $x\in C([0,\infty),\mathbb{R}^n)$ is the path of the state variable.}&&\notag
\end{flalign}
Note that, by Girsanov's theorem, the collection of alternative models contains all bounded-drift-perturbations of the baseline model with $\|\beta\|_\infty\leq\alpha$.

The UQ bounds \eqref{eq:Gaussian_UQ} applied to the above problem are  bounds on the cost of controlling the nonlinear system \eqref{eq:nonlin_control} by using the  feedback control function that was derived to optimally control the linear system \eqref{eq:lin_control}. We can make the bounds explicit as follows:  \req{eq:control_SDE} has the solution
\begin{align}
X_t=e^{tA_\lambda}\left(X_{0}+\int_0^te^{-sA_\lambda}\sigma dW_s\right).
\end{align}
  In particular, $X_t$ is Gaussian with mean $0$ and covariance
\begin{align}
\Sigma_t^{\alpha\beta}=(e^{tA_\lambda})^\alpha_i (e^{tA_\lambda})^\beta_j \Sigma^{ij}_0
+(e^{tA_\lambda})^\alpha_i(e^{tA_\lambda})^\beta_j\int_0^t (e^{-sA_\lambda}\sigma)^i_k\delta^{k\ell}(e^{-sA_\lambda}\sigma)^{j}_\ell ds.
\end{align}
\begin{figure}
\centerline{\includegraphics[height=6cm]{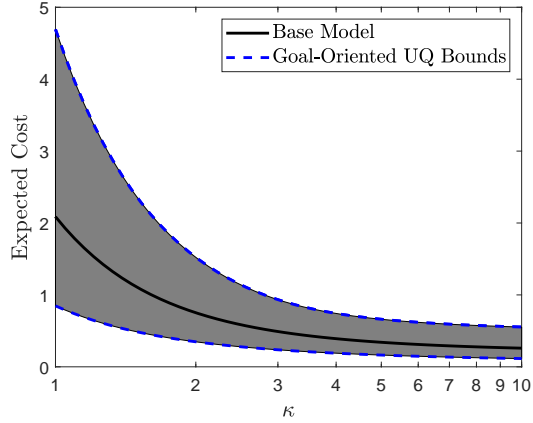}}
\caption{ Bound on the expected quadratic cost, \req{eq:control_QoI}, for a nonlinear  stochastic control system (alternative model) \req{eq:nonlin_control}, (suboptimally) controlled via the optimal control for an approximating linear system (baseline model) \req{eq:lin_control}.  The blue dashed lines, computed from \req{eq:control_UQ_final}, constrain the QoI to the gray region. The parameter $\kappa$ (defined in \req{eq:B_D_def}) governs the strength of the controller. }\label{fig:control_plot} 
 \end{figure}
Next, compute a Cholesky factorization of $C_\lambda$, $C_\lambda=M_\lambda^TM_\lambda$. The integrand in \req{eq:quadratic_cost} is non-negative, hence the hypotheses of Theorem \ref{thm:UQ_integral_QoI} are met and we can use \req{eq:Gaussian_UQ} to obtain
\begin{align}\label{eq:control_UQ_final}
&-\!\int_{\mathcal{T}}\inf_{c>0}\bigg\{\!-\frac{1}{2c}\log\left(|\det\left(I+c\Sigma_sC_\lambda\right)|\right)+\frac{1}{2c}\alpha^2s\bigg\}\pi(ds)\notag\\
\leq&E_{\widetilde{P}}\left[F\right]\\
\leq&\int_{\mathcal{T}}\inf_{0<c<1/\|M_\lambda\Sigma_sM_\lambda^T\|}\bigg\{\!-\frac{1}{2c}\log\left(|\det\left(I-c\Sigma_sC_\lambda\right)|\right)+\frac{1}{2c}\alpha^2s\bigg\}\pi(ds).\notag
\end{align}

 Figure \ref{fig:control_plot}  shows numerical results,  corresponding to the following simple example system, with a 2-D state variable and a 1-D control variable:
\begin{align}\label{eq:B_D_def}
B=\begin{bmatrix}
2 &0.1\\
0.1&-1
\end{bmatrix},\,\,\, D=\begin{bmatrix}
\kappa\\
0
\end{bmatrix},
\end{align}
$Q=I$, $R=1$, $\Sigma_0=0$, $\lambda=1/2$, $\sigma=I$, $\alpha=1/2$. Note that the size of the perturbation, $\sigma\beta$, is not required to be `small' for the method to produce non-trivial bounds, as this is a non-perturbative calculation. Finally,  if one instead attempts to use the non-goal-oriented relative entropy  to derive UQ bounds, the results are again trivial (infinite), due to the unbounded time-horizon.

\subsection{Semi-Markov Perturbations of a $M/M/\infty$ Queue}\label{example:birth_death}

Continuous-time jump processes with non-exponential waiting times (i.e., waiting times with memory) are found  in  many real-world systems; see, for example, the telephone call-center waiting time distribution in Figure 2 of \cite{doi:10.1198/016214504000001808}. However, these systems are often approximated by a much simpler Markovian (i.e., exponentially-distributed waiting time) model. Here we derive  robustness bounds for such approximations.

More specifically, we take the baseline model to be a  $M/M/\infty$ queue (i.e., continuous-time birth-death process) with arrival rate  $\alpha>0$ and service rate $\rho>0$:
\begin{flalign} \label{eq:CTMC_base}
\text{\bf      Baseline Model:} &&P(X_{t+dt}=x+1|X_t=x)=\alpha dt,\,\,\, P(X_{t+dt}=x-1|X_t=x)=\rho x dt,\,\,\,x\in\mathbb{Z}_0.&& 
\end{flalign}
The embedded jump Markov-chain  has transition probabilities, $a(x,y)$, given by
\begin{align}\label{eq:jump_chain}
a(x,x+1)=\alpha/(\alpha+\rho x),\,\,\,\,a(x,x-1)=\rho x/(\alpha+\rho x),
\end{align}
 and the waiting-times are exponentially distributed with jump rates 
 \begin{align}\label{eq:lambda_def}
 \lambda(x)=\alpha+\rho x.
 \end{align}
  We consider the stationary case, i.e., $X_0$ is Poisson with parameter $\alpha/\rho$.  Also, note that the jump chain with transition matrix  (\ref{eq:jump_chain}) has the stationary distribution
  \begin{align}\label{eq:queue_base_pi}
  \pi(x)= \frac{(\alpha+\rho x)(\alpha/\rho)^x }{2\alpha x!} e^{-\alpha/\rho}.
\end{align}

The alternative models will be semi-Markov processes; these describe jump-processes with non-exponential waiting times (i.e., with memory).  Mathematically, a semi-Markov model is a piecewise-constant continuous-time process  defined by a jump chain, $X^J_n$, and jump times, $J_n$, and waiting times (i.e., jump intervals), $\Delta_{n+1}\equiv  J_{n+1}- J_n$, that satisfy
\begin{flalign} 
\text{\bf        Alternative Models:} &&\widetilde{P}( X^J_{n+1}=y, \Delta_{n+1}\leq t| X^J_1,..., X^J_{n-1}, X^J_n, J_1,..., J_n)&& \label{eq:semi_markov_P}\\
&& = \widetilde{P}( X^J_{n+1}=y, \Delta_{n+1}\leq t| X^J_n)\equiv \widetilde{Q}_{X^J_n,y}(t). && &&\notag
\end{flalign}
  $\widetilde{Q}_{x,y}(t)$ is called the semi-Markov kernel (see \cite{janssen2006applied,limnios2012semi} for further background and applications). Note that the baseline process (\ref{eq:CTMC_base}) is described by the semi-Markov kernel  
  \begin{align}\label{eq:semi_markov_Q}
  Q_{x,y}(t)\equiv \int_0^t q_{x,y}(s)ds= a(x,y)\int_0^t \lambda(x)e^{-\lambda(x)s}ds.
  \end{align}
We emphasize that  the alternative models \eqref{eq:semi_markov_P} are not Markov processes in general; our methods make no such assumption on $\widetilde{P}$.
  
An important QoI in such a model is the average queue-length,
\begin{flalign}\label{eq:semiMarkov_QoI}
\text{\bf       QoI:} &&F_T\equiv\frac{1}{T}\int_0^T x_t dt.&& 
\end{flalign}
  More specifically, we will be concerned with the long-time behavior (limit of $E_{\widetilde{P}}[F_T]$ as $T\to\infty$). Time-averages such as \req{eq:semiMarkov_QoI} were discussed in Section \ref{sec:time_avg},  and for these the UQ bound \eqref{eq:goal_info_ineq2_stopped} implies
\begin{align}\label{semiMarkov_time_avg_UQ}
&\pm\left( E_{\widetilde{P}}\left[F_T\right]-E_P\left[F_T\right]\right)
\leq \inf_{c>0}\left\{\frac{1}{cT}\Lambda_P^{\widehat{F}_T}(\pm c T)+\frac{1}{cT}R( \widetilde{P}|_{\mathcal{F}_T}\|P|_{\mathcal{F}_T})\right\}.
\end{align}
$E_P\left[F_T\right]=\alpha/\rho$ is  the average queue-length in the (stationary) baseline process.   Note that one does not need to first prove $F_T\in L^1(\widetilde{P})$ to obtain \req{semiMarkov_time_avg_UQ}; non-negativity of the QoI allows one to first work with a truncated QoI and then take limits.
 
 The  cumulant generating function of the time-averaged queue-length has the following limit  (see Section 4.3.4 in \cite{doi:10.1137/19M1237429}):
 \begin{align}\label{eq:queue_baseline}
 \lim_{T\to\infty}T^{-1}\Lambda^{\widehat{F}_T}_{P}(cT)=\frac{\alpha c^2}{\rho^2(1-c/\rho)},\,\,\,c<\rho.
 \end{align}
 Combining \req{semiMarkov_time_avg_UQ} and \req{eq:queue_baseline}, we obtain the following UQ bound:
\begin{proposition}
Let $F_T$ denote the average queue-length (\ref{eq:semiMarkov_QoI}) and $\widetilde{P}$ be a semi-Markov perturbation (see \req{eq:semi_markov_P} - \req{eq:semi_markov_Q}).  Then
 \begin{align}\label{eq:time_avg_UQ_limit}
\limsup_{T\to\infty}\left( E_{\widetilde{P}}\left[F_T\right]-\alpha/\rho\right)\leq &\inf_{0<c<\rho}\left\{\frac{1}{c}\frac{\alpha c^2}{\rho^2 (1-c/\rho)}+\frac{1}{c}H(\widetilde{P}\|P)\right\}\\
=&\left(2\sqrt{ H(\widetilde{P}\|P)/\alpha}+H(\widetilde{P}\|P)/\alpha\right)\frac{\alpha}{\rho},\notag\\
\liminf_{T\to\infty}\left( E_{\widetilde{P}}\left[F_T\right]-\alpha/\rho\right)\geq&-\inf_{c>0}\left\{\frac{1}{c}\frac{\alpha c^2}{\rho^2 (1+c/\rho)}+\frac{1}{c}H(\widetilde{P}\|P)\right\}\notag\\
=&\begin{cases} 
      -\left(2\sqrt{H(\widetilde{P}\|P)/\alpha}-H(\widetilde{P}\|P)/\alpha\right)\frac{\alpha}{\rho} & H(\widetilde{P}\|P)<\alpha \\
      -\frac{\alpha}{\rho} & H(\widetilde{P}\|P)\geq\alpha,
   \end{cases}\notag
\end{align}
where $H(\widetilde{P}\|P)$ is the relative entropy rate:
\begin{align}\label{eq:semimarkov_eta}
H(\widetilde{P}\|P)\equiv \limsup_{T\to\infty} \frac{1}{T} R( \widetilde{P}|_{\mathcal{F}_T}\|P|_{\mathcal{F}_T}).
\end{align}
 \end{proposition}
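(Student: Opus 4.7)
The plan is to combine the finite-time bound \req{semiMarkov_time_avg_UQ} with the cumulant asymptotic \req{eq:queue_baseline} and the definition of the relative entropy rate \req{eq:semimarkov_eta}, and then carry out the resulting one-dimensional minimization in closed form. Writing $H\equiv H(\widetilde{P}\|P)$ and noting that stationarity of the baseline gives $E_P[F_T]=\alpha/\rho$ so $\widehat{F}_T=F_T-\alpha/\rho$, the key technical point is to fix $c$ inside the infimum of \req{semiMarkov_time_avg_UQ} \emph{before} passing to the $T$-limit; this avoids any awkward exchange of infimum and $\limsup$, since the infimum on the right-hand side of the target inequalities only appears after the limit has been taken.

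Concretely, for the upper sign and any fixed $c\in(0,\rho)$, \req{semiMarkov_time_avg_UQ} gives
\begin{align*}
E_{\widetilde{P}}[F_T]-\alpha/\rho \leq \frac{1}{cT}\Lambda_P^{\widehat{F}_T}(cT)+\frac{1}{cT}R(\widetilde{P}|_{\mathcal{F}_T}\|P|_{\mathcal{F}_T}).
\end{align*}
Applying $\limsup_{T\to\infty}$, using that \req{eq:queue_baseline} is a genuine limit (so $\limsup$ distributes additively over the sum with the relative-entropy term) together with \req{eq:semimarkov_eta}, yields $\limsup_T(E_{\widetilde{P}}[F_T]-\alpha/\rho)\leq \alpha c/(\rho^2(1-c/\rho))+H/c$. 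Taking the infimum over $c\in(0,\rho)$ gives the first half of the proposition. The lower bound is parallel: use the lower-sign form of \req{semiMarkov_time_avg_UQ}, observe that \req{eq:queue_baseline} extends to the negative argument as $\lim_T T^{-1}\Lambda_P^{\widehat{F}_T}(-cT)=\alpha c^2/(\rho^2(1+c/\rho))$ valid for all $c>0$, take $\liminf$, and rearrange to obtain $\liminf_T(E_{\widetilde{P}}[F_T]-\alpha/\rho)\geq -\inf_{c>0}\{\alpha c/(\rho^2(1+c/\rho))+H/c\}$.

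What is left is the explicit one-dimensional calculus. Substituting $u=c/\rho$ in each case, differentiating, and setting the derivative to zero gives $\sqrt{\alpha}\,u=\sqrt{H}\,(1\mp u)$, whence $u^\star=\sqrt{H}/(\sqrt{\alpha}\pm\sqrt{H})$; plugging back and using $1\mp u^\star=\sqrt{\alpha}/(\sqrt{\alpha}\pm\sqrt{H})$ produces the closed-form values $(2\sqrt{\alpha H}\pm H)/\rho=(\alpha/\rho)(2\sqrt{H/\alpha}\pm H/\alpha)$. The only place where extra care is needed---and essentially the only obstacle---is the boundary case $H\geq\alpha$ in the lower bound: here $\sqrt{\alpha}-\sqrt{H}\leq 0$, so there is no positive critical point, and one must instead check directly that $u\mapsto\alpha u/(1+u)+H/u$ is strictly decreasing on $(0,\infty)$ with $\lim_{u\to\infty}=\alpha$, giving $\inf=\alpha$ and hence the collapsed bound $-\alpha/\rho$.
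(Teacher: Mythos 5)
Your proof is correct and follows essentially the same path the paper intends: combine the finite-$T$ bound \req{semiMarkov_time_avg_UQ} with the CGF limit \req{eq:queue_baseline} and the definition \req{eq:semimarkov_eta}, then solve the one-parameter minimization. You also correctly handle the two details the paper glosses over — fixing $c$ before passing to $T\to\infty$ so that no exchange of $\inf$ and $\limsup$ is needed, and checking directly that the infimum equals $\alpha$ when $H\geq\alpha$ because the objective is strictly decreasing with limit $\alpha$.
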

 
A formula for the relative entropy rate between semi-Markov processes was obtained in \cite{10.2307/3216060} under the appropriate ergodicity assumptions: 
 \begin{align}
&H(\widetilde{P}\|P)=\frac{1}{ \widetilde{m}_{\widetilde\pi}}\sum_{x,y}\widetilde{\pi}(x) \int_0^\infty  \widetilde{q}_{x,y}(t)\log\left(\widetilde{q}_{x,y}(t)/q_{x,y}(t)\right)dt,\\
&\widetilde{m}_{\widetilde{\pi}}\equiv\sum_{x}\widetilde{\pi}(x)\int_0^\infty (1-\widetilde{H}_x(t))dt,\,\,\,\widetilde{H}_x(t)=\sum_y \widetilde{Q}_{x,y}(t),\notag
 \end{align}
 where $\widetilde{\pi}$ is the invariant distribution for the embedded jump chain of $\widetilde{P}$.
 
 In particular, an alternative process with the same jump-chain as the base process  but a different  waiting-time distribution, $\widetilde{H}_x(t)=\int_0^t \widetilde{h}_x(s)ds$, is described by the semi-Markov kernel
 \begin{align}\label{eq:semiMarkov_wait_perturb}
 \widetilde{Q}_{x,y}(t)=a(x,y)\widetilde{H}_x(t).
 \end{align}
  In this case,  $\widetilde{\pi}=\pi$ and $H(\widetilde{P}\|P)$ can be expressed in terms of the relative entropy of the waiting-time distributions:
 \begin{align}\label{eq:semimarkov_rel_ent}
 H(\widetilde{P}\|P)=& \limsup_{T\to\infty} \frac{1}{T} R( \widetilde{P}|_{\mathcal{F}_T}\|P|_{\mathcal{F}_T})=\frac{1}{\widetilde{m}_{ \pi}}E_{\pi(dx)}\left[R(\widetilde{H}_x\|H_x)\right],\\
 \widetilde{m}_{{\pi}}=&\sum_{x}{\pi}(x)\int_0^\infty (1-\widetilde{H}_x(t))dt,
 \end{align}
 where $\pi$ is the invariant distribution for $a(x,y)$; recall the stationary distribution for  the  jump chain  of the $M/M/\infty$-queue was given in \req{eq:queue_base_pi}.
\begin{remark}
The quantity $\widetilde{m}_\pi$ is the mean sojourn time under the invariant distribution, $\pi$, and the expecation $E_{\pi(dx)}\left[R(\widetilde{H}_x\|H_x)\right]$ can be viewed as the mean relative entropy of a single jump (comparing the alternative and baseline model waiting-time distributions). The formula for $H(\widetilde{P}\|P)$, \req{eq:semimarkov_rel_ent}, therefore has the clear intuitive meaning of an information loss per unit time.
\end{remark}

 Other than $\widetilde{H}_x$, the formulas \req{eq:time_avg_UQ_limit} and \req{eq:semimarkov_rel_ent} include only baseline model quantities.  Next we show one can bound $\widetilde{H}_x$ for a particular class of semi-Markov processes, and thus obtain computable UQ bounds.

\subsubsection{UQ Bounds for Phase Type Distributions} Phase-type distributions constitute a semi-parametric description of waiting-time distributions, going beyond the exponential case to describe systems with memory; see  \cite{doi:10.1002/asm.3150100403,10.2307/4616418,doi:10.1198/016214504000001808} for examples and information on fitting  such distributions to data.  Probabilistically, they can be characterized in terms the time to absorption for a continuous-time Markov chain with a single absorbing state.   The density and distribution function of a phase type-distribution, $\text{PH}_k(\nu,T)$, are characterized by a $k\times k$ matrix $T$, and a probability vector $\nu\in\mathbb{R}^k$ (see \cite{bladt2017matrix} for  background):
 \begin{align}\label{eq:phase_denstity}
 \widetilde{h}(t)=\nu e^{Tt}(-T1),\,\,\,\widetilde{H}(t)\equiv\int_0^t \widetilde{h}(s)ds=1-\nu e^{Tt}1,
 \end{align}
  where $1$ is the vector of all $1$'s and $T$ satisfies:
  \begin{enumerate}
  \item $T_{j,j}< 0$ for all $j$,
 \item $T_{j,\ell}\geq 0$ for all $j\neq \ell$,
 \item $-T1$ has all non-negative entries.
  \end{enumerate}
 
Combining \req{eq:phase_denstity} with \req{eq:semimarkov_rel_ent}, we obtain a formula for the relative entropy rate of a semi-Markov perturbation:
\begin{proposition} 
Let the baseline model consist of a $M/M/\infty$ queue  (\ref{eq:CTMC_base}) and let $\widetilde{P}$ be a semi-Markov perturbation with state-$x$ waiting-time distributed as $\text{PH}_{k(x)}(\nu(x),T(x))$ (and with the same jump-chain, $a(x,y)$, as the base process).  Then the relative entropy rate is
\begin{align}
&H(\widetilde{P}\|P)\\
=&\frac{1}{\widetilde{m}_\pi}\!\sum_x \pi(x) \! \int_0^\infty \!\!\! \log\left(\lambda(x)^{-1}\nu(x) e^{(T(x)+\lambda(x))t}(-T(x)1)  \right) \nu(x) e^{T(x)t}(-T(x)1) dt,\notag\\
&\widetilde{m}_\pi=\sum_x{\pi}(x)(-\nu(x) T^{-1}(x) 1).\notag
\end{align}
\end{proposition}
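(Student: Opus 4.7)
The plan is to apply the general formula \req{eq:semimarkov_rel_ent} for the relative entropy rate of a semi-Markov process whose jump chain coincides with the baseline jump chain, and then reduce the two ingredients (the sojourn-time expectation $\widetilde m_\pi$ and the per-jump relative entropy $E_{\pi(dx)}[R(\widetilde H_x\|H_x)]$) to explicit expressions involving $\nu(x),\,T(x),\,\lambda(x)$, and $\pi(x)$.

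First I would verify the hypotheses needed to invoke \req{eq:semimarkov_rel_ent}: since the perturbation changes only the waiting-time distribution and not the jump chain $a(x,y)$, the invariant distribution of the embedded chain under $\widetilde P$ equals the baseline invariant distribution $\pi$ given in \req{eq:queue_base_pi}, which is precisely the setting of \req{eq:semiMarkov_wait_perturb}. Thus
\begin{align*}
H(\widetilde P\|P)=\frac{1}{\widetilde m_\pi}\sum_x \pi(x)\,R(\widetilde H_x\|H_x),\qquad \widetilde m_\pi=\sum_x \pi(x)\int_0^\infty (1-\widetilde H_x(t))\,dt.
\end{align*}

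Next I would compute $\widetilde m_\pi$. By \req{eq:phase_denstity}, $1-\widetilde H_x(t)=\nu(x)e^{T(x)t}\mathbf 1$. Since the eigenvalues of $T(x)$ have strictly negative real parts (a standard consequence of the stated sign conditions on $T(x)$), the matrix $T(x)$ is invertible and $\int_0^\infty e^{T(x)t}\,dt=-T(x)^{-1}$, giving $\int_0^\infty(1-\widetilde H_x(t))\,dt=-\nu(x)T(x)^{-1}\mathbf 1$. Summing against $\pi$ yields the stated formula for $\widetilde m_\pi$.

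Then I would compute the single-jump relative entropy. The phase-type density is $\widetilde h_x(t)=\nu(x)e^{T(x)t}(-T(x)\mathbf 1)$, and the baseline waiting-time density in state $x$ is $h_x(t)=\lambda(x)e^{-\lambda(x)t}$ with $\lambda(x)$ as in \req{eq:lambda_def}. Writing out the definition and using commutativity of $T(x)$ with the scalar $\lambda(x)I$ so that $e^{(T(x)+\lambda(x))t}=e^{\lambda(x)t}e^{T(x)t}$, the integrand in
\begin{align*}
R(\widetilde H_x\|H_x)=\int_0^\infty \log\!\bigl(\widetilde h_x(t)/h_x(t)\bigr)\,\widetilde h_x(t)\,dt
\end{align*}
collapses to $\log\bigl(\lambda(x)^{-1}\nu(x)e^{(T(x)+\lambda(x))t}(-T(x)\mathbf 1)\bigr)$ times $\widetilde h_x(t)$, which is exactly the integrand displayed in the proposition. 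Combining this with the formula for $\widetilde m_\pi$ gives the claim.

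The only genuinely delicate point is the justification of the integral manipulations: one must know that the phase-type density is strictly positive so that the logarithm is well defined, and that the integral converges. Both follow from the spectral properties of $T(x)$ (stability and the fact that $\nu(x)e^{T(x)t}(-T(x)\mathbf 1)>0$ for $t>0$ whenever the phase-type representation is nondegenerate), together with the exponential decay of $\widetilde h_x(t)$, which dominates the logarithmic growth. Beyond that, the argument is essentially a direct substitution into \req{eq:semimarkov_rel_ent}.
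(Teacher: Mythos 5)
Your proposal is correct and follows the same route as the paper: substitute the phase-type density and survival function from \req{eq:phase_denstity} into the semi-Markov relative-entropy-rate formula \req{eq:semimarkov_rel_ent}, using $\int_0^\infty e^{T(x)t}\,dt=-T(x)^{-1}$ for $\widetilde m_\pi$ and the scalar commutation $e^{\lambda(x)t}e^{T(x)t}=e^{(T(x)+\lambda(x)I)t}$ to collapse the density ratio. The paper states this as an immediate consequence of combining those two equations, so your explicit verification of the invertibility/stability of $T(x)$ and of the integrability of the log-ratio is a welcome filling-in of details rather than a departure.
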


As a simple example,  consider the case where the waiting-time at $x$ is distributed as a convolution of $\exp(\lambda(x))$ and $\exp(\gamma(x))$-distributions (convolutions of exponential distributions are  examples of phase-type distributions; again, see \cite{bladt2017matrix}), for some choices of $\gamma(x)>\lambda(x)$. Recall the baseline model rates were given in \req{eq:lambda_def}. The semi-Markov kernel for the alternative process is then
\begin{align}\label{eq:semi_markov_kernel_convolution}
&\widetilde{Q}_{x,y}(t)= a(x,y)\int_0^t \lambda(x)e^{-\lambda(x)s}\frac{1-e^{-(\gamma(x)-\lambda(x))s}}{1-\lambda(x)/\gamma(x)}ds.
\end{align}
Note that the Markovian limit occurs at $\gamma(x)= \infty$, where the integrand is non-analytic.  Hence, it appears unlikely that one could perform a perturbative study of this system.  In contrast, our method will produce computable UQ bounds.

\begin{figure}
 \begin{minipage}[b]{0.45\linewidth}
\centerline{\includegraphics[height=6cm]{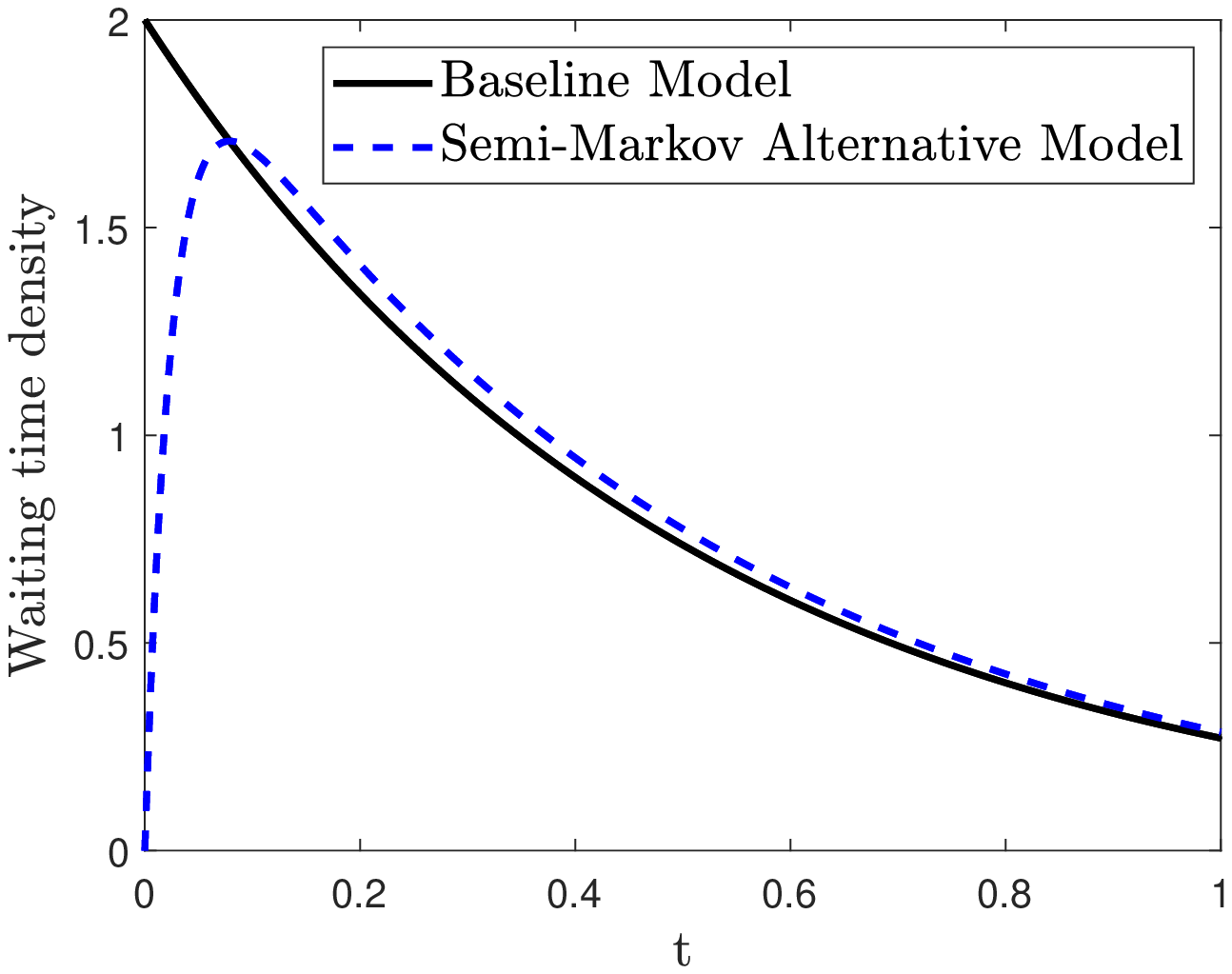}}
 \end{minipage}
 \hspace{0.5cm}
 \begin{minipage}[b]{0.5\linewidth}
\centerline{\includegraphics[height=6cm]{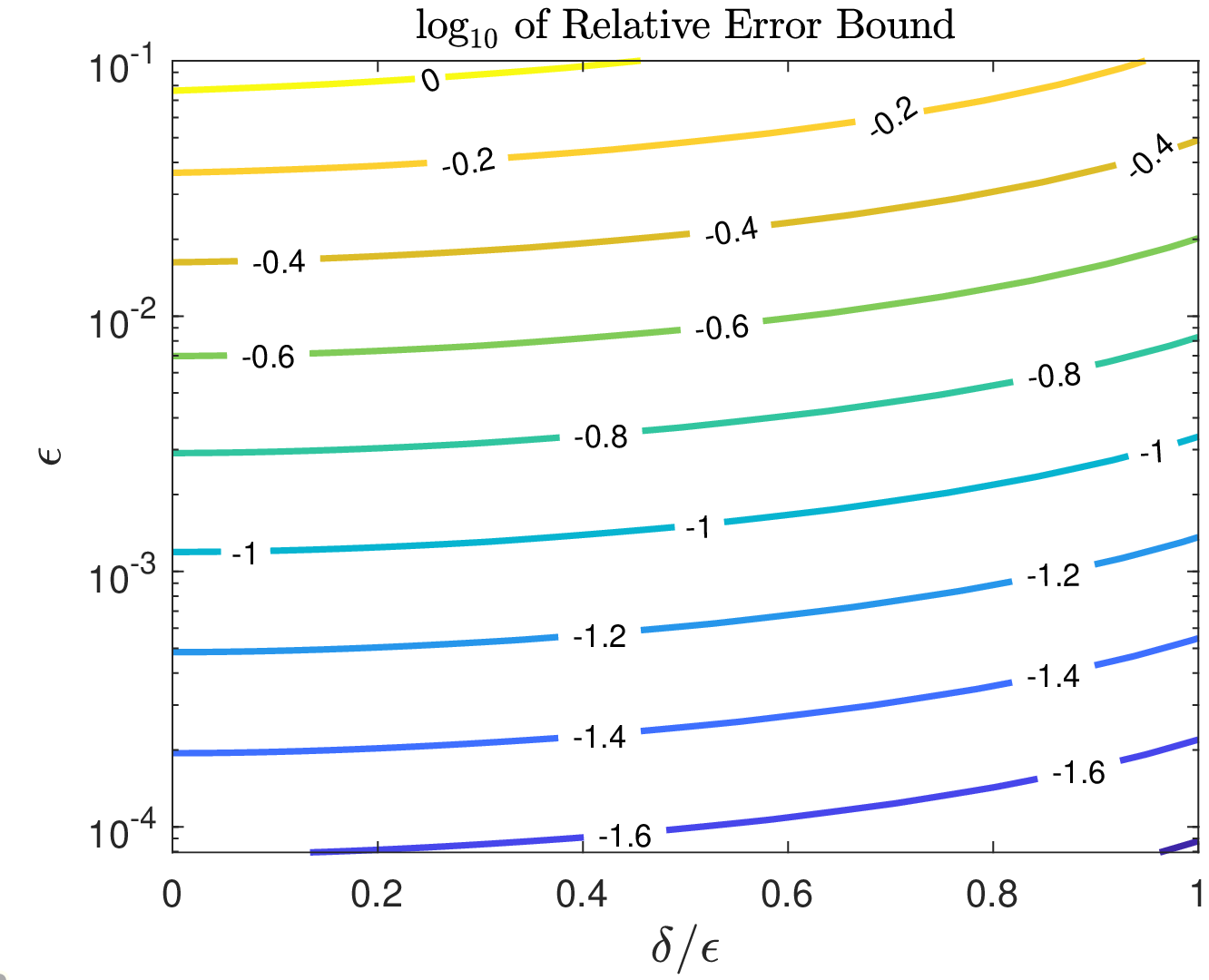}}
 \end{minipage}
\caption{Left: Waiting time densities for a baseline (exponential waiting time) model and a semi-Markov perturbation of the form \req{eq:semimarkov_perturbation}, corresponding to  $\delta=\epsilon=0.05$. Right: Upper bound on the relative error for the expected time-averaged-queue-length in the long-time limit, \req{eq:semimarkov_bound}, for a semi-Markov perturbation of a $M/M/\infty$ queue. The parameters $\delta$ and  $\epsilon$ quantify the size of the  perturbation; see \req{eq:semimarkov_perturbation} for their definitions.  As $\epsilon\to 0$, the semi-Markov waiting times approach exponential waiting times and the relative error approaches $0$. Increasing $\delta$ reduces the size of the model uncertainty; this is also reflected in the upper bound.}\label{fig:semimarkov}
 \end{figure}

To proceed, one must assume some partial knowledge of $\gamma(x)$. Here we consider the alternative family   consisting of  processes of the form \req{eq:semi_markov_kernel_convolution} with the following parameter bounds (of course, other choices are possible):
\begin{flalign}\label{eq:semimarkov_perturbation}
\text{\bf       Alternative Models:} &&0\leq \delta\leq \lambda(x)/\gamma(x)\leq \epsilon<1 \,\,\text{ for all $x$, i.e., } \frac{1}{\epsilon}\lambda(x)\leq \gamma(x)\leq \frac{1}{\delta}\lambda(x),&& 
\end{flalign}
where the jump-rates for the baseline model are $\lambda(x)=\alpha+\rho x$.  As a function of $x$, \req{eq:semimarkov_perturbation} therefore constrains $\gamma(x)$ to lie between two lines. This set describes waiting-times that are approximately  $\exp(\lambda(x))$-distributed when $\epsilon$ is small (i.e., when $\gamma(x)\gg\lambda(x)$). The parameter $\delta$ is a lower-bound on the perturbation. See the left panel of Figure \ref{fig:semimarkov} for an illustrative  example of this class of alternative models.

Using \req{eq:semimarkov_perturbation} we can bound
\begin{align}\label{eq:semiMarkov_eta_bound}
H(\widetilde{P}\|P)=&\frac{1}{ \widetilde{m}_\pi}\!\sum_{x}\pi(x) \!\int_0^\infty\! \!\!\lambda(x)e^{-\lambda(x)t}\frac{1-e^{-(\gamma(x)-\lambda(x))t}}{1-\lambda(x)/\gamma(x)}\log\left(\frac{1-e^{-(\gamma(x)-\lambda(x))t}}{1-\lambda(x)/\gamma(x)}\right)\!dt\\
\leq& \alpha r(\delta,\epsilon),\notag\\
r(\delta,\epsilon)\equiv& \frac{2}{ 1+\delta} \int_0^\infty  e^{-u}\!\left(\frac{1-e^{-(\epsilon^{-1}-1)u}}{1-\delta}1_{u<\frac{\log(1/\epsilon)}{\delta^{-1}-1}}+\frac{1-e^{-(\delta^{-1}-1)u}}{1-\epsilon}1_{u\geq \frac{\log(1/\epsilon)}{\delta^{-1}-1}}\right)\notag\\
&\hspace{1cm}\times\log\left(\frac{1-e^{-(\delta^{-1}-1)u}}{1-\epsilon}\right) du.\notag
\end{align}
To obtain the above, we used the inequality
\begin{align}\label{eq:semiMarkov_m}
 \widetilde{m}_\pi=&\sum_{x}\pi(x)\lambda(x)^{-1}(1+\lambda(x)/\gamma(x))   \geq(1+\delta)\sum_x\pi(x)\lambda(x)^{-1} =\frac{1+\delta}{2\alpha}.
\end{align}
Combining \req{eq:semiMarkov_eta_bound} and \req{eq:semiMarkov_m} with \req{eq:time_avg_UQ_limit}  gives the relative-error bounds
\begin{align}\label{eq:semimarkov_bound}
&\limsup_{T\to\infty}\left( E_{\widetilde{P}}\left[F_T\right]/\left(\frac{\alpha}{\rho}\right)-1\right)\leq 2\sqrt{ r(\delta,\epsilon)}+r(\delta,\epsilon),\\
&\liminf_{T\to\infty}\left( E_{\widetilde{P}}\left[F_T\right]/\left(\frac{\alpha}{\rho}\right)-1\right)\geq -\left(2\sqrt{r(\delta,\epsilon)}-r(\delta,\epsilon)\right)1_{r(\delta,\epsilon)<1}- 1_{r(\delta,\epsilon)\geq 1}.\notag
\end{align}
These bounds depend only on  the uncertainty parameters $\epsilon$ and $\delta$, and not on the base-model parameters. See the right pane in Figure \ref{fig:semimarkov} for a contour plot of the logarithm of the upper bound \req{eq:semimarkov_bound}.  Note that the error decreases as $\delta$ increases, due to the decreasing uncertainty in the form of the perturbation $\gamma$ (see \req{eq:semimarkov_perturbation}). As $\epsilon\to 0$, the semi-Markov perturbation approaches the baseline model (see \req{eq:semi_markov_kernel_convolution} and \req{eq:semimarkov_perturbation}) and so the relative error approaches $0$.  As previously noted,  relative entropy based UQ methods  are sub-optimal for   risk-sensitive quantities (i.e., rare events); see \cite{2020arXiv200102110A} for a treatment of risk-sensitive UQ for queueing models.

\subsection{Value of American Put Options with Variable Interest Rate}\label{sec:options}
As a final example, we consider the value of an asset in a variable-interest-rate environment, as compared to a constant-interest-rate baseline model.   For background see, for example,  Chapter 8 of \cite{shreve2004stochastic} or Chapter 8 of \cite{glasserman2013monte}. This example does not fit neatly into any of the problem categories from Section \ref{sec:UQ_processes}, though the technique of Corollary \ref{lemma:gen_rel_ent_bootstrap} will play a critical role. In particular, we will see that  the natural QoI for the base and alternative models is different; see \req{eq:finance_base_QoI} and \req{eq:finance_goal}.

Specifically, the baseline model in this example is geometric Brownian motion:
\begin{flalign}\label{eq:options_base}
\text{\bf        Baseline Model:} && dX_t^r=rX_t^rdt+\sigma X_t^rdW_t,&& 
\end{flalign}
where  $r,\sigma>0$ are constants.  This has the explicit solution
\begin{align}
X_t^r=X^r_0 \exp\left(\sigma W(t)+(r-\sigma^2/2)t\right).
\end{align} 
The variable-interest-rate alternative model has the general form
\begin{flalign}\label{eq:finance_SDE_tilde}
\text{\bf       Alternative Models:} &&d\widetilde{X}_t=(r+\Delta r(t,\widetilde{X}_t,\widetilde{Y}_t))\widetilde{X}_tdt+\sigma \widetilde{X}_td{W}_t.&& 
\end{flalign}
The initial values $X^r_0=\widetilde{X}_0\equiv X_0$ will be fixed  (here, the superscript on $X^r$ will always denote the constant interest rate, $r$, that is being used).  $r$ will be thought of as a known, fixed value, while $\Delta r$ will be considered an unknown  perturbation  that depends on time, on the asset price, as well as  on an additional driving process, $\widetilde{Y}_t$, that solves some auxiliary SDE; see Appendix \ref{sec:rel_ent_SDE} for more precise assumptions and \cite{shreve2004stochastic} for  discussion of various interest rate models. In Section \ref{sec:Vasicek} below, we will consider the Vasicek model.

The quantity of interest we consider is the value of a perpetual American put option:  Let $K>0$ be the option strike price.   The  payout if the option is exercised at time $t\geq 0$ is $K-x_t$, where $x_t$ is the asset price at time $t$. The relevant QoI is then the value, discounted to the present time.  In the baseline model, this takes the form   $V:[0,\infty]\times C([0,\infty),\mathbb{R})\to\mathbb{R}$, 
\begin{align}
V_t[x]=e^{-rt}(K-x_t) 1_{t<\infty},
\end{align}
while for the alternative model it is $\widetilde{V}:[0,\infty]\times C([0,\infty),\mathbb{R}\times\mathbb{R}^k)\to\mathbb{R}$,
\begin{align}
\widetilde{V}_t[x,y]=e^{-\int_0^t r+\Delta r(s,x_s,y_s)ds}(K-x_t)1_{t<\infty}.
\end{align}
The option-holder's strategy is to exercise the option when the asset price hits some level $L$, assumed to satisfy $L<X_0$ and $0<L<K$, i.e.,  we consider the   stopping time $\tau[x,y]=\tau[x]=\inf\{t\geq 0: x_t\leq L\}$. Therefore the baseline model QoI is
\begin{flalign}\label{eq:finance_base_QoI}
\text{\bf       Baseline QoI:} &&E[V_\tau[X^r]]=(K-L)(L/X_0)^{2r/\sigma^2}.&& 
\end{flalign}
To evaluate this, one uses the fact that 
 \begin{align}\label{eq:tau_Xr}
\tau\circ X^r=\inf\{t:X_t^r=L\}=\inf\{t: W_t+(r/\sigma-\sigma/2)t=-\sigma^{-1}\log(X_0/L)\}
\end{align}
 is the level $a\equiv -\sigma^{-1}\log(X_0/L)$ hitting time of a Brownian motion with constant drift $\mu\equiv r/\sigma-\sigma/2$, combined with the formula for the distribution of  such hitting times (see \cite{karatzas2014brownian} and also Appendix \ref{app:BM_hitting_times}).

The goal of  our analysis  is now to bound the expected option value in the alternative model:
\begin{flalign}\label{eq:finance_goal}
\text{\bf      Alternative QoI:} &&{E}\left[\widetilde{V}_{\tau\circ\widetilde{X}}[\widetilde{X},\widetilde{Y}]\right]= (K-L){E}\left[1_{\tau\circ\widetilde{X}<\infty}e^{-\int_0^{\tau\circ\widetilde{X}} r+\Delta r(s,\widetilde{X}_s,\widetilde{Y}_s)ds}\right].&& 
\end{flalign}
Note that both QoIs have unbounded time-horizon; therefore, to obtain non-trivial bounds one must again use a goal-oriented relative entropy.

\begin{remark}\label{remark:perturb_sigma}
The methods developed in this paper, applied to the goal oriented relative entropy $R(\widetilde{P}|_{\mathcal{F}_\tau}\|{P}|_{\mathcal{F}_\tau})$, are still not capable of comparing the  baseline model \req{eq:options_base}, with asset-price volatility $\sigma$, to an alternative model with perturbed asset-price volatility, $\sigma+\Delta\sigma$, due to the loss of absolute continuity  (again, see page 80 in \cite{freidlin2016functional}).  It is likely that one does have absolute continuity on a smaller sigma algebra than $\mathcal{F}_\tau$,  but we do not currently know how to bound the relative entropy on any such smaller sigma algebra.   An alternative approach to   robust option pricing under uncertainty in $\sigma$, utilizing $H_\infty$-control methods, was developed in \cite{doi:10.1287/moor.22.1.202}.
\end{remark}

To obtain UQ bounds for \req{eq:finance_goal}, it is useful to define the modified QoI
\begin{align}
F_t[x,y]=(K-L)\exp\left(-\int_0^t r+\Delta r(s,x_s,y_s)ds\right)1_{t<\infty},
\end{align}
and note that $E\left[\widetilde{V}_{\tau\circ\widetilde{X}}[\widetilde{X},\widetilde{Y}]\right]= {E}_{\widetilde{X},\widetilde{Y}}\left[F_{\tau}\right]$, where  ${E}_{\widetilde{X},\widetilde{Y}}$ denotes the expectation with respect to the joint distribution of $(\widetilde{X},\widetilde{Y})$ on path-space. (The notation $E_{X^r,Y}$ will similarly be used below.) In the next  subsection, we will bound  ${E}_{\widetilde{X},\widetilde{Y}}\left[F_{\tau}\right]$ for one class of rate perturbations.

\subsubsection{Vasicek Interest Rate Model}\label{sec:Vasicek}
Here we study a specific type of dynamical  interest rate model, known as the Vasicek model (see, for example, page 150 in \cite{shreve2004stochastic}):
\begin{flalign} 
\text{\bf      Alternative Models:} && &d\widetilde{X}_t=(r+\Delta r_t)\widetilde{X}_tdt+\sigma \widetilde{X}_td{W}_t,&& \label{eq:Vasicek_asset}\\
&&  &d\Delta r_t=-\gamma \Delta r_tdt+\widetilde{\sigma} d\widetilde W_t, \,\,\,\Delta r_0=0,\,\,\gamma>0,\,\,\widetilde{\sigma}>0, && \label{eq:Vasicek_rate}
\end{flalign}
where $W$ and $\widetilde{W}$ are independent Brownian motions.  The baseline model is still given by \req{eq:options_base}. Robustness bounds under such perturbations can be viewed as a model stress test for the type of financial instrument (QoI) studied here; see \cite{engelmann2011basel} for a detailed discussion of stress testing.

  The SDE \eqref{eq:Vasicek_rate} defines a 2-dimensional parametric family of alternative models, parameterized by $\gamma>0$ and $\widetilde{\sigma}>0$. Infinite-dimensional alternative families of $\Delta r$-models can also be considered; see Appendix \ref{app:options} for one such class of examples. \req{eq:Vasicek_rate}   has the exact solution
\begin{align}
\Delta r_t=\widetilde\sigma e^{-\gamma t}\int_0^t e^{\gamma s}d\widetilde{W}_s.
\end{align}
Note that $\Delta r_t$ is unbounded, and so a comparison-principle bound is is not  possible here. Under the model \eqref{eq:Vasicek_asset}, $r+\Delta r$ can become negative. Here, for both modeling and mathematical reasons, we eliminate this possibility by conditioning on the event $\int_0^{\tau\circ\widetilde{X}} r+\Delta r_sds\geq 0$, i.e., we only consider those paths with non-negative average interest rate up to the stopping time. This amounts to using the modified QoI
\begin{align}\label{eq:vasicek_QoI_non-neg}
\widetilde{F}_t[x,\Delta r]\equiv (K-L)\exp\left(-\int_0^t r+\Delta r_sds\right)1_{t<\infty}1_{\int_0^t r+\Delta r_sds\geq 0}
\end{align}
and bounding
\begin{flalign} \label{eq:Vasicek_conditional}
\text{\bf      Alternative QoI:} && \widetilde{E}[ F_\tau]\equiv{E}_{\widetilde{X},\Delta r}\!\left[\widetilde{F}_{\tau}\right]\!/{P}_{\widetilde{X},\Delta r}\left(\int_0^\tau \!\!r+\Delta r_sds\geq 0\right).&&
\end{flalign}
Other more complex interest rate models exist that enforce a positive rate via the dynamics; see, for example, page 275 in \cite{shreve2004stochastic}.

We also restrict to the parameter values $r>\frac{\widetilde\sigma^2}{2\gamma^2}$, for which a negative average interest rate is sufficiently rare. More precisely, this assumption implies $\lim_{n\to\infty}\int_0^n r+\Delta r_sds=\infty$ a.s.; see Appendix \ref{app:vasicek} for details. As a consequence, we also obtain $\widetilde{F}_{n}\to \widetilde{F}_\infty=0$ a.s. (under both the base and alternative models).  Therefore, Corollary \ref{cor:targeted_info_stopped} and Lemma \ref{lemma:lim_rel_ent} (see Appendix \ref{sec:rel_ent}) can both be used, yielding
\begin{align}\label{eq:Vasicek_UQ}
&\pm {E}_{\widetilde{X},\Delta r}\!\!\left[\widetilde{F}_{\tau}\right]\!\leq\inf_{c>0}\left\{\frac{1}{c}\Lambda_{P_{X^r,\Delta r}}^{\widetilde{F}_{\tau}}\!(\pm c)\!+\frac{1}{c}\liminf_{n\to\infty}R({P}_{\widetilde{X},\Delta r}|_{\mathcal{F}_{\tau\wedge n}}\|P_{X^r,\Delta r}|_{\mathcal{F}_{\tau\wedge n}})\right\}.
\end{align}

For this example, we do not utilize Theorem \ref{thm:goal_info_ineq3}, as the resulting expectation is difficult to evaluate.  Instead, we work with the cumulant generating function and relative entropy terms separately and use Corollary \ref{lemma:gen_rel_ent_bootstrap} together with Theorem \ref{thm:obs_adap_info_ineq}.

First we compute the cumulant generating function.  $\tau\circ X^r$ and $\Delta r$ are independent, hence we can use the result of \cite{MarioAbundo2015} to find
\begin{align}\label{eq:Vasicek_Lambda_F}
&\Lambda_{P_{X^r,\Delta r}}^{\widetilde F_{\tau}}(\pm c)\\
=&\log  \int \int_{-\infty}^\infty (2\pi\widetilde{\sigma}_t^2)^{-1/2} \exp\left(\pm c (K-L)e^{-z}1_{t<\infty}1_{z\geq 0}\right) e^{-\frac{(z-rt)^2}{2\widetilde{\sigma}_t^2}}dz P_{\tau\circ X^r}(dt),\notag\\
&\widetilde{\sigma}_t^2\equiv\frac{\widetilde{\sigma}^2}{2\gamma^3}\left(2\gamma t+4 e^{-\gamma t} - e^{-2\gamma t} - 3\right).\notag
\end{align}
 The integral over $t$ in \req{eq:Vasicek_Lambda_F} can then be computed as discussed in the text surrounding \req{eq:tau_Xr}, with $a\equiv -\sigma^{-1}\log(X_0/L)$ and $\mu\equiv r/\sigma-\sigma/2$. (When $t=\infty$, the inner integral should be interpreted as equaling $1$.)

\begin{remark}
Note that if the integrated rate ($z$ in \req{eq:Vasicek_Lambda_F}) is allowed to be negative (i.e., we remove $1_{z\geq 0}$ from the exponential) then $\Lambda_{P_{X^r,\Delta r}}^{\widetilde F_{\tau}}(c)$ is infinite and the upper UQ bound is  the trivial bound, $+\infty$.  This is the mathematical reason for conditioning on the event $\int_0^\tau r+\Delta r_sds\geq 0$.
 \end{remark}

 The relative entropy can be computed via Girsanov's theorem (Appendix \ref{app:vasicek} for a proof that the Girsanov factor is an exponential martingale)
\begin{align}
&R({P}_{\widetilde{X},\Delta r}|_{\mathcal{F}_{\tau\wedge n}}\|P_{X^r,\Delta r}|_{\mathcal{F}_{\tau\wedge n}})\leq E_{\widetilde{X},\Delta r}\left[G_n\right],\,\,\,\,\,G_n[x,\Delta r]\equiv \frac{1}{2}\int_0^{\tau(x)\wedge n}\!\!\!\!\sigma^{-2}|\Delta r_s|^2ds.
\end{align}
Note that $G_n$ is $\mathcal{F}_{\tau\wedge n}$-measurable and has finite expectation, as one can bound $\tau\wedge n$ by $n$ and then use the fact that $\Delta r_s$ is normal with mean $0$ and variance $\frac{\widetilde{\sigma}^2}{2\gamma}(1-e^{-2\gamma s})$.  Hence,  Corollary \ref{lemma:gen_rel_ent_bootstrap} is applicable, resulting in
\begin{align}\label{eq:Vasicek_R_bound}
&R({P}_{\widetilde{X},\Delta r}|_{\mathcal{F}_{\tau\wedge n}}\|P_{X^r,\Delta r}|_{\mathcal{F}_{\tau\wedge n}})\leq \inf_{\lambda>1}\left\{(\lambda-1)^{-1}\Lambda_{P_{X^r,\Delta r}}^{G_n}(\lambda)\right\}.
\end{align}
Again using the independence of $\tau\circ X^r$ and $\Delta r$, we have
\begin{align}\label{eq:Lambda_G}
\Lambda_{P_{X^r,\Delta r}}^{G_n}(\lambda)=\log \int E\left[e^{\lambda\frac{\sigma^{-2}}{2}\int_0^{t\wedge n}|\Delta r_s|^2ds}\right]P_{\tau\circ X^r}(dt).
\end{align}
Therefore
\begin{align} 
&\liminf_{n\to\infty}R({P}_{\widetilde{X},\Delta r}|_{\mathcal{F}_{\tau\wedge n}}\|P_{X^r,\Delta r}|_{\mathcal{F}_{\tau\wedge n}})\leq \inf_{\lambda>1}\left\{(\lambda-1)^{-1}\log \int E\left[e^{\lambda\frac{\sigma^{-2}}{2}\int_0^{t }|\Delta r_s|^2ds}\right]P_{\tau\circ X^r}(dt)\right\}
\end{align}
and the inner expectation can be evaluated using Theorem 1 (B) of \cite{10.2307/2102129}:
\begin{align}\label{eq:Vasicek_dr2}
&E\left[e^{\lambda\frac{\sigma^{-2}}{2}\int_0^{t}|\Delta r_s|^2ds}\right]\\
=&e^{\gamma t/2}\left[\frac{1}{\sqrt{1-\frac{\lambda\widetilde\sigma^2}{\sigma^2\gamma^2}}}\sinh\left(\gamma t\sqrt{1-\frac{\lambda\widetilde\sigma^2}{\sigma^2\gamma^2}}\right)+\cosh\left(\gamma t\sqrt{1-\frac{\lambda\widetilde\sigma^2}{\sigma^2\gamma^2}}\right)\right]^{-1/2}.\notag
\end{align}
By analytic continuation, this formula is valid for $\lambda<\sigma^2\gamma^2/\widetilde{\sigma}^2$.  Also, note that for the $n\to\infty$ limit of \req{eq:Lambda_G} to be finite, we need $\tau\circ X^r<\infty$ $P$-a.s., i.e., $\mu$ and $a$ must have the same signs.

\begin{figure}
 \begin{minipage}[b]{0.45\linewidth}
\centerline{\includegraphics[height=6cm]{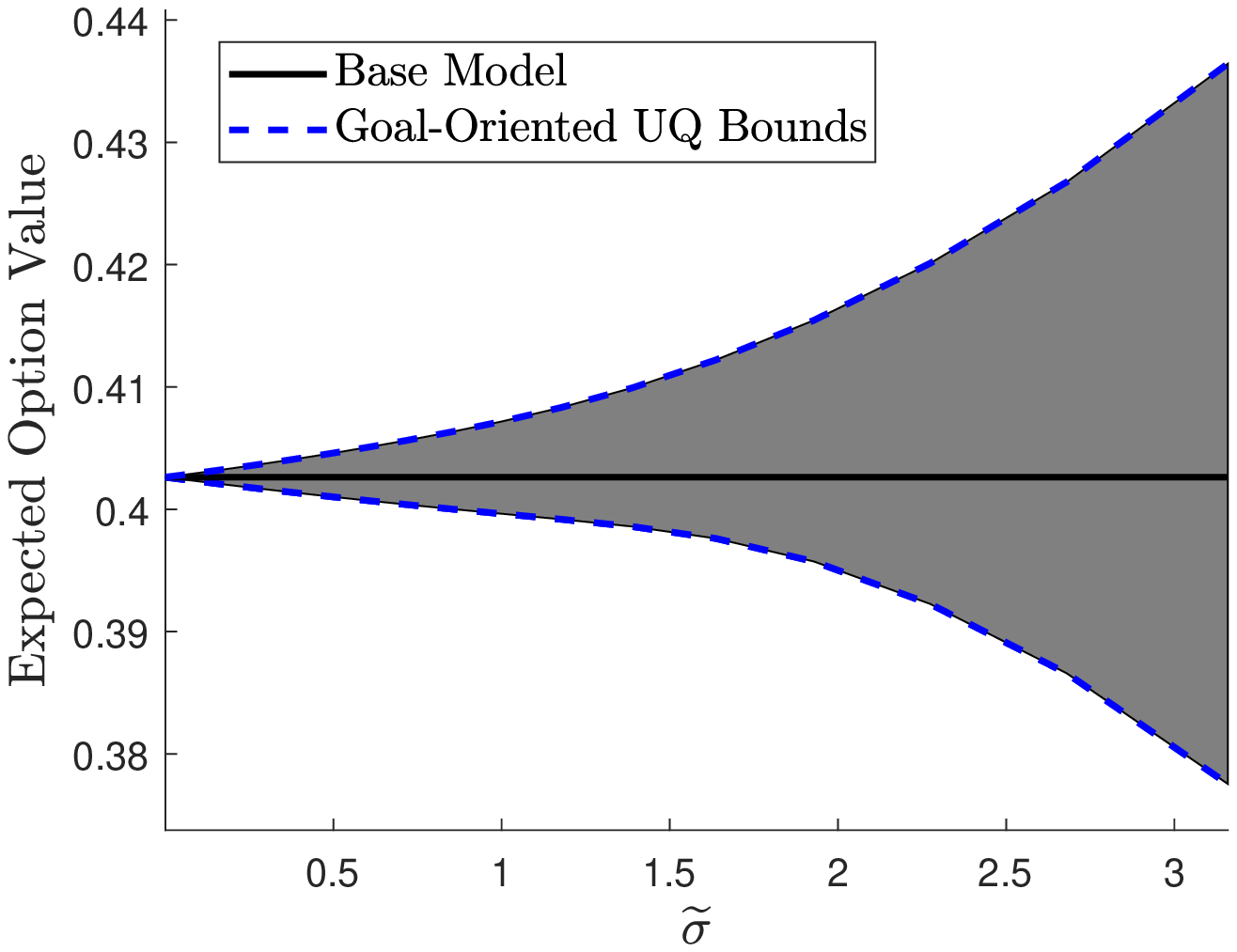}}
 \end{minipage}
 \hspace{0.5cm}
 \begin{minipage}[b]{0.5\linewidth}
\centerline{\includegraphics[height=6cm]{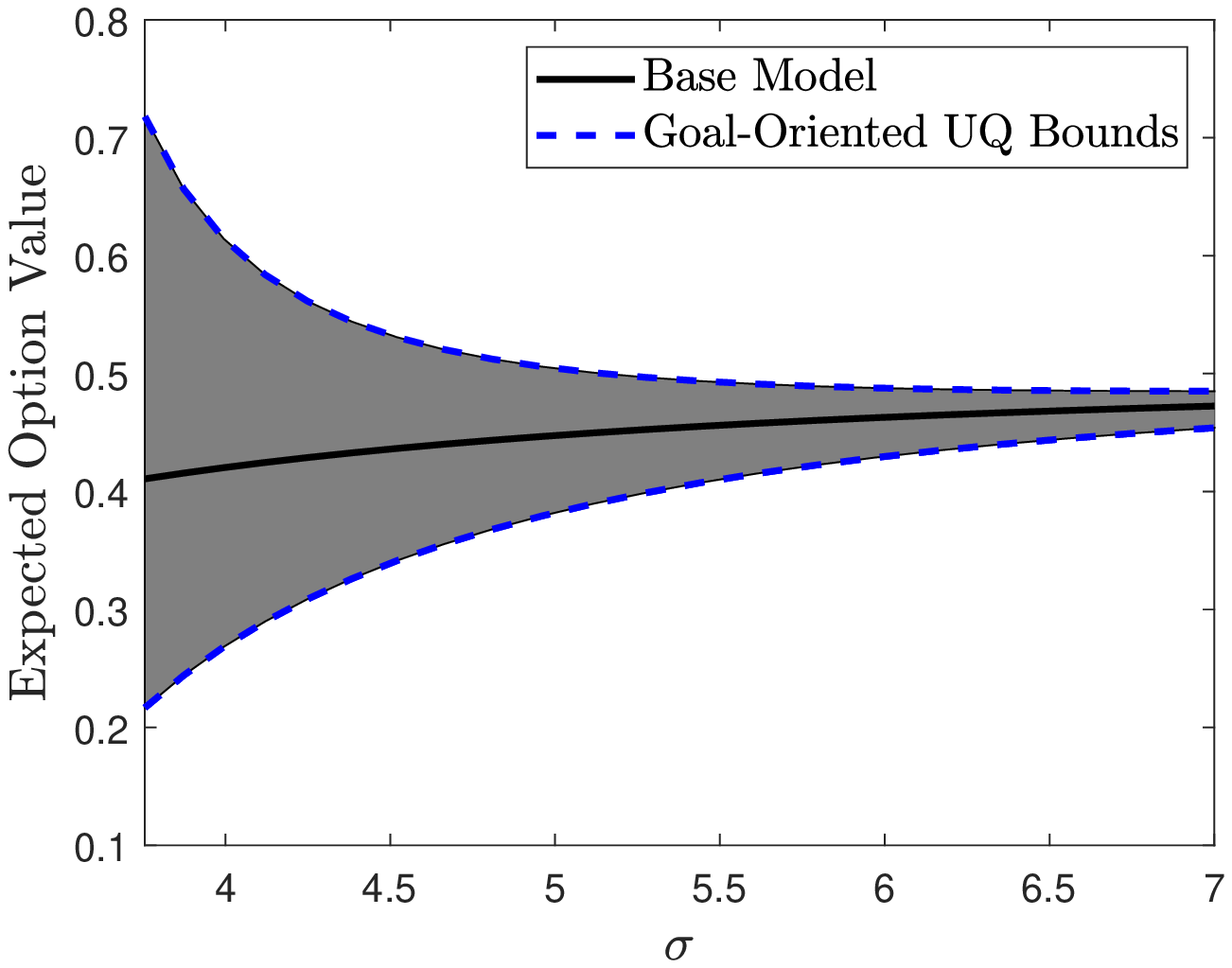}}
 \end{minipage}
 \caption{Bound on the expected option value under the Vasicek model. Black curves are for the baseline model (see \req{eq:finance_base_QoI}) and blue dashed curves are the bounds on the alternative model QoI, \req{eq:Vasicek_conditional}, that result from our method (see \req{eq:Vasicek_UQ} - \req{eq:Vasicek_P}); the QoI is constrained to the gray regions. Left:  As a function of the volatility of the interest-rate perturbation, with $\gamma=2$, $r=5/4$, $K=1$, $L=1/2$, $X_0=2$, $\sigma=4$. Right: As a function of the volatility of the asset price, with $\gamma=2$, $r=1$, $K=1$, $L=1/2$, $X_0=2$, $\widetilde{\sigma}=6$.}\label{fig:option_value_Vasicek}
 \end{figure}

\begin{figure}
 \end{figure}

We can  use Corollary \ref{cor:event_prob_UQ} to bound  ${P}_{\widetilde{X},\Delta r}\left(\int_0^\tau r+y_sds\geq 0\right)$:
\begin{align}\label{eq:Vasicek_Ptilde_bound}
&\pm {P}_{\widetilde{X},\Delta r}\left(\int_0^\tau r+y_sds\geq 0\right)\\
\leq& \inf_{c>0}\bigg\{\frac{1}{c}\log\left(1+(e^{\pm c}-1)P_{X^r,\Delta r}\left(\int_0^\tau r+\Delta r_sds\geq 0\right)\right)\notag\\
&\hspace{.2cm}+\frac{1}{c}\liminf_{n\to\infty}R(P_{\widetilde{X},\Delta r}|_{\mathcal{F}_{\tau\wedge n}}\|P_{X^r,\Delta r}|_{\mathcal{F}_{\tau\wedge n}})\bigg\},\notag
\end{align}
where we computed the  MGF as in \req{eq:event_UQ_bound} and 
\begin{align}\label{eq:Vasicek_P}
P_{X^r,\Delta r}\left(\int_0^\tau r+\Delta r_sds\geq 0\right)=&\int \int_0^\infty (2\pi\widetilde{\sigma}_t^2)^{-1/2} e^{-\frac{(z-rt)^2}{2\widetilde{\sigma}_t^2}} dz P_{\tau\circ X_r}(dt)\\
=&\int \frac{1}{2}\left(\text{erf}\left(rt/\sqrt{2\widetilde{\sigma}^2_t}\right)+1\right) P_{\tau\circ X_r}(dt).\notag
\end{align}

Figure \ref{fig:option_value_Vasicek} shows numerical results for the bounds on \req{eq:Vasicek_conditional} that result from combining \req{eq:Vasicek_UQ} through  \req{eq:Vasicek_P} (the full statement of the bound is quite lengthy so, rather than stating it here, we have placed it  in Appendix \ref{app:vasicek},  \req{eq:Vasicek_final_bound}). The baseline model is shown in the black solid curves (see \req{eq:finance_base_QoI}) and the blue dashed curves show the bounds \eqref{eq:Vasicek_final_bound} on the alternative model QoI that result from our method, which constrain the QoI to the gray regions; we emphasize that  the alternative model uses the modified QoI, \req{eq:vasicek_QoI_non-neg}-\eqref{eq:Vasicek_conditional}. In the left plot we see that the uncertainty approaches zero as $\widetilde{\sigma}$ approaches zero, corresponding to zero rate perturbation.  However, $\widetilde{\sigma}$ does not need to be small for our method to apply.

\begin{remark}
We are able to perturb the diffusion parameter, $\widetilde{\sigma}$, without losing absolute continuity because, technically, the baseline model is the joint distribution of $(X^r_t,\Delta r_t)$.  Of course, $X^r$ is not coupled to $\Delta r_t$, nor does  the baseline QoI depend on it, but regardless, this makes it clear that there is no loss of absolute continuity between the distributions of  $(X^r_t,\Delta r_t)$ and $(\widetilde{X}_t,\Delta r_t)$ when $\widetilde{\sigma}$ is changed.
\end{remark}

\appendix

\section{ Proof of the UQ Information Inequality}\label{app:base_inequality_proof}
For completeness, here we provide a proof of the UQ bound (\ref{goal_oriented_bound}), which can be found in \cite{chowdhary_dupuis_2013,DKPP}:
\begin{proposition}
Let $P$, $\widetilde{P}$ be probability measures and   $F:\Omega\to\mathbb{R}$, $F\in L^1(\widetilde P)$.  Then
\begin{align}\label{goal_oriented_bound2}
\pm E_{\widetilde P}[F]\leq \inf_{c>0}\left\{\frac{1}{c} \Lambda_{P}^{F}(\pm c)+\frac{1}{c}R(\widetilde P\|P)\right\}.
\end{align}
\end{proposition}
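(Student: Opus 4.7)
The plan is to prove the bound via a Gibbs-measure / Donsker–Varadhan argument, exploiting non-negativity of relative entropy against a tilted reference measure. First I would dispense with the trivial case: if $R(\widetilde P\|P)=+\infty$, the right-hand side is $+\infty$ and the bound holds vacuously, so I may assume $\widetilde P\ll P$ and set $\rho=d\widetilde P/dP$. I may also assume $\Lambda_P^F(\pm c)<\infty$ for the particular $c$ under consideration; otherwise that term already gives $+\infty$.

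Fix $c>0$ and define the tilted (Gibbs) probability measure
\begin{equation}
dQ_{\pm c}=\frac{e^{\pm cF}}{E_P[e^{\pm cF}]}\,dP,
\end{equation}
which is well-defined under the finiteness assumption on $\Lambda_P^F(\pm c)$. Since $\widetilde P\ll P\sim Q_{\pm c}$, we have $\widetilde P\ll Q_{\pm c}$ with density
\begin{equation}
\frac{d\widetilde P}{dQ_{\pm c}}=\rho\,\frac{E_P[e^{\pm cF}]}{e^{\pm cF}}.
\end{equation}
Taking logarithms and integrating against $\widetilde P$ gives the chain-rule-style identity
\begin{equation}
R(\widetilde P\|Q_{\pm c})=R(\widetilde P\|P)+\Lambda_P^F(\pm c)\mp c\,E_{\widetilde P}[F].
\end{equation}
All terms are finite (using $F\in L^1(\widetilde P)$ and the finiteness of the cumulant generating function), so the manipulation is legitimate.

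Invoking $R(\widetilde P\|Q_{\pm c})\ge 0$ and rearranging yields
\begin{equation}
\pm c\,E_{\widetilde P}[F]\le \Lambda_P^F(\pm c)+R(\widetilde P\|P).
\end{equation}
Dividing by $c>0$ and taking the infimum over $c>0$ produces \eqref{goal_oriented_bound2}. The only subtle point — and the natural place for a reader to stumble — is the bookkeeping that justifies splitting $R(\widetilde P\|Q_{\pm c})$ into the three finite pieces above; this is handled cleanly by first restricting to $c$ with $\Lambda_P^F(\pm c)<\infty$ (so $e^{\pm cF}\in L^1(P)$ and $Q_{\pm c}$ exists) and observing that if no such $c$ exists the infimum on the right is trivially $+\infty$. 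The convention $-\infty+\infty\equiv\infty$ stipulated in the paper then covers the remaining edge cases.
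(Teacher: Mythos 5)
Your proof is correct and follows essentially the same route as the paper's: you construct the tilted measure $Q_{\pm c}$ and invoke $R(\widetilde P\|Q_{\pm c})\ge 0$, which is precisely a direct, self-contained proof of the one-sided Gibbs variational inequality that the paper instead cites as a lemma (Prop.~4.5.1 in Dupuis--Ellis) and evaluates at $Q=\widetilde P$. The underlying mechanism---exponential tilting plus non-negativity of relative entropy---is identical, and your reduction to the case $R(\widetilde P\|P)<\infty$ and $\Lambda_P^F(\pm c)<\infty$ correctly handles the integrability bookkeeping.
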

\begin{proof}

 The key tool  is the Gibbs variational principle, which relates the cumulant generating function and relative entropy as follows: If  $G:\Omega\to\mathbb{R}$ is measurable and there exists $c_0>0$ such that $E_P[e^{c_0G}] < \infty$ then 
\begin{equation}\label{gibbs}
\log E_P[e^G] = \sup_{Q: R(Q\|P)< \infty} \left\{  E_{Q}[G]  - R(Q \| P) \right\} \,.
\end{equation}  
A (slightly weaker) version of this result is found in Proposition 4.5.1 in \cite{dupuis2011weak}, where it is assumed that $G$ is bounded above. However, the above variant can then easily be obtained by applying \req{gibbs} to $G_n\equiv G\wedge n$ and then taking $n\to\infty$ via the monotone convergence theorem (the assumption $E_P[e^{c_0G}] < \infty$ is necessary to ensure that $E_Q[G^+]<\infty$ for all $Q$ with $R(Q\|P)<\infty$, where $G^+\equiv \max\{G,0\}$ is the positive part of $G$).

Now fix $c>0$ and consider the case of the upper signs in \req{goal_oriented_bound2} (the proof for the lower signs proceeds similarly).   If $R(\widetilde{P}\|P)=\infty$ or $\Lambda_P^{{F}}(c)=\infty$ then the claimed bound (at   $c$) is trivial, so suppose not. In this case, we have $E_P[e^{c{F}}]<\infty$ and so we can apply  \req{gibbs} to $G\equiv c{F}$.  Bounding the supremum in \req{gibbs} below by the value at $Q=\widetilde{P}$, solving for $E_{\widetilde{P}}[c{F}]$, and then dividing by $c$ yields the  result.
\end{proof}

\section{Tightness of the Bounds from Theorem \ref{thm:goal_info_ineq3}}\label{app:tight}

The bounds (\ref{eq:goal_info_ineq3}) are  tight, in the following sense:
\begin{corollary}\label{cor:tightness}
Let $F$ and $\mathcal{G}$ be as in Theorem \ref{thm:goal_info_ineq3} and assume that $R(\widetilde{P}|_{\mathcal{G}}\| P|_{\mathcal{G}})<\infty$ and $c\mapsto\log E_{\widetilde{P}}\left[\exp\left(c F\right)\right]$ is finite on a neighborhood of $0$. Then there exists a $\mathcal{G}$-measurable $G$ such that \req{eq:goal_info_ineq3} is an equality.
 \end{corollary}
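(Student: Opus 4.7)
The plan is to take $G$ to be the log Radon--Nikodym derivative of $\widetilde P|_{\mathcal G}$ with respect to $P|_{\mathcal G}$. Since $R(\widetilde P|_{\mathcal G}\|P|_{\mathcal G})<\infty$ implies $\widetilde P|_{\mathcal G}\ll P|_{\mathcal G}$, the density $\phi\equiv d\widetilde P|_{\mathcal G}/dP|_{\mathcal G}$ exists and is strictly positive $\widetilde P$-a.s.; I would define $G\equiv\log\phi$ on $A\equiv\{\phi>0\}$ and $G\equiv 0$ on the $\widetilde P$-null set $A^c$. Then $G$ is $\mathcal G$-measurable and real-valued; moreover, the elementary bound $x\log(1/x)\leq 1/e$ on $(0,1]$ combined with the change-of-measure identity $E_{\widetilde P}[h]=E_P[\phi h]$ for $\mathcal G$-measurable $h$ gives $E_{\widetilde P}[G^-]\leq 1/e$, so $G\in L^1(\widetilde P)$ and $E_{\widetilde P}[G]=R(\widetilde P|_{\mathcal G}\|P|_{\mathcal G})$. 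In particular the relative-entropy constraint of Theorem \ref{thm:goal_info_ineq3} is satisfied with equality.

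Next, using that $F$ is $\mathcal G$-measurable and interpreting $e^G=0$ on $A^c$ (equivalently, $\log 0\equiv -\infty$, which is harmless because $A^c$ is $\widetilde P$-null), one computes
\[E_P\!\left[\exp(\pm c F+G)\right]=E_P\!\left[\phi\, e^{\pm c F}\right]=E_{\widetilde P}[e^{\pm c F}]=\exp\!\left(\Lambda_{\widetilde P}^F(\pm c)\right),\]
so the right-hand side of \req{eq:goal_info_ineq3} collapses to $\inf_{c>0}c^{-1}\Lambda_{\widetilde P}^F(\pm c)$. The hypothesis that $\Lambda_{\widetilde P}^F$ is finite in a neighborhood of $0$ allows differentiation under the expectation, giving $(\Lambda_{\widetilde P}^F)'(0)=E_{\widetilde P}[F]$; convexity of $\Lambda_{\widetilde P}^F$ (H\"older) together with $\Lambda_{\widetilde P}^F(0)=0$ then yields $c^{-1}\Lambda_{\widetilde P}^F(\pm c)\geq\pm E_{\widetilde P}[F]$ for all $c>0$, while the same value is attained as $c\to 0^+$. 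Hence $\inf_{c>0}c^{-1}\Lambda_{\widetilde P}^F(\pm c)=\pm E_{\widetilde P}[F]$, and combined with Theorem \ref{thm:goal_info_ineq3} this delivers the claimed equality.

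The main subtlety I anticipate is the handling of the set $\{\phi=0\}$. A strictly real-valued $G$ with $e^G=\phi$ pointwise exists only when $P(\{\phi=0\})=0$; in the general case, the identity $E_P[e^{\pm cF+G}]=E_{\widetilde P}[e^{\pm cF}]$ either holds under the convention $e^{-\infty}=0$ (so $G$ takes the value $-\infty$ on an exceptional $\widetilde P$-null set), or is realized as the limit, over the admissible real-valued approximants $G_N\equiv\log\phi\vee(-N)$, of $\inf_{c>0}c^{-1}\log E_P[e^{\pm cF+G_N}]$ as $N\to\infty$. Either interpretation confirms that the optimization over the admissible class of $G$'s in Theorem \ref{thm:goal_info_ineq3} attains $\pm E_{\widetilde P}[F]$, which is the precise sense in which the bound \req{eq:goal_info_ineq3} is tight.
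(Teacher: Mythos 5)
Your proof is correct and takes essentially the same approach as the paper: choose $G=\log(d\widetilde{P}|_{\mathcal{G}}/dP|_{\mathcal{G}})$, use the change of measure to reduce the right-hand side to $\inf_{c>0}c^{-1}\Lambda_{\widetilde{P}}^F(\pm c)$, and identify that infimum with $\pm E_{\widetilde{P}}[F]$ via the derivative of the CGF at $0$. You supply slightly more detail than the paper on two minor technical points (integrability of $G^-$ and the set $\{\phi=0\}$), but the argument is the same.
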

 \begin{proof}

 Let $G=\log( d\widetilde{P}|_{\mathcal{G}}/dP|_{\mathcal{G}})$.  The assumptions,  together with  \req{eq:goal_info_ineq3}, imply $G\in L^1(\widetilde{P})$ and we have
\begin{align}
&\pm E_{\widetilde P}[F]\leq \inf_{c>0}\left\{\frac{1}{c} \log E_{{P}}\left[\exp\left(\pm c F+G\right)\right]\right\}=\lim_{c\searrow 0}\frac{1}{c} \log E_{\widetilde{P}}\left[\exp\left(\pm c F\right)\right]\\
=&\frac{d}{dc}\bigg|_{c=0}\!\!\!\!\!\log E_{\widetilde P}\left[\exp\left(\pm c F\right)\right]=\pm E_{\widetilde{P}}[F].\notag
\end{align}
Note that in the first line, we used the fact that $c\mapsto \Lambda_P^F(\pm c)/c$ are increasing on $c>0$, as can be proven using H{\"o}lder's inequality.
\end{proof}

\begin{remark}
   Again, we emphasize that the choice $G=\log( d\widetilde{P}|_{\mathcal{G}}/dP|_{\mathcal{G}})$  is rarely  tractable and so Corollary \ref{cor:tightness} is not useful in practice.  Rather, it shows that any loss of tightness in the bound (\ref{eq:goal_info_ineq3}) is the result of one's  choice of $\mathcal{G}$ and $G$, and is not intrinsic to the use of that bound.
\end{remark}

\section{Tools for Computing the Goal-Oriented Relative Entropy}\label{sec:rel_ent}
In this appendix, we present several lemmas that are generally useful when computing the goal-oriented relative entropy. First, we address the question of processes with  different initial distributions:
\begin{lemma}\label{lemma:chain_rule}
Suppose that the base and alternative models are of the form
\begin{align}
P=P^\mu=\int P^x(\cdot) \mu(dx),\,\,\,\,\,\,\widetilde{P}=\widetilde{P}^{\widetilde{\mu}}=\int \widetilde{P}^x(\cdot) \widetilde{\mu}(dx)
\end{align}
for some probability kernels, $P^x$ and $\widetilde{P}^x$, from $(\mathcal{X},\mathcal{M})$ to $(\Omega,\mathcal{F})$, and some probability measures (e.x. initial distributions), $\mu$ and $\widetilde{\mu}$, on $(\mathcal{X},\mathcal{M})$. 

 In this case, one can use the chain-rule for relative entropy (see, for example, \cite{dupuis2011weak}) to bound
\begin{align}\label{eq:rel_ent_ics}
R(F_*\widetilde{P}^{\widetilde{\mu}}\|F_*P^\mu)\leq  R(\widetilde{\mu}\|\mu)+\int R(F_*\widetilde{P}^x\| F_*P^x)\widetilde{\mu}(dx).
\end{align}
\end{lemma}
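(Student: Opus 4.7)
\textbf{Proof plan for Lemma \ref{lemma:chain_rule}.} The strategy is to lift the problem to a product space, apply the chain rule of relative entropy there, and then push down via the data processing inequality on a projection. The key observation is that the right-hand side of \eqref{eq:rel_ent_ics} involves the \emph{pushforward} kernels $F_*\widetilde{P}^x$ and $F_*P^x$ rather than the full kernels, so the lifting must be done at the level of the pushforwards themselves.

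Concretely, let $(\mathcal{Y},\mathcal{N})$ denote the codomain of $F$ and consider the two probability measures on the product $(\mathcal{X}\times\mathcal{Y},\mathcal{M}\otimes\mathcal{N})$ defined by
\begin{align}
\overline{P}(dx,dy) = \mu(dx)(F_*P^x)(dy), \qquad \overline{\widetilde{P}}(dx,dy) = \widetilde{\mu}(dx)(F_*\widetilde{P}^x)(dy).
\end{align}
Measurability of $x\mapsto F_*P^x$ and $x\mapsto F_*\widetilde{P}^x$ as probability kernels follows from the fact that $P^x, \widetilde{P}^x$ are probability kernels and $F$ is measurable, so these objects are well-defined probability measures. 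The chain rule for relative entropy (e.g., Theorem 2.6 in \cite{dupuis2011weak}) then gives
\begin{align}
R(\overline{\widetilde{P}}\,\|\,\overline{P}) = R(\widetilde{\mu}\|\mu) + \int R(F_*\widetilde{P}^x\|F_*P^x)\,\widetilde{\mu}(dx).
\end{align}

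Next, I would apply the projection $\pi_2:\mathcal{X}\times\mathcal{Y}\to\mathcal{Y}$, $\pi_2(x,y)=y$. A short direct computation (integrating out $x$ using Fubini and the definitions of $P^\mu$ and $\widetilde{P}^{\widetilde\mu}$) shows that $(\pi_2)_*\overline{P} = F_*P^\mu$ and $(\pi_2)_*\overline{\widetilde{P}} = F_*\widetilde{P}^{\widetilde{\mu}}$. The data processing inequality (as recalled in the proof of Theorem \ref{thm:obs_adap_info_ineq}, part 2) then yields
\begin{align}
R(F_*\widetilde{P}^{\widetilde{\mu}}\,\|\,F_*P^\mu) = R\bigl((\pi_2)_*\overline{\widetilde{P}}\,\|\,(\pi_2)_*\overline{P}\bigr) \leq R(\overline{\widetilde{P}}\,\|\,\overline{P}),
\end{align}
which combined with the chain rule identity above gives exactly \eqref{eq:rel_ent_ics}.

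No step is particularly hard; the only subtlety is making sure one lifts to the product with the \emph{pushforward} kernels rather than with the raw kernels $P^x,\widetilde P^x$, since lifting with the raw kernels would yield the strictly weaker bound with $R(\widetilde{P}^x\|P^x)$ in place of $R(F_*\widetilde{P}^x\|F_*P^x)$ on the right-hand side. Beyond this conceptual point, the proof is a mechanical application of two standard facts about relative entropy.
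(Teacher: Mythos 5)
Your proof is correct and fills in the details that the paper only gestures at: the paper gives no explicit proof, merely noting that "one can use the chain-rule for relative entropy," and your argument is the natural execution of that remark, combining the chain rule on the lifted product space $\mathcal{X}\times\mathcal{Y}$ with the data-processing inequality under the second-coordinate projection. Your observation that one must lift using the \emph{pushforward} kernels $F_*P^x$, $F_*\widetilde{P}^x$ (rather than the raw kernels $P^x$, $\widetilde{P}^x$, which would only give the weaker bound with $R(\widetilde{P}^x\|P^x)$ on the right) is precisely the small but necessary technical point that the paper's one-line remark elides.
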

\begin{remark}
 Note that without further assumptions, \req{eq:rel_ent_ics} is not an equality.  For example, let $\nu$ be a probability measure on $\mathbb{R}$,  $F=\pi_2$ (the projection onto the second component in $\mathbb{R}\times\mathbb{R}$), and for $x\in\mathbb{R}$ define  $P^x=\widetilde P^x=\delta_x\times \nu$.  Then for $\mu\neq\widetilde\mu$ we have $P^\mu=\mu\times\nu$, $\widetilde{P}^{\widetilde{\mu}}=\widetilde{\mu}\times \nu$ and
\begin{align}
&R(F_* \widetilde{P}^{\widetilde{\mu}}\|F_*P^\mu)=R(\nu\|\nu)=0<R(\widetilde\mu\|\mu)+\int R(F_* \widetilde{P}^x\|F_*P^x)\widetilde{\mu}(dx).
\end{align}
Intuitively, inequality in \req{eq:rel_ent_ics} arises when the QoI $F$ `forgets' some of the information regarding the discrepancy between the initial condition, as the right-hand-side in \req{eq:rel_ent_ics} incorporates the full discrepancy.
\end{remark}
\begin{remark}
Recall that $R(F_*\widetilde{P}^{\widetilde{\mu}}\|F_*P^\mu)=R(\widetilde{P}^{\widetilde{\mu}}|_{\sigma(F)}\|P^\mu|_{\sigma(F)})$, where $\sigma(F)$ is the sigma algebra generated by $F$. If $F$ is measurable with respect to a sub sigma algebra, $\mathcal{G}$, then $\sigma(F)\subset\mathcal{G}$ and one can weaken the bound via $R(F_*\widetilde{P}^x\| F_*P^x)\leq R(\widetilde{P}^x|_{\mathcal{G}}\| P^x|_{\mathcal{G}})$, as discussed in Theorem \ref{thm:obs_adap_info_ineq}.
\end{remark}

The main tool we use to compute the relative entropy up to a stopping time, Lemma \ref{lemma:radon_mart} below, requires one to work with bounded stopping times.  This limitation can often be overcome by taking limits. For example:
\begin{lemma}\label{lemma:lim_rel_ent}
Suppose that one is working within the framework of Assumption \ref{assump:time_dep_QoI} and one of the following two conditions holds:
\begin{enumerate}
\item $\tau$ is finite $P$-a.s. and $\widetilde{P}$-a.s.
\item $F_n\to F_\infty$ as $n\to\infty$, $n\in\mathbb{Z}^+$, both $P$ and $\widetilde{P}$-a.s.
\end{enumerate}
Then 
\begin{align}
R( (F_\tau)_*\widetilde{P}\|(F_\tau)_*P)\leq& \liminf_{n\to\infty} R((F_{\tau\wedge n})_*\widetilde{P}\|(F_{\tau\wedge n})_*P)\\
\leq&\liminf_{n\to\infty} R(\widetilde{P}|_{\mathcal{F}_{\tau\wedge n}}\|P|_{\mathcal{F}_{\tau\wedge n}}).\notag
\end{align}
\end{lemma}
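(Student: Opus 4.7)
The plan is to decompose the two inequalities and handle them separately. The second inequality is the easier one: for every $n$, $F_{\tau \wedge n}$ is $\mathcal{F}_{\tau \wedge n}$-measurable by Lemma \ref{lemma:F_tau}, hence $\sigma(F_{\tau \wedge n}) \subset \mathcal{F}_{\tau \wedge n}$, and the data-processing inequality (part 2 of Theorem \ref{thm:obs_adap_info_ineq}) gives
\begin{align}
R((F_{\tau \wedge n})_* \widetilde{P} \,\|\, (F_{\tau \wedge n})_* P) \leq R(\widetilde{P}|_{\mathcal{F}_{\tau \wedge n}} \,\|\, P|_{\mathcal{F}_{\tau \wedge n}}).
\end{align}
Taking $\liminf_{n\to\infty}$ delivers the second inequality in the lemma.

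For the first inequality, the plan is to reduce matters to lower semicontinuity of relative entropy. I would show that $F_{\tau \wedge n} \to F_\tau$ almost surely under both $P$ and $\widetilde{P}$, which then gives weak convergence of the pushforwards $(F_{\tau \wedge n})_* \widetilde{P} \to (F_\tau)_* \widetilde{P}$ and $(F_{\tau \wedge n})_* P \to (F_\tau)_* P$ on the Polish space $\overline{\mathbb{R}}$ (via continuous mapping / Portmanteau). The standard lower semicontinuity property of the KL divergence under joint weak convergence of both arguments on a Polish space (see, e.g., Lemma 1.4.3 in \cite{dupuis2011weak}) then yields
\begin{align}
R((F_\tau)_* \widetilde{P} \,\|\, (F_\tau)_* P) \leq \liminf_{n \to \infty} R((F_{\tau \wedge n})_* \widetilde{P} \,\|\, (F_{\tau \wedge n})_* P),
\end{align}
which is the first inequality.

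It remains to establish the almost-sure convergence $F_{\tau \wedge n} \to F_\tau$ under each hypothesis. Under condition (1), for almost every $\omega$ we have $\tau(\omega) < \infty$, so there exists $N(\omega)$ with $\tau(\omega) \leq N(\omega)$; then $\tau \wedge n = \tau$ for all $n \geq N(\omega)$, and therefore $F_{\tau \wedge n}(\omega) = F_\tau(\omega)$ eventually. Under condition (2), the same eventual-equality argument applies on $\{\tau < \infty\}$, while on $\{\tau = \infty\}$ one has $F_{\tau \wedge n}(\omega) = F_n(\omega) \to F_\infty(\omega) = F_\tau(\omega)$ by the assumed convergence, which holds $P$- and $\widetilde{P}$-a.s.

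The only real delicacy is invoking lower semicontinuity in the form that allows both arguments of $R$ to vary; the standard statement for Polish spaces suffices here, and no uniform integrability is required because the conclusion is a $\liminf$ bound (the right-hand side is allowed to be $+\infty$). I do not expect any obstacle beyond confirming that the almost-sure convergence genuinely holds on the full-measure set under whichever hypothesis is assumed, as outlined above.
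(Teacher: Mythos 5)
Your proposal is correct and takes essentially the same approach as the paper's: establish almost-sure convergence of $F_{\tau\wedge n}$ to $F_\tau$ under both measures, pass to weak convergence of the pushforwards on $\overline{\mathbb{R}}$, and invoke lower semicontinuity of relative entropy for the first inequality, with the data-processing inequality (Theorem~\ref{thm:obs_adap_info_ineq}, part 2) giving the second. The paper's own proof is a two-line sketch of exactly these steps; your version simply fills in the case analysis for the a.s.\ convergence under conditions (1) and (2) and spells out the data-processing step.
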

\begin{proof}

Either condition implies that $(F_{\tau\wedge n})_*\widetilde{P}\to (F_{\tau})_*\widetilde{P}$ and $(F_{\tau\wedge n})_*P\to (F_\tau)_*P$ weakly as $n\to\infty$.  The result then follows from lower-semicontinuity of relative entropy (see \cite{dupuis2011weak}).
\end{proof}

The general tool we use to compute relative entropy up to a stopping time is the optional sampling theorem (see, for example, \cite{karatzas2014brownian}), as summarized by the following lemma:
\begin{lemma}\label{lemma:radon_mart}
Let $P,\widetilde{P}$ be probability measures on the filtered space $(\Omega,\mathcal{F},\{\mathcal{F}_t\}_{t\in\mathcal{T}})$ such that $\widetilde{P}|_{\mathcal{F}_t}\ll P|_{\mathcal{F}_t}$  for all $t$. 

Then $\rho_t=\frac{d\widetilde{P}|_{\mathcal{F}_t}}{dP|_{\mathcal{F}_t}}$ is an $(\mathcal{F}_t,P)$-martingale.  In the continuous-time case, we also assume that we have a right-continuous version of $\rho_t$.

Given this, for any stopping time, $\tau$, we have $\frac{d\widetilde{P}|_{\mathcal{F}_{\tau\wedge t}}}{dP|_{\mathcal{F}_{\tau\wedge t}}}=E(\rho_t|\mathcal{F}_{\tau\wedge t})=\rho_{\tau\wedge t}$ and
\begin{align}
R(\widetilde{P}|_{\mathcal{F}_{\tau\wedge t}}\|P|_{\mathcal{F}_{\tau\wedge t}})=&E_{\widetilde{P}}\left[\log(\rho_{\tau\wedge t})\right].
\end{align}
\end{lemma}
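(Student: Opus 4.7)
The plan is to prove the three claims sequentially: first that $\rho_t$ is a martingale, then the optional-sampling identification of $\rho_{\tau\wedge t}$ as the Radon--Nikodym derivative on $\mathcal{F}_{\tau\wedge t}$, and finally the relative-entropy formula as a direct consequence.

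For the martingale property, I would fix $s\leq t$ in $\mathcal{T}$ and an arbitrary $A\in\mathcal{F}_s$. By the definition of $\rho_s$ and $\rho_t$ as Radon--Nikodym derivatives on the respective restricted spaces, together with the fact that $A\in\mathcal{F}_s\subset\mathcal{F}_t$, I have
\begin{equation*}
E_P[1_A\rho_t]=\widetilde{P}(A)=E_P[1_A\rho_s],
\end{equation*}
which gives $E_P(\rho_t\mid\mathcal{F}_s)=\rho_s$. Integrability of $\rho_t$ under $P$ is immediate since $E_P[\rho_t]=\widetilde{P}(\Omega)=1$.

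For the stopping-time identification, I would apply the optional sampling theorem to the nonnegative martingale $\rho$ and the bounded stopping time $\tau\wedge t$ (bounded by $t\in\mathcal{T}$); this yields $E_P(\rho_t\mid\mathcal{F}_{\tau\wedge t})=\rho_{\tau\wedge t}$ $P$-a.s. In the continuous-time setting, this is exactly where the assumed right-continuous version of $\rho$ is needed. Now for any $A\in\mathcal{F}_{\tau\wedge t}$, note $A\in\mathcal{F}_t$ (from the definition of $\mathcal{F}_{\tau\wedge t}$, since $A=A\cap\{\tau\wedge t\leq t\}\in\mathcal{F}_t$), hence
\begin{equation*}
\widetilde{P}(A)=E_P[1_A\rho_t]=E_P\bigl[1_A\,E_P(\rho_t\mid\mathcal{F}_{\tau\wedge t})\bigr]=E_P[1_A\rho_{\tau\wedge t}].
\end{equation*}
Since $\rho_{\tau\wedge t}$ is $\mathcal{F}_{\tau\wedge t}$-measurable and nonnegative, uniqueness of the Radon--Nikodym derivative on $(\Omega,\mathcal{F}_{\tau\wedge t})$ gives $d\widetilde{P}|_{\mathcal{F}_{\tau\wedge t}}/dP|_{\mathcal{F}_{\tau\wedge t}}=\rho_{\tau\wedge t}$; in particular $\widetilde{P}|_{\mathcal{F}_{\tau\wedge t}}\ll P|_{\mathcal{F}_{\tau\wedge t}}$.

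For the relative-entropy formula, I would plug this density into the definition \eqref{rel_ent_def}: since $\log(\rho_{\tau\wedge t})$ is $\mathcal{F}_{\tau\wedge t}$-measurable, its expectation under $\widetilde{P}|_{\mathcal{F}_{\tau\wedge t}}$ equals its expectation under $\widetilde{P}$, and
\begin{equation*}
R(\widetilde{P}|_{\mathcal{F}_{\tau\wedge t}}\|P|_{\mathcal{F}_{\tau\wedge t}})=E_{\widetilde{P}|_{\mathcal{F}_{\tau\wedge t}}}[\log\rho_{\tau\wedge t}]=E_{\widetilde{P}}[\log\rho_{\tau\wedge t}],
\end{equation*}
which is the asserted identity. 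I expect the main technical obstacle to be the continuous-time invocation of optional sampling: one must be careful that the right-continuous modification of $\rho$ is genuinely a martingale (not merely a version of one) and that $\tau\wedge t$ being bounded allows the use of the theorem without requiring uniform integrability beyond what is already supplied by $E_P[\rho_t]=1$. The discrete-time case ($\mathcal{T}=\mathbb{Z}_0$) is automatic and requires no regularity hypothesis.
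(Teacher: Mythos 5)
Your proof is correct and follows exactly the route the paper gestures at (the paper states the lemma without proof, simply citing the optional sampling theorem): you establish the density-process martingale property, apply optional sampling to the bounded stopping time $\tau\wedge t$, identify $\rho_{\tau\wedge t}$ as the Radon--Nikodym derivative on $\mathcal{F}_{\tau\wedge t}$ via uniqueness, and then unwind the definition of relative entropy. The subtleties you flag --- the observation that $A\in\mathcal{F}_{\tau\wedge t}$ implies $A\in\mathcal{F}_t$, the need for right-continuity to apply optional sampling and to ensure $\rho_{\tau\wedge t}$ is $\mathcal{F}_{\tau\wedge t}$-measurable, and that boundedness of $\tau\wedge t$ obviates any extra uniform-integrability hypothesis --- are precisely the right ones.
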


Using Lemma \ref{lemma:radon_mart}, the relative entropy up to a stopping time for various types of Markov processes can be computed via standard techniques;  see Appendix \ref{sec:markov_rel_ent}.

\section{Relative Entropy up to a Stopping Time for Markov Processes}\label{sec:markov_rel_ent}

Below, we use Lemma \ref{lemma:radon_mart} to give explicit formulas for the relative entropy up to a stopping time for several classes of Markov processes in both discrete and continuous time.
\subsection{Discrete-Time Markov Processes}
Let $\mathcal{X}$ be a Polish space, $p(x,dy)$ and $\widetilde{p}(x,dy)$ be transition probabilities on $\mathcal{X}$ and $\mu$, $\widetilde{\mu}$ be probability measures on $\mathcal{X}$.  Let $P^\mu$ and $\widetilde{P}^{\widetilde{\mu}}$ be the induced probability measures on $\Omega\equiv\prod_{i=0}^\infty\mathcal{X}$  with transition probabilities $p$ and $\widetilde p$ respectively, and initial distribution $\mu$ and $\widetilde{\mu}$.  Let $\pi_n:\Omega\to\mathcal{X}$ be the natural projections,  $\mathcal{F}_n=\sigma(\pi_m:m\leq n)$, and $\tau$ be a $\mathcal{F}_n$-stopping time.

We have the following bound on the relative entropy,  via the optional sampling theorem:
\begin{lemma}
Suppose that $\widetilde p(x,dy)=g(x,y)p(x,dy)$ for some measurable $g:\mathcal{X}\times\mathcal{X}\to[0,\infty)$ and let $N\geq 0$. Then
\begin{align}
R(\widetilde{P}^x|_{\mathcal{F}_{\tau\wedge N}}\|P^x|_{\mathcal{F}_{\tau\wedge N}})=&\widetilde{E}^x\left[\log\left(\prod_{i=1}^{\tau\wedge N} g(\pi_{i-1},\pi_i)\right)\right].
\end{align}
\end{lemma}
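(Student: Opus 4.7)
The plan is to apply Lemma \ref{lemma:radon_mart} to an explicitly computed Radon--Nikodym derivative. The key candidate is the likelihood ratio
\[
\rho_n \equiv \prod_{i=1}^n g(\pi_{i-1},\pi_i),
\]
and the proof reduces to three steps: (i) identify $\rho_n$ as the density of $\widetilde{P}^x|_{\mathcal{F}_n}$ w.r.t.\ $P^x|_{\mathcal{F}_n}$, (ii) check the martingale hypothesis of Lemma \ref{lemma:radon_mart}, and (iii) invoke that lemma with the bounded stopping time $\tau\wedge N$.

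First, I would verify the density claim by testing against cylinder functions. For any bounded measurable $f:\mathcal{X}^{n+1}\to\mathbb{R}$, the definition of $\widetilde{P}^x$ via the transition kernel $\widetilde{p}$ and the assumed factorization $\widetilde{p}(y,dz)=g(y,z)p(y,dz)$ yield, by iterated integration,
\[
E_{\widetilde{P}^x}[f(\pi_0,\ldots,\pi_n)]
= \int \cdots \int f(x,y_1,\ldots,y_n)\,\prod_{i=1}^n g(y_{i-1},y_i)\,p(y_{i-1},dy_i),
\]
with $y_0 \equiv x$, which is exactly $E_{P^x}[f(\pi_0,\ldots,\pi_n)\,\rho_n]$. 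Since $\mathcal{F}_n$ is generated by $(\pi_0,\ldots,\pi_n)$, this establishes $d\widetilde{P}^x|_{\mathcal{F}_n}/dP^x|_{\mathcal{F}_n}=\rho_n$ and in particular $\widetilde{P}^x|_{\mathcal{F}_n}\ll P^x|_{\mathcal{F}_n}$ as required to apply Lemma \ref{lemma:radon_mart}.

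Next, $\rho_n$ is automatically an $(\mathcal{F}_n,P^x)$-martingale: it is adapted, and
\[
E_{P^x}[\rho_{n+1}\mid \mathcal{F}_n] = \rho_n \int g(\pi_n,y)\,p(\pi_n,dy) = \rho_n \cdot \widetilde{p}(\pi_n,\mathcal{X}) = \rho_n,
\]
using that $\widetilde{p}$ is a probability kernel. (In discrete time no right-continuity issue arises.) Finally, since $\tau\wedge N$ is a bounded stopping time, Lemma \ref{lemma:radon_mart} gives
\[
\frac{d\widetilde{P}^x|_{\mathcal{F}_{\tau\wedge N}}}{dP^x|_{\mathcal{F}_{\tau\wedge N}}} = \rho_{\tau\wedge N} = \prod_{i=1}^{\tau\wedge N} g(\pi_{i-1},\pi_i),
\]
and then the definition of relative entropy in \eqref{rel_ent_def} yields the stated formula.

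There is no substantive obstacle here; everything is a routine verification plus one appeal to Lemma \ref{lemma:radon_mart}. The only mild point of care is step (i), where one must be explicit that the cylinder-class identity suffices to characterize the density on $\mathcal{F}_n$ (a monotone-class or $\pi$--$\lambda$ argument). One also tacitly assumes $g(\pi_{i-1},\pi_i)>0$ on the support of $\widetilde{P}^x$ so that the $\log$ is well defined in $(-\infty,\infty]$; this is automatic from $\widetilde{p}\ll p$ via $g$.
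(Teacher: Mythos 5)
Your proof is correct and follows exactly the route the paper indicates (the paper gives no detailed proof, only the remark that the lemma follows "via the optional sampling theorem," i.e.\ Lemma \ref{lemma:radon_mart}): you identify $\rho_n=\prod_{i=1}^n g(\pi_{i-1},\pi_i)$ as the Radon--Nikodym derivative on $\mathcal{F}_n$ via a cylinder-function computation, observe that it is an $(\mathcal{F}_n,P^x)$-martingale, and then apply Lemma \ref{lemma:radon_mart} with the bounded stopping time $\tau\wedge N$. The side remarks (monotone class to pass from cylinders to all of $\mathcal{F}_n$, and that $g(\pi_{i-1},\pi_i)>0$ holds $\widetilde{P}^x$-a.s.\ because $\widetilde p(y,\{z:g(y,z)=0\})=0$) are both accurate and show appropriate care.
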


\subsection{Continuous-Time Markov Chains}

Let $\mathcal{X}$ be a countable set, $P$ and $\widetilde{P}$ be probability measures on $(\Omega,\mathcal{F})$, and $X_t:\Omega\to\mathcal{X}$, $t\in[0,\infty)$, such that $(\Omega,\mathcal{F},P,X_t)$ and  $(\Omega,\mathcal{F},\widetilde{P},X_t)$ are continuous-time Markov chains (CTMCs) with transition probabilities $a(x,y)$, $\widetilde{a}(x,y)$, jump rates $\lambda(x)$, $\widetilde{\lambda}(x)$, and initial distributions $\mu$, $\widetilde{\mu}$ respectively.  Let $\mathcal{F}_t$ be the natural filtration for $X_t$, and $X^J_n$ be the embedded jump chain with jump times $J_n\in[0,\infty]$.  We assume $J_n(\omega)\to\infty$ as $n\to\infty$ for all $\omega$.

Suppose $\widetilde{\mu}\ll\mu$ and  that whenever $a(x,y)=0$ we also have $\widetilde{a}(x,y)=0$ (note that this also implies that $\widetilde\lambda(x)=0$ whenever $\lambda(x)=0$).  Then  $\widetilde{P}|_{\mathcal{F}_t}\ll P|_{\mathcal{F}_t}$ for all $t\geq 0$ and
\begin{align}
\frac{d \widetilde{P}|_{\mathcal{F}_t}}{dP|_{\mathcal{F}_t}}=\frac{\widetilde{\mu}(X_0)}{\mu(X_0)}\exp\left(\int_0^t \log\left(\frac{\widetilde{\lambda}(X_{s^-})\widetilde{a}(X_{s^-},X_{s})}{\lambda(X_{s^-})a(X_{s^-},X_{s})} \right)N(ds)-\int_0^t \widetilde{\lambda}(X_s)-\lambda(X_s)ds\right),
\end{align}
where $N_t=\max\{n: J_n\leq t\}$ is the number of jumps up to time $t$ (see, for example, \cite{heinz2015stochastic}). Note that in defining the functions $\widetilde{\mu}(x)/\mu(x)$ and $\widetilde{\lambda}(x)\widetilde{a}(x,y)/(\lambda(x)a(x,y))$, we  use the convention $0/0\equiv 1$.

Lemma  \ref{lemma:radon_mart} then yields the following:
\begin{lemma}
\begin{align}
&R(\widetilde{P}|_{\mathcal{F}_{\tau\wedge t}}\|P|_{\mathcal{F}_{\tau\wedge t}})\\
=&R(\widetilde{\mu}\|\mu)+E_{\widetilde{P}}\left[\int_0^{\tau\wedge t} \log\left(\frac{\widetilde{\lambda}(X_{s^-})\widetilde{a}(X_{s^-},X_{s})}{\lambda(X_{s^-})a(X_{s^-},X_{s})} \right)N(ds)-\int_0^{\tau\wedge t} \widetilde{\lambda}(X_s)-\lambda(X_s)ds\right].\notag
\end{align}

\end{lemma}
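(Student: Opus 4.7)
The plan is to apply Lemma \ref{lemma:radon_mart} directly to the Radon-Nikodym derivative process $\rho_t = d\widetilde{P}|_{\mathcal{F}_t}/dP|_{\mathcal{F}_t}$ given explicitly in the paragraph preceding the lemma. First I would justify that this $\rho_t$ indeed satisfies the hypotheses of Lemma \ref{lemma:radon_mart}: the absolute continuity on each $\mathcal{F}_t$ is guaranteed by the standing assumptions ($\widetilde{\mu}\ll\mu$ and $\widetilde{a}(x,y)=0$ whenever $a(x,y)=0$), and the explicit formula for $\rho_t$ is piecewise-continuous in $t$ (constant between jump times, with right-continuous jumps at the $J_n$), so it provides a right-continuous version. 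The martingale property is standard in the CTMC literature and can be verified from the Dynkin-type compensator for the jump measure, using the assumption $J_n\to\infty$ almost surely to rule out explosions.

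Next, I would invoke Lemma \ref{lemma:radon_mart} to obtain $d\widetilde{P}|_{\mathcal{F}_{\tau\wedge t}}/dP|_{\mathcal{F}_{\tau\wedge t}} = \rho_{\tau\wedge t}$ and hence
\begin{align}
R(\widetilde{P}|_{\mathcal{F}_{\tau\wedge t}}\|P|_{\mathcal{F}_{\tau\wedge t}}) = E_{\widetilde{P}}[\log \rho_{\tau\wedge t}].\notag
\end{align}
Taking the logarithm of the explicit formula for $\rho_t$ and then evaluating at $\tau\wedge t$ (the integrals up to $t$ restrict naturally to integrals up to $\tau\wedge t$ because of how the density factorizes along the path), I would split the resulting expression into three pieces: the initial-distribution term $\log(\widetilde{\mu}(X_0)/\mu(X_0))$, the jump-contribution integral against $N(ds)$, and the compensator integral $\int_0^{\tau\wedge t}(\widetilde{\lambda}-\lambda)(X_s)\,ds$.

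Finally, I would take $E_{\widetilde{P}}$ of each piece. Since $X_0$ has law $\widetilde{\mu}$ under $\widetilde{P}$, the first term becomes $\int \log(d\widetilde{\mu}/d\mu)\,d\widetilde{\mu} = R(\widetilde{\mu}\|\mu)$, which reproduces the leading term of the claimed formula; the remaining two terms stay inside the single $E_{\widetilde{P}}[\,\cdot\,]$ and match the stated expression exactly. The main obstacle I anticipate is purely technical rather than conceptual: justifying that $\log\rho_{\tau\wedge t}$ is $\widetilde{P}$-integrable so that the expectation can be split term-by-term (a priori only the sum is guaranteed to make sense as an element of $[0,\infty]$ via lower-semicontinuity), and justifying the use of the convention $0/0\equiv 1$ on the $\widetilde{P}$-null set where the denominators vanish. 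Both issues are handled by the absolute-continuity assumptions on $\widetilde{\mu}$, $\widetilde{a}$ and $\widetilde{\lambda}$, together with the standard observation that the integrand against $N(ds)$ is $\widetilde{P}$-a.s.\ finite at each jump.
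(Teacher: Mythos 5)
Your proposal is correct and follows exactly the paper's (implicit) argument: substitute the explicit CTMC Radon--Nikodym derivative into Lemma \ref{lemma:radon_mart}, evaluate at $\tau\wedge t$, take $E_{\widetilde P}[\log\rho_{\tau\wedge t}]$, and split off $E_{\widetilde P}[\log(\widetilde\mu(X_0)/\mu(X_0))]=R(\widetilde\mu\|\mu)$. One small inaccuracy worth fixing: $\rho_t$ is not constant between jump times (the compensator term $\int_0^t(\widetilde\lambda-\lambda)(X_s)\,ds$ makes it decay/grow exponentially between jumps), but it is still cadlag, which is all Lemma \ref{lemma:radon_mart} requires.
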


\subsection{Change of Drift for SDEs}\label{sec:rel_ent_SDE}
Here we consider the case where $P^x$ and $\widetilde P^x$ are the distributions on path-space, $C([0,\infty),\mathbb{R}^n)$, of the solution flows $X_t^x$ and $\widetilde X_t^x$ of a pair of SDEs starting at $x\in\mathbb{R}^n$.  More precisely:
\begin{assumption}\label{assump:SDE} 
Assume:
\begin{enumerate}
\item We have filtered probability spaces, $(M,\mathcal{G}_\infty,\mathcal{G}_t,P)$, $(\widetilde{M},\widetilde{\mathcal{G}}_\infty,\widetilde{\mathcal{G}}_t,\widetilde{P})$ that satisfy the usual conditions \cite{karatzas2014brownian} and are equipped with $\mathbb{R}^m$-valued Wiener processes  $W_t$ and $\widetilde W_t$ respectively (Wiener processes with respect to the respective filtrations).

\item We  have another probability space $(N,\mathcal{N},\nu)$, a  $\mathcal{G}_0$-measurable random quantity $Z:M\to N$, and a  $\widetilde{\mathcal{G}}_0$-measurable random quantity $\widetilde{Z}:\widetilde{M}\to N$, and they are both distributed as $\nu$.

\begin{remark}
Intuitively, we are thinking of $Z$ as representing some outside data, independent of  $\{W_t\}_{t\geq 0}$, that is informing the model.  $\mathcal{G}_t$ can then be taken to be the completed sigma algebra generated by $Z$ and the Wiener process up to time $t$.  Since $Z$ and the Wiener process are independent, $W_t$ is still a Wiener process with respect to the resulting filtration.)
\end{remark}

\item We have $x\in\mathbb{R}^n$ and $\mathbb{R}^n$-valued processes  $X^x_t$ and $\widetilde{X}^x_t$ that are adapted to $\mathcal{G}_t$ and $\widetilde{\mathcal{G}}_t$ respectively, and satisfy the SDEs
\begin{align}
&dX^x_t=b(t,X_t^x,Z)dt+\sigma(t,X_t^x,Z)dW_t,\,\, X^x_0=x,\\
& d\widetilde X^x_t=\widetilde b(t,\widetilde X_t^x,\widetilde{Z})dt+\sigma(t,\widetilde X_t^x,\widetilde{Z})d\widetilde W_t,\,\, \widetilde X^x_0=x,\label{tilde_SDE}
\end{align}
 where  $b:[0,\infty)\times \mathbb{R}^n\times N\to \mathbb{R}^n$, $\sigma:[0,\infty)\times\mathbb{R}^n\times N\to\mathbb{R}^{n\times m}$, and $\beta:[0,\infty)\times\mathbb{R}^n\times N\to \mathbb{R}^m$ are measurable and the modified drift is
 \begin{align}
\widetilde b=b+\sigma\beta.
 \end{align}
\item  $\beta(s,X_s^x)\in L^2_{loc}(W)$ and the following process is a martingale under $P$:
\begin{align}
\rho^x_T\equiv\exp\left(\int_0^T \beta(s,X_s^x,Z)\cdot dW_s-\frac{1}{2}\int_0^T \|\beta(s,X_s^x,Z)\|^2ds\right).
\end{align}
This holds if the Novikov condition is satisfied; see, for example, page 199 of \cite{karatzas2014brownian}.
\item The SDE \req{tilde_SDE} satisfies the following uniqueness in law property for all $T>0$:

Suppose $X^\prime_t$ is a continuous, adapted process on another filtered probability space satisfying the usual conditions, $(M^\prime,\mathcal{G}^\prime_\infty,\mathcal{G}_t^\prime,P^\prime)$, that is also equipped with $\mathcal{G}_t^\prime$-Wiener process $W_t^\prime$ and a $N$-valued $\mathcal{G}^\prime_0$-measurable random quantity, $Z^\prime$, distributed as $\nu$.  Finally, suppose also that  $X^\prime_t$  satisfies
\begin{align}
&X^\prime_t=x+\int_0^t\widetilde{b}(s,X_s^\prime,Z^\prime)ds+\int_0^t\sigma(s,X_s^\prime,Z^\prime)dW^\prime_t
\end{align}
for $0\leq t\leq T$. Then the joint distribution of $(X^\prime|_{[0,T]},Z^\prime)$ equals the joint distribution of $(\widetilde{X}^x|_{[0,T]},\widetilde{Z})$.

\end{enumerate}
\end{assumption}
Given this, we define $P^x=(X^x)_* P$ and $\widetilde P^x=(\widetilde X^x)_*\widetilde P$, i.e., the distributions on path-space,
\begin{align}
(\Omega,\mathcal{F}_\infty,\mathcal{F}_t)\equiv (C([0,\infty),\mathbb{R}^n),\mathcal{B}(C([0,\infty),\mathbb{R}^n)),\sigma(\pi_s,s\leq t)),
\end{align}
where $\pi_t$ denotes evaluation at time $t$ and $C([0,\infty),\mathbb{R}^n)$ is given the topology of uniform convergence on compact sets.   For $T\geq 0$, we let $P^x_T\equiv P^x|_{\mathcal{F}_T}$ and  $\widetilde{P}^x_T\equiv \widetilde{P}^x|_{\mathcal{F}_T}$.

The relative entropy can be computed via Girsanov's theorem, together with Lemma \ref{lemma:radon_mart}:
\begin{lemma}
Let $x\in\mathbb{R}^n$ and suppose  Assumption \ref{assump:SDE} holds. For all $T>0$ we have
\begin{align}
&R(\widetilde P^x|_{\mathcal{F}_{\tau\wedge T}}\|P^x|_{\mathcal{F}_{\tau\wedge T}})\leq\frac{1}{2} E_{\widetilde{P}}\left[\int_0^{\widetilde\sigma_T^x} \|\beta(s,\widetilde{X}^x_s,\widetilde{Z})\|^2ds\right],
\end{align}
where $\widetilde\sigma_T^x=(\tau\circ\widetilde{X}^x)\wedge T$.
In terms of the base process, we have
\begin{align}
&R(\widetilde P^x|_{\mathcal{F}_{\tau\wedge T}}\|P^x|_{\mathcal{F}_{\tau\wedge T}})\leq \frac{1}{2} E_{P}\left[\int_0^{\sigma^x_T} \|\beta(s,X^x_s,Z)\|^2ds\,\,\rho^x_{\sigma^x_T}\right],
\end{align}
where $\sigma^x_T=(\tau\circ X^x)\wedge T$ and
\begin{align}
\rho^x_{\sigma^x_T}=\exp\left(\int_0^{\sigma^x_T}\beta(s,X^x_s,Z)\cdot dW_s-\frac{1}{2}\int_0^{\sigma_T^x} \|\beta(s,X^x_s,Z)\|^2ds\right).
\end{align}
\end{lemma}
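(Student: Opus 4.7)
The plan is to invoke Girsanov's theorem on the underlying space $(M,\mathcal{G}_T,P)$, transfer to path space via the uniqueness-in-law hypothesis, and then apply Lemma \ref{lemma:radon_mart} combined with the data processing inequality to handle the stopping time. First I would use the fact that $\rho^x_T$ is a $P$-martingale (built into Assumption \ref{assump:SDE}) to define a probability measure $Q$ on $(M,\mathcal{G}_T)$ by $dQ/dP = \rho^x_T$. Girsanov's theorem then asserts that $\widetilde{W}^Q_t \equiv W_t - \int_0^t \beta(s,X^x_s,Z)\,ds$ is a $Q$-Brownian motion, so under $Q$ the process $X^x$ satisfies the modified SDE $dX^x_t = \widetilde{b}(t,X^x_t,Z)dt + \sigma(t,X^x_t,Z)d\widetilde{W}^Q_t$ with the same initial condition $x$ and with $Z \sim \nu$. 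The uniqueness-in-law hypothesis (item 5 of Assumption \ref{assump:SDE}) then forces the joint law of $(X^x|_{[0,T]},Z)$ under $Q$ to coincide with the joint law of $(\widetilde{X}^x|_{[0,T]},\widetilde{Z})$ under $\widetilde{P}$; in particular $(X^x)_*Q|_{\mathcal{F}_T} = \widetilde{P}^x|_{\mathcal{F}_T}$.

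Next I would handle the stopping time. Setting $\tau' \equiv \tau \circ X^x$ (a $\mathcal{G}_t$-stopping time, since $X^x$ is adapted) and viewing $X^x$ as a measurable map from $(M,\mathcal{G}_{\tau'\wedge T})$ to $(\Omega,\mathcal{F}_{\tau\wedge T})$, the data processing inequality gives
\begin{align}
R(\widetilde{P}^x|_{\mathcal{F}_{\tau\wedge T}}\|P^x|_{\mathcal{F}_{\tau\wedge T}})
\leq R(Q|_{\mathcal{G}_{\tau'\wedge T}}\|P|_{\mathcal{G}_{\tau'\wedge T}}).
\end{align}
Applying Lemma \ref{lemma:radon_mart} to $\rho^x_t$ (which is a right-continuous $P$-martingale as an exponential martingale) yields
\begin{align}
R(Q|_{\mathcal{G}_{\tau'\wedge T}}\|P|_{\mathcal{G}_{\tau'\wedge T}}) = E_Q[\log \rho^x_{\tau'\wedge T}].
\end{align}

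Now I would rewrite $\log \rho^x_{\tau'\wedge T}$ in terms of the $Q$-Brownian motion. Substituting $dW_s = d\widetilde{W}^Q_s + \beta(s,X^x_s,Z)\,ds$ in the stochastic integral appearing in $\log\rho^x_T$ and then stopping at $\tau'\wedge T$ (via optional sampling applied to the local martingale and the bounded-variation parts separately) gives
\begin{align}
\log \rho^x_{\tau'\wedge T}
= \int_0^{\tau'\wedge T} \beta(s,X^x_s,Z)\cdot d\widetilde{W}^Q_s
+ \tfrac{1}{2}\int_0^{\tau'\wedge T} \|\beta(s,X^x_s,Z)\|^2\,ds.
\end{align}
Taking $E_Q$ and arguing that the stochastic integral has mean zero under $Q$ gives $E_Q[\log\rho^x_{\tau'\wedge T}] = \tfrac{1}{2}E_Q[\int_0^{\tau'\wedge T}\|\beta(s,X^x_s,Z)\|^2\,ds]$. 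The first claimed bound then follows by transporting this expectation back to $\widetilde{P}$ using the joint-law identity from step one (which gives $\widetilde{\sigma}_T^x = \tau'\wedge T$ in distribution), while the second bound follows from the Radon-Nikodym relation $E_Q[Y] = E_P[\rho^x_{\tau'\wedge T}Y]$ for $\mathcal{G}_{\tau'\wedge T}$-measurable $Y$ (a consequence of the optional sampling identity $E_P[\rho^x_T \mid \mathcal{G}_{\tau'\wedge T}] = \rho^x_{\tau'\wedge T}$) applied to $Y = \int_0^{\tau'\wedge T}\|\beta(s,X^x_s,Z)\|^2\,ds$.

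The main obstacle is justifying that the stochastic integral $\int_0^{\tau'\wedge T}\beta\cdot d\widetilde{W}^Q_s$ is a true martingale (not merely a local martingale) under $Q$, which is what lets its $Q$-expectation vanish. I would address this by a standard localization: choose stopping times $\tau_k \nearrow \infty$ that reduce the local martingale and such that $\beta(s,X^x_s,Z)$ is bounded on $[0,\tau_k]$, derive the identity $E_Q[\log\rho^x_{\tau'\wedge T\wedge\tau_k}] = \tfrac{1}{2}E_Q[\int_0^{\tau'\wedge T\wedge\tau_k}\|\beta\|^2ds]$ at each $k$, and pass to the limit using monotone convergence on the right-hand side combined with Fatou's inequality (via lower semicontinuity of relative entropy, as in Lemma \ref{lemma:lim_rel_ent}) on the left. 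This turns the final identity into the stated inequality, which is consistent with the \enquote{$\leq$} already appearing in the lemma statement.
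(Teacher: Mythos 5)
Your overall strategy matches what the paper's preamble indicates (Girsanov to construct $Q$, uniqueness in law to identify $(X^x)_*Q$ with $\widetilde P^x$, the data-processing inequality to pass to path space, and Lemma~\ref{lemma:radon_mart} to handle the stopping time), and the first three steps are carried out correctly. The weak spot is the final localization step. You claim that passing to the limit in $E_Q[\log\rho^x_{\tau'\wedge T\wedge\tau_k}]=\tfrac12 E_Q\bigl[\int_0^{\tau'\wedge T\wedge\tau_k}\|\beta\|^2ds\bigr]$ via ``Fatou / lower semicontinuity of relative entropy'' on the left yields the stated $\leq$. But lower semicontinuity runs in the opposite direction here: $E_Q[\log\rho^x_{\tau'\wedge T\wedge\tau_k}]=R\bigl(Q|_{\mathcal{G}_{\tau'\wedge T\wedge\tau_k}}\|P|_{\mathcal{G}_{\tau'\wedge T\wedge\tau_k}}\bigr)$, and since $\mathcal{G}_{\tau'\wedge T\wedge\tau_k}\subset\mathcal{G}_{\tau'\wedge T}$, data processing gives these quantities $\leq R\bigl(Q|_{\mathcal{G}_{\tau'\wedge T}}\|P|_{\mathcal{G}_{\tau'\wedge T}}\bigr)$, so a naive limit bounds the target from \emph{below}, not above. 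Lemma~\ref{lemma:lim_rel_ent} does not rescue this either, as it concerns pushforwards by a converging QoI, not the restriction to an increasing family of stopped $\sigma$-algebras.

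The fix is short. If $E_Q\bigl[\int_0^{\tau'\wedge T}\|\beta\|^2 ds\bigr]=\infty$ the stated bound is vacuous, so you may assume it is finite; but then $\langle M\rangle_{\tau'\wedge T}=\int_0^{\tau'\wedge T}\|\beta\|^2ds$ has finite $Q$-expectation, so $M^{\tau'\wedge T}$ is a genuine $L^2(Q)$-martingale and optional sampling at the bounded stopping time $\tau'\wedge T$ gives $E_Q[M_{\tau'\wedge T}]=0$ directly, with no localization needed. Combined with Lemma~\ref{lemma:radon_mart} applied at $\tau'\wedge T$ itself, this yields $R\bigl(Q|_{\mathcal{G}_{\tau'\wedge T}}\|P|_{\mathcal{G}_{\tau'\wedge T}}\bigr)=\tfrac12 E_Q\bigl[\int_0^{\tau'\wedge T}\|\beta\|^2ds\bigr]$, and the lemma's two displayed inequalities then follow from your data-processing step and the Radon--Nikodym transfer $E_Q[\cdot]=E_P[\rho^x_{\tau'\wedge T}\,\cdot\,]$, exactly as you wrote. (Alternatively, your localized identities do converge upward to the target entropy, but justifying this requires showing $\bigvee_k\mathcal{G}_{\tau'\wedge T\wedge\tau_k}=\mathcal{G}_{\tau'\wedge T}$, which holds here because $\tau'\wedge T\wedge\tau_k=\tau'\wedge T$ eventually a.s., followed by a martingale-convergence argument for the restricted densities; this is more work than the $L^2$ route.)
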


A class of alternative models that are of particular case of interest is covered by the following corollary:
 \begin{corollary}
Suppose the baseline model is the solution to the following SDE on $\mathbb{R}^n$,
\begin{align}
dX^x_t=b(t,X^x_t)dt+\sigma(t,X^x_t)dW_t,\,\,\,\, X^x_0=x,
\end{align}
and the alternative model is given by
\begin{align}
&d\widetilde{X}^x_t=b(t,\widetilde{X}^x_t)dt+\sigma(t,\widetilde{X}^x_t)\beta(t,\widetilde{X}^x_t,\widetilde{Y}^y_t,\widetilde{Z})dt+\sigma(t,\widetilde{X}^x_t)d\widetilde{W}^1_t,\,\,\,\, \widetilde{X}^x_0=x,
\end{align}
which includes a modified  drift that depends on external data, $\widetilde{Z}$, and is also coupled to another SDE
\begin{align}
&d\widetilde{Y}^y_t=f(t,\widetilde{X}^x_t,\widetilde{Y}^y_t,\widetilde{Z})dt+\eta(t,\widetilde{X}^x_t,\widetilde{Y}^y_t,\widetilde{Z}) d\widetilde{W}^2_t,
\end{align}
on $\mathbb{R}^k$, where $\widetilde{W}^1_t$ and $\widetilde{W}^2_t$ are independent Wiener processes.

Suppose also we have $Y^y_t$ such that
\begin{align}
dY^y_t=f(t,X_t^x,Y^y_t,Z)dt+\eta(t,X_t^x,Y^y_t,Z)dW_t^\prime,\,\,\, Y^y_0=y,
\end{align}
where $W_t^\prime$ is a Wiener process independent from $W$.

If the base, $(X^x_t,Y^y_t)$, and alternative, $(\widetilde{X}^x_t,\widetilde{Y}^y_t)$, systems satisfy Assumption \ref{assump:SDE}, we have
\begin{align}
R( (\widetilde{X}^x_*\widetilde{P})|_{\mathcal{F}_{\tau\wedge T}}\|(X^x_*P)|_{\mathcal{F}_{\tau\wedge T}})\leq&R( ((\widetilde{X}^x,\widetilde{Y}^y)_* \widetilde{P})|_{\mathcal{F}_{(\tau\circ\pi_1)\wedge T}}\|(({X}^x,{Y}^y)_* {P})|_{\mathcal{F}_{(\tau\circ\pi_1)\wedge T}})\\
\leq&\frac{1}{2}E_{\widetilde{P}}\left[\int_0^{(\tau\circ\widetilde{X}^x)\wedge T}\|\beta(s,\widetilde{X}^x_s,\widetilde{Y}^y_s,\widetilde{Z})\|^2ds\right]\notag
\end{align}
for any stopping time $\tau$ on $C([0,\infty),\mathbb{R}^n)$.  Here $\pi_1$ denotes the map that takes a path in $C([0,\infty),\mathbb{R}^{n}\times\mathbb{R}^k)$ and returns the path of the $\mathbb{R}^n$-valued component.
\end{corollary}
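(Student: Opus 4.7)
The plan is to treat the two inequalities in the chain separately. The first is a direct consequence of the data-processing inequality (Theorem \ref{thm:obs_adap_info_ineq}, part 2), while the second follows by applying the preceding lemma (the Girsanov-based relative entropy bound) to the joint process $(X^x,Y^y)$ versus $(\widetilde{X}^x,\widetilde{Y}^y)$ in the enlarged state space $\mathbb{R}^{n+k}$.

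For the first inequality, let $\pi_1 : C([0,\infty),\mathbb{R}^n\times\mathbb{R}^k)\to C([0,\infty),\mathbb{R}^n)$ denote the continuous (hence measurable) coordinate projection, and let $\mathcal{F}^{(1)}_t$, $\mathcal{F}^{(2)}_t$ be the canonical filtrations on the respective path spaces. I would first check that $\pi_1^{-1}(\mathcal{F}^{(1)}_t)\subset \mathcal{F}^{(2)}_t$, so that $\tau\circ\pi_1$ is indeed an $\mathcal{F}^{(2)}$-stopping time and $\pi_1^{-1}(\mathcal{F}^{(1)}_{\tau\wedge T})\subset \mathcal{F}^{(2)}_{(\tau\circ\pi_1)\wedge T}$. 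Since $(\pi_1)_*(\widetilde{X}^x,\widetilde{Y}^y)_*\widetilde{P}=\widetilde{X}^x_*\widetilde{P}$ (and likewise for the baseline), the data-processing inequality from Theorem \ref{thm:obs_adap_info_ineq} yields the first inequality immediately: restricting the joint measures to a smaller sigma algebra (the pulled-back one) can only reduce their relative entropy, and on that pulled-back sigma algebra the relative entropy coincides with $R(\widetilde{X}^x_*\widetilde{P}|_{\mathcal{F}^{(1)}_{\tau\wedge T}}\|X^x_*P|_{\mathcal{F}^{(1)}_{\tau\wedge T}})$.

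For the second inequality, I would apply the preceding lemma to the joint SDE system. The baseline is $(X^x_t,Y^y_t)$, driven by the independent Wiener processes $(W_t,W'_t)$ with drift $(b,f)^T$ and block-diagonal diffusion $\Sigma\equiv\mathrm{diag}(\sigma,\eta)$; the alternative $(\widetilde{X}^x_t,\widetilde{Y}^y_t)$ has the same diffusion structure but drift $(b+\sigma\beta,f)^T$, driven by $(\widetilde{W}^1,\widetilde{W}^2)$. The drift perturbation therefore takes the Girsanov form $\Sigma\widehat{\beta}$ with $\widehat{\beta}=(\beta,0)^T$, so $\|\widehat{\beta}\|^2=\|\beta\|^2$. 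The stopping time $\tau\circ\pi_1$ on the joint path space, evaluated along $(\widetilde{X}^x,\widetilde{Y}^y)$, equals $\tau\circ\widetilde{X}^x$, so the preceding lemma (in its $E_{\widetilde{P}}$-form) delivers exactly the claimed bound.

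The main obstacle is verifying that the joint system $(X^x,Y^y)$ versus $(\widetilde{X}^x,\widetilde{Y}^y)$ satisfies Assumption \ref{assump:SDE} in the enlarged state space. This is essentially hypothesized in the statement, but requires careful bookkeeping of the independent driving Wiener processes, of the external data $Z$ and $\widetilde{Z}$, and of the Novikov and uniqueness-in-law conditions for the joint coefficients $(b,f)^T$, $(b+\sigma\beta,f)^T$, and $\mathrm{diag}(\sigma,\eta)$. A secondary technical point is the pushforward identification $((\widetilde{X}^x,\widetilde{Y}^y)_*\widetilde{P})|_{\pi_1^{-1}(\mathcal{F}^{(1)}_{\tau\wedge T})}\cong (\widetilde{X}^x_*\widetilde{P})|_{\mathcal{F}^{(1)}_{\tau\wedge T}}$, which is a routine consequence of the definitions of pushforward and restriction.
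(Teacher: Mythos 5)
Your proposal is correct and matches the intended derivation: the paper states this corollary without an explicit proof, but the natural route is exactly yours, namely (i) the first inequality via the data-processing inequality of Theorem~\ref{thm:obs_adap_info_ineq}(2) applied to the projection $\pi_1$ (using the measurability of $\pi_1$ from $(\Omega^{(2)},\mathcal{F}^{(2)}_{(\tau\circ\pi_1)\wedge T})$ to $(\Omega^{(1)},\mathcal{F}^{(1)}_{\tau\wedge T})$ and the pushforward/restriction identification you flag), and (ii) the second inequality via the preceding Girsanov lemma applied to the joint process in $\mathbb{R}^{n+k}$ with perturbation vector $\widehat{\beta}=(\beta,0)^T$. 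You also correctly identify why the passage to the joint system is necessary (the $\widetilde{Y}^y_s$-dependence of $\beta$ cannot be absorbed into the $\mathcal{G}_0$-measurable external-data slot $\widetilde{Z}$, so the $n$-dimensional lemma does not apply directly), and that the Assumption~\ref{assump:SDE} hypotheses for the joint system are postulated in the corollary's statement rather than something you need to verify.
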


\section{UQ for General Discounted Observables}\label{app:discounted}
Theorem \ref{thm:UQ_integral_QoI} can be generalized beyond the case of exponential discounting.  The proof is very similar and so it is omitted.
 \begin{theorem} \label{thm:UQ_integral_QoI2}
Let $\pi$ be a sigma-finite positive measure on $\mathcal{T}$, $\tau:\Omega\to \overline{\mathcal{T}}$ be a $\mathcal{F}_t$-stopping time, $f:\mathcal{T}\times\Omega\to\overline{\mathbb{R}}$ be progressively measurable, and suppose we also have one of the following two conditions:
\begin{enumerate}
\item $\pi$ is a finite measure and $f_s\geq 0$  for $\pi$-a.e. $s$.
\item $E_{\widetilde{P}}\left[\int_{[0,\tau]} |f_s|d\pi\right]<\infty$.
\end{enumerate}
For $t\in\overline{\mathcal{T}}$ define the  progressive process $F_t=\int_{[0,t]} f_s \pi(ds)$. Then
\begin{align}
\pm E_{\widetilde{P}}\left[F_\tau\right]\leq&\int_{\mathcal{T}}\inf_{c>0}\left\{\frac{1}{c}\Lambda_P^{{1_{s\leq \tau} f_{s}}}(\pm c)+\frac{1}{c} R(\widetilde{P}|_{\mathcal{F}_{s\wedge\tau}}\|P|_{\mathcal{F}_{s\wedge\tau}})\right\}\pi(ds).\notag
\end{align}
\end{theorem}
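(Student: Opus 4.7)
The plan is to follow the template of the proof of Theorem \ref{thm:UQ_integral_QoI} with two modifications: (i)~track the upper limit of integration $\tau$ via the indicator $1_{s\leq\tau}$, and (ii)~replace the exponential discounting measure $\lambda e^{-\lambda s}\,ds$ with the general $\sigma$-finite $\pi$. I would first rewrite
\begin{align*}
F_\tau \;=\; \int_{[0,\tau]} f_s\,\pi(ds) \;=\; \int_{\mathcal{T}} 1_{s\leq\tau}\, f_s\,\pi(ds),
\end{align*}
which is well-defined as an extended real valued progressive quantity. Under condition~2, the integrability hypothesis allows Fubini's theorem to commute the $\widetilde P$-expectation and the $\pi$-integral, yielding $E_{\widetilde P}[F_\tau]=\int_{\mathcal T} E_{\widetilde P}[1_{s\leq\tau}f_s]\,\pi(ds)$ and finiteness of $E_{\widetilde P}[1_{s\leq\tau}f_s]$ for $\pi$-a.e.\ $s$.

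The next step is the key measurability observation: for each fixed $s$, the random variable $1_{s\leq\tau}f_s$ is $\mathcal{F}_{s\wedge\tau}$-measurable. Indeed, $1_{s\leq\tau} f_s = 1_{s\leq\tau} f_{s\wedge\tau}$, and $f_{s\wedge\tau}$ is $\mathcal{F}_{s\wedge\tau}$-measurable by Lemma~\ref{lemma:F_tau} applied to the bounded stopping time $s\wedge\tau$. A direct check using the definition of $\mathcal{F}_{s\wedge\tau}$ (splitting on whether $t\geq s$ or $t<s$) gives $\{s\leq\tau\}\in\mathcal{F}_{s\wedge\tau}$, so the product is $\mathcal{F}_{s\wedge\tau}$-measurable. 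With this in hand, Theorem~\ref{thm:obs_adap_info_ineq} applied pointwise in $s$ with $\mathcal{G}=\mathcal{F}_{s\wedge\tau}$ and QoI $1_{s\leq\tau}f_s$ gives
\begin{align*}
\pm E_{\widetilde P}[1_{s\leq\tau} f_s] \;\leq\; \inf_{c>0}\left\{\tfrac{1}{c}\Lambda_P^{1_{s\leq\tau}f_s}(\pm c) + \tfrac{1}{c}R(\widetilde P|_{\mathcal F_{s\wedge\tau}}\|P|_{\mathcal F_{s\wedge\tau}})\right\}
\end{align*}
for $\pi$-a.e.\ $s$. Integrating both sides against $\pi$ (the right-hand side is the $\pi$-integrand in the theorem) yields the claim under condition~2.

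To handle condition~1, I would follow the truncation argument at the end of the proof of Theorem~\ref{thm:UQ_integral_QoI}: define $f_s^N = f_s 1_{f_s\leq N}+N 1_{f_s>N}$, note $f_s^N$ is still progressive and bounded (hence satisfies condition~2 since $\pi$ is finite), apply the already-established bound to $f_s^N$, and pass to $N\to\infty$ using the monotone convergence theorem on the left-hand side and on the integral on the right-hand side (noting that the integrand is monotonically increasing in $N$ for the ``$+$'' sign, and similarly for the ``$-$'' sign, since $\Lambda_P^{1_{s\leq\tau} f_s^N}(\pm c)$ is monotone in $N$ via monotone convergence inside the logarithm).

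The main obstacles I anticipate are routine rather than conceptual: carefully verifying $\{s\leq\tau\}\in\mathcal{F}_{s\wedge\tau}$ (which is the crucial measurability fact that makes $\mathcal{F}_{s\wedge\tau}$ an admissible goal-oriented sub-$\sigma$-algebra), and ensuring the measurability in $(s,\omega)$ of the integrand so that Fubini applies—this follows from progressive measurability of $f$ combined with joint measurability of $(s,\omega)\mapsto 1_{s\leq\tau(\omega)}$. Once those are in place, the bound follows by plugging into Theorem~\ref{thm:obs_adap_info_ineq} and integrating, exactly as in Theorem~\ref{thm:UQ_integral_QoI}.
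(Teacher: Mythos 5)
Your proof is correct and takes the same route the paper intends: the paper explicitly omits this proof, stating only that it is ``very similar'' to that of Theorem \ref{thm:UQ_integral_QoI}, and your argument is exactly that adaptation—you rewrite $F_\tau=\int 1_{s\leq\tau}f_s\,\pi(ds)$, apply Fubini under condition~2, and correctly identify the one genuinely new ingredient, namely that $1_{s\leq\tau}f_s$ is $\mathcal{F}_{s\wedge\tau}$-measurable (via $1_{s\leq\tau}f_s=1_{s\leq\tau}f_{s\wedge\tau}$, Lemma \ref{lemma:F_tau} applied to the stopping time $s\wedge\tau$, and the check $\{s\leq\tau\}\in\mathcal{F}_{s\wedge\tau}$), after which Theorem \ref{thm:obs_adap_info_ineq} is applied pointwise in $s$ and integrated. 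The truncation step for condition~1 matches the paper's treatment in Theorem \ref{thm:UQ_integral_QoI} at the same level of detail (monotone/dominated convergence), so nothing is missing relative to the paper's own standard of rigor.
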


\section{Hitting Times of Brownian Motion with Constant Drift}\label{app:BM_hitting_times}
For the reader's convenience, here we recall the  moment generating function and distribution of hitting times for Brownian  motion with constant drift; see, for example, Chapter 8 of \cite{shreve2004stochastic} and page 196 in \cite{karatzas2014brownian}.
\begin{lemma}
Let $W_t$ be a $\mathbb{R}$-valued Wiener process (i.e., Brownian motion)  on $(\Omega,\mathcal{F},P)$, $\mu\in\mathbb{R}$, $a\in\mathbb{R}\setminus\{0\}$.  Let $X_t=\mu t+W_t$  be  Brownian motion with constant drift $\mu$.  Define the level-$a$ hitting time $\tau_a=\inf\{t\geq 0: X_t=a\}$. For any measurable  $A\subset (0,\infty]$ we have
\begin{align}
P(\tau_a\in A)=\frac{|a|}{\sqrt{2\pi}}\int_{A\cap(0,\infty)} t^{-3/2} e^{-\frac{(a-\mu t)^2}{2t}}dt+\left(1-e^{\mu a-|\mu a|}\right)1_{\infty\in A}.
\end{align}
In addition, for $\lambda>0$ the MGF is
\begin{align}
E[e^{-\lambda \tau_a}]=\exp\left(a\mu-|a|\sqrt{\mu^2+2\lambda}\right).
\end{align}
\end{lemma}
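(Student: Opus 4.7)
The plan is to handle the two formulas separately: first establish the MGF via an exponential martingale and optional stopping, and then obtain the distribution via a Girsanov change of measure from the driftless case. The two arguments are essentially independent, so they can be presented in either order; I would do the MGF first because it also provides, through the value at $\lambda\to 0^+$, an independent check of the mass at $\infty$.

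For the MGF, I would introduce the exponential martingale
\begin{align}
M_t = \exp\!\bigl(\theta X_t - (\theta\mu + \tfrac{1}{2}\theta^2)t\bigr) = \exp\!\bigl(\theta W_t - \tfrac{1}{2}\theta^2 t\bigr),
\end{align}
which is a genuine $P$-martingale for every $\theta\in\mathbb{R}$. Given $\lambda>0$, I would pick the root of $\theta\mu+\tfrac12\theta^2=\lambda$ of the same sign as $a$, namely $\theta=-\mu+\mathrm{sgn}(a)\sqrt{\mu^2+2\lambda}$, which satisfies $-\theta a=\mu a-|a|\sqrt{\mu^2+2\lambda}$. Optional sampling at the bounded stopping time $\tau_a\wedge n$ gives $E[M_{\tau_a\wedge n}]=1$. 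On $\{\tau_a\le n\}$ one has $X_{\tau_a}=a$ and $M_{\tau_a}=e^{\theta a-\lambda\tau_a}$; on $\{\tau_a>n\}$ the chosen sign of $\theta$ yields $\theta X_n\le \theta a$ (since $X_s$ has not crossed $a$ for $s\le n$), so $M_n\le e^{\theta a-\lambda n}\to 0$. Since $M_{\tau_a\wedge n}\le e^{\theta a}$ uniformly, dominated convergence gives $e^{\theta a}\,E[e^{-\lambda\tau_a}1_{\tau_a<\infty}]=1$, which, with the convention $e^{-\lambda\cdot\infty}:=0$, is the stated MGF.

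For the distribution on $(0,\infty)$, I would define on each $\mathcal{F}_t$ the equivalent measure $Q$ by $dQ/dP|_{\mathcal{F}_t}=\exp(-\mu W_t-\tfrac12\mu^2 t)$; by Girsanov, $X_t$ is a standard Brownian motion under $Q$, and $dP/dQ|_{\mathcal{F}_t}=\exp(\mu X_t-\tfrac12\mu^2 t)$. The reflection principle gives the driftless hitting-time density $q(t)=|a|(2\pi t^3)^{-1/2}e^{-a^2/(2t)}$ on $(0,\infty)$, with $Q(\tau_a<\infty)=1$. Applying Lemma \ref{lemma:radon_mart} (equivalently, optional sampling of the Radon--Nikodym martingale) at the bounded time $\tau_a\wedge n$ and taking $n\to\infty$ on the set $\{\tau_a<\infty\}$ yields, for any bounded measurable $h$ on $(0,\infty)$,
\begin{align}
E_P\!\left[h(\tau_a)1_{\tau_a<\infty}\right] = \int_0^\infty h(t)\,e^{\mu a-\mu^2 t/2}\,q(t)\,dt.
\end{align}
The algebraic identity $\mu a-\tfrac12\mu^2 t-\tfrac{a^2}{2t}=-\tfrac{(a-\mu t)^2}{2t}$ then produces the stated density. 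The mass at infinity is $P(\tau_a=\infty)=1-P(\tau_a<\infty)$, and evaluating the MGF formula at $\lambda\downarrow 0$ (or performing the Gaussian integral $\int_0^\infty q(t)e^{\mu a-\mu^2 t/2}\,dt$ by completing the square) gives $P(\tau_a<\infty)=e^{\mu a-|\mu a|}$.

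The main technical point is the passage from $\tau_a\wedge n$ to $\tau_a$, which in both parts requires controlling behaviour on $\{\tau_a=\infty\}$. For the MGF, the sign choice of $\theta$ makes the exponential martingale decay to $0$ uniformly on $\{\tau_a>n\}$. For the density, restricting to $\{\tau_a<\infty\}$ sidesteps the issue, with the atom at $\infty$ handled separately by normalization. Everything else is routine.
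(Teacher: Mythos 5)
Your proof is correct. The paper itself does not prove this lemma; Appendix F merely records it as a known fact and cites Chapter 8 of Shreve and page 196 of Karatzas--Shreve, so there is no internal proof to compare against. Your derivation is the standard one and is sound: for the Laplace transform, the exponential martingale $M_t=\exp(\theta W_t-\tfrac12\theta^2 t)$ with $\theta=-\mu+\mathrm{sgn}(a)\sqrt{\mu^2+2\lambda}$ chosen so that $\mathrm{sgn}(\theta)=\mathrm{sgn}(a)$, which makes $M_{\tau_a\wedge n}\le e^{\theta a}$ uniformly and $M_n\to 0$ on $\{\tau_a>n\}$, so dominated convergence upgrades $E[M_{\tau_a\wedge n}]=1$ to $e^{\theta a}E[e^{-\lambda\tau_a}1_{\tau_a<\infty}]=1$ and $-\theta a=\mu a-|a|\sqrt{\mu^2+2\lambda}$ gives the stated formula; for the density, the Girsanov change of measure tilts the driftless reflection-principle density $q(t)=|a|(2\pi t^3)^{-1/2}e^{-a^2/(2t)}$ by $e^{\mu a-\mu^2 t/2}$, and the identity $\mu a-\tfrac12\mu^2 t-\tfrac{a^2}{2t}=-\tfrac{(a-\mu t)^2}{2t}$ yields the claimed density on $(0,\infty)$, with the atom $1-e^{\mu a-|\mu a|}$ at $\infty$ recovered either by letting $\lambda\downarrow 0$ (monotone convergence) or by integrating the density. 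All the limit passages you invoke (dominated convergence in $n$ in both parts, monotone convergence in $\lambda$) are justified as you indicate, and the appeal to the paper's Lemma~\ref{lemma:radon_mart} for optional sampling of the Radon--Nikodym martingale is the right tool. No gaps.
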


\section{Expected Hitting Times of Perturbed Brownian Motion}\label{app:BM_Etau}
In this appendix we bound the expected hitting times (\ref{eq:BM_QoI2}) for perturbations of Brownain motion of the form (\ref{eq:BM_alt}), using the techniques of Section \ref{subsec:exp_stop_time}. We  assume that $\mu$ and $a$  have the same sign in order to ensure the hitting time is a.s.-finite under the baseline model.   For a given $\alpha>0$ we  specialize \req{eq:BM_alt} to the following:
\begin{flalign} \label{eq:BM_Ambiguity2}
\text{\bf      Alternative Models:} &&\text{ perturbations by drifts, }\beta, \text{ with }\|\beta\|_\infty\leq\alpha.&& &&
\end{flalign}
 This example uses the technique of Section \ref{subsec:exp_stop_time}.

 The optimal bounds can be obtained directly (i.e., without using the UQ methods developed above) by combining the known distribution of $\tau_a$ \cite{karatzas2014brownian} with the comparison principle, i.e., with
 \begin{align}\label{eq:BM_comparison}
(\mu-\|\beta\|_\infty)t+\widetilde W_t \leq \widetilde X_t\leq (\mu+\|\beta\|_\infty)t+\widetilde W_t.
 \end{align}
We assume that $\alpha<\mu$ so  that the constant-drift lower bound, and hence also the perturbed model, have a.s.-finite hitting times. This yields
 \begin{align}\label{eq:BM_hitting_optimal}
\frac{a}{\mu+\alpha}\leq  {E}_{\widetilde{P}^0}[\tau_a]\leq \frac{a}{\mu-\alpha},
 \end{align}
 where upper and lower bounds are achieved in the cases of constant drift perturbations  $\alpha$ and $-\alpha$ respectively.  

The optimal bounds \req{eq:BM_hitting_optimal} provide  a useful test-case for our  goal-oriented UQ bounds.   \req{eq:BM_Ambiguity2} together with Girsanov's theorem gives a bound on the relative entropy:
\begin{align}
&R( \widetilde{P}^0|_{\mathcal{F}_{\tau_a\wedge n}}\|P^0|_{\mathcal{F}_{\tau_a\wedge n}})\leq\frac{\alpha^2 }{2}E_{\widetilde{P}^0}\left[ \tau_a\wedge n\right]
\end{align} 
for all $n$; in the language of Theorem \ref{thm:goal_info_ineq3}, we can take $\mathcal{G}=\mathcal{F}_{\tau_a\wedge n}$ and $G=\frac{\alpha^2 }{2}(\tau_a\wedge n)$.

Using  the known formula for the cumulant generating function of $\tau_a$ (see Chapter 8 of \cite{shreve2004stochastic})  and analytic continuation, the cumulant generating function in the baseline model can be computed:
\begin{align}\label{eq:BM_base_cumulant}
\Lambda_{P^0}^{\tau_a}(c)=a\mu-a\sqrt{\mu^2-2c},\,\,\, c<\mu^2/2.
\end{align}
Therefore Corollary \ref{cor:E_tau_bound} yields
\begin{align}\label{eq:BM_hitting_method}
& -\inf_{c>0}\left\{ (c+K)^{-1}\left(a\mu-a\sqrt{\mu^2+2c}\right)\right\} \\
\leq& E_{\widetilde{P}^0}[\tau_a]\notag\\
\leq&\inf_{K<\lambda<\mu^2/2}\left\{ (\lambda-K)^{-1}\left(a\mu-a\sqrt{\mu^2-2\lambda}\right)\right\},\notag
\end{align}
where $K\equiv\frac{1}{2}\alpha^2$. The above optimization problems can be solved explicitly, with minimizers $c^*=K+\sqrt{2K}\mu$ and $\lambda^*=-K+\sqrt{2K} \mu$ respectively.  Computing the corresponding minimum values, one finds that the UQ bounds resulting from our method, \req{eq:BM_hitting_method}, are the same as the optimal bounds, \req{eq:BM_hitting_optimal}, that were obtained from the comparison principle.    We also note that, while the comparison principle is only available in very specialized circumstances, our UQ method is quite general; in particular, our method is not restricted to 1-D systems.  Finally, the non-goal-oriented UQ bounds, \req{goal_oriented_bound}, are not effective here;  the non-goal-oriented relative entropy over an infinite time-horizon is $R(\widetilde{P}^0\|P^0)$, which  is infinite for the constant drift perturbations $\beta=\pm \alpha$.

\section{Vasicek Model UQ Bounds}\label{app:vasicek}
Here we record a pair of lemmas that are needed for the analysis of option values under the Vasicek interest rate model, as well as the final UQ bound obtained by our method. See Section \ref{sec:Vasicek} for definitions of the notation.
\begin{lemma}
Suppose $r>\frac{\widetilde\sigma^2}{2\gamma^2}$.  Then $\lim_{n\to\infty}\int_0^n r+\Delta r_s ds=\infty$ a.s. under both the base and the alternative models. 
\end{lemma}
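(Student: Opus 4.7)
The plan is first to reduce the claim to a statement about $\Delta r$ alone. Since $W$ and $\widetilde W$ are independent Brownian motions, the Girsanov change of measure relating $P_{X^r,\Delta r}$ and $\widetilde P_{\widetilde X,\Delta r}$ only modifies the drift of the $W$-integral driving the asset equation and leaves the $\widetilde W$-driver of the OU equation invariant. Consequently $\Delta r$ has the same law under both measures, and it suffices to prove the a.s.\ divergence under the law of the OU process $d\Delta r_t = -\gamma \Delta r_t\,dt + \widetilde\sigma\,d\widetilde W_t$, $\Delta r_0=0$.

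Next I would use the explicit representation $\Delta r_t = \widetilde\sigma \int_0^t e^{-\gamma(t-s)}\,d\widetilde W_s$ and a stochastic Fubini argument to obtain the identity
\begin{align*}
\int_0^n \Delta r_s\,ds \;=\; \frac{\widetilde\sigma}{\gamma}\,\widetilde W_n \;-\; \frac{1}{\gamma}\,\Delta r_n,
\end{align*}
so that
\begin{align*}
\frac{1}{n}\int_0^n \bigl(r+\Delta r_s\bigr)\,ds \;=\; r \;+\; \frac{\widetilde\sigma}{\gamma}\cdot\frac{\widetilde W_n}{n} \;-\; \frac{\Delta r_n}{\gamma n}.
\end{align*}

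I would then verify that each of the last two terms tends to $0$ almost surely along the integers. The strong law of large numbers for Brownian motion yields $\widetilde W_n/n\to 0$ a.s. The random variable $\Delta r_n$ is centered Gaussian with variance at most $\widetilde\sigma^2/(2\gamma)$, so a Gaussian tail bound gives $P(|\Delta r_n|>\varepsilon n) \le 2\exp(-\gamma\varepsilon^2 n^2/\widetilde\sigma^2)$; these are summable over $n\in\mathbb{Z}^+$, and Borel--Cantelli delivers $\Delta r_n/n\to 0$ a.s. Hence $n^{-1}\int_0^n(r+\Delta r_s)\,ds\to r>0$ a.s., which forces $\int_0^n (r+\Delta r_s)\,ds\to +\infty$ a.s.

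There is no serious obstacle: the argument is essentially an ergodicity-plus-LLN calculation for a linear SDE, with the only nontrivial step being the stochastic Fubini manipulation that expresses the time integral of $\Delta r$ in closed form. I note that the assumption $r>\widetilde\sigma^2/(2\gamma^2)$ is in fact stronger than what this particular conclusion requires (any $r>0$ suffices for the a.s.\ divergence), but the sharper bound presumably enters elsewhere in Section~\ref{sec:Vasicek}, e.g.\ to ensure that the conditioning event $\{\int_0^\tau r+\Delta r_s\,ds\ge 0\}$ has probability bounded away from zero.
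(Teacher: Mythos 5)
Your proof is correct, but it takes a genuinely different route from the paper's. The paper argues via a Chernoff-type exponential Markov bound: it uses the fact (from \cite{MarioAbundo2015}) that $\int_0^t (r+\Delta r_s)\,ds$ is $N(rt,\widetilde\sigma_t^2)$ with $\widetilde\sigma_t^2=\frac{\widetilde\sigma^2}{2\gamma^3}(2\gamma t+4e^{-\gamma t}-e^{-2\gamma t}-3)$, then estimates $P\bigl(\int_0^n (r+\Delta r_s)\,ds<R\bigr)\le e^R\,E\bigl[e^{-\int_0^n (r+\Delta r_s)\,ds}\bigr]=e^R e^{-rn+\widetilde\sigma_n^2/2}$ and sums over $n$; since $\widetilde\sigma_n^2\sim\frac{\widetilde\sigma^2}{\gamma^2}n$, this is summable precisely when $r>\frac{\widetilde\sigma^2}{2\gamma^2}$, so the hypothesis is used essentially. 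Your argument instead integrates the OU equation directly to get the pathwise identity $\int_0^n\Delta r_s\,ds=\frac{1}{\gamma}(\widetilde\sigma\widetilde W_n-\Delta r_n)$ (this is just the integral form of the SDE; no stochastic Fubini is really needed), then applies the Brownian SLLN and a Gaussian tail/Borel--Cantelli bound on $\Delta r_n/n$. This is more elementary in that it sidesteps the closed-form variance $\widetilde\sigma_t^2$ entirely, and it proves a slightly stronger statement: divergence holds a.s.\ for any $r>0$, with no constraint on $\widetilde\sigma,\gamma$. Your closing observation is accurate: the threshold $r>\frac{\widetilde\sigma^2}{2\gamma^2}$ is an artifact of the paper's Chernoff-based argument rather than a necessary condition for the lemma, and your decomposition makes that transparent. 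One small presentational note: since the conclusion $\widetilde F_n\to 0$ must hold under both measures, it is worth stating (as you do) that $\Delta r$ has the same law under $P_{X^r,\Delta r}$ and $P_{\widetilde X,\Delta r}$ before reducing to a statement about the OU process alone; your Girsanov-invariance justification is correct, whereas the paper simply asserts this fact.
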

\begin{proof}

 The distribution of $\Delta r$ is the same under both the base and alternative models, so it does not matter which one we consider. As shown in \cite{MarioAbundo2015}, $\int_0^t r+\Delta r_s ds$ is normally distributed  for all $t\geq 0$, with mean $m_t\equiv rt$ and variance 
\begin{align}
\widetilde{\sigma}_t^2=\frac{\widetilde{\sigma}^2}{2\gamma^3}\left(2\gamma t+4 e^{-\gamma t} - e^{-2\gamma t} - 3\right).
\end{align}
Using this, for any $R>0$ we can compute
\begin{align}
P\left(\liminf_{n\to\infty}\int_0^n r+\Delta r_s ds<R\right)\leq&\lim_{N\to\infty}\sum_{n\geq N} P\left(\int_0^n r+\Delta r_s ds<R\right)\\
\leq&\lim_{N\to\infty}\sum_{n\geq N} P\left(\exp\left(-\int_0^n r+\Delta r_s ds\right)>\exp(-R)\right)\notag\\
\leq&e^R\lim_{N\to\infty}\sum_{n\geq N} e^{-rn+\frac{1}{2}\widetilde \sigma_n^2}\notag\\
\leq& e^{R+\widetilde{\sigma}^2/\gamma^3}\lim_{N\to\infty}\sum_{n\geq N} e^{-\left(r-\frac{\widetilde{\sigma}^2}{2\gamma^2}\right)n}=0.\notag
\end{align}
Therefore
\begin{align}
P\left(\liminf_{n\to\infty}\int_0^n r+\Delta r_s ds<\infty\right)=\lim_{R\to\infty} P\left(\liminf_{n\to\infty}\int_0^n r+\Delta r_s ds<R\right)=0.
\end{align}
\end{proof}

\begin{lemma}
\begin{align}
\rho_t\equiv\exp\left(\int_0^t \sigma^{-1}\Delta r_s dW_s-\frac{1}{2}\int_0^t\sigma^{-2}|\Delta r_s |^2ds\right).
\end{align}
is a $P$-martingale.
\end{lemma}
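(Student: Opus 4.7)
The plan is to exploit the independence of $W$ and $\widetilde W$ by an initial enlargement of the filtration. Let $\mathcal{H}=\sigma(\widetilde W_u:u\geq 0)$ and set $\mathcal{G}_t=\mathcal{F}_t\vee\mathcal{H}$. Since $W$ is independent of $\mathcal{H}$, a standard initial-enlargement argument shows that $W$ remains a $\mathcal{G}_t$-Brownian motion, and the stochastic integral $\int_0^t\sigma^{-1}\Delta r_sdW_s$ computed in the enlarged filtration agrees almost surely with the one in the original filtration. The point of the enlargement is that under $\mathcal{G}_t$, the entire path of $\Delta r$ is $\mathcal{G}_0$-measurable, so $s\mapsto\sigma^{-1}\Delta r_s$ is deterministic from the point of view of the $W$-integration.

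Next I would establish the key conditional Gaussian identity. Because $s\mapsto\Delta r_s$ has continuous paths,
\[
V_t\equiv\int_0^t\sigma^{-2}|\Delta r_s|^2ds<\infty
\]
almost surely for every $t$. Conditionally on $\mathcal{H}$, the stochastic integral $M_t=\int_0^t\sigma^{-1}\Delta r_sdW_s$ is a Wiener integral of a deterministic $L^2([0,t])$ function against $W$, hence Gaussian with mean $0$ and variance $V_t$. The Gaussian moment generating function then gives $E[\exp(M_t)\mid\mathcal{H}]=\exp(V_t/2)$, so $E[\rho_t\mid\mathcal{H}]=1$ a.s. For the martingale property, apply the same reasoning on $[s,t]$ conditionally on $\mathcal{F}_s\vee\mathcal{H}$: the increment $\int_s^t\sigma^{-1}\Delta r_udW_u$ is, conditionally on this $\sigma$-algebra, Gaussian with mean $0$ and variance $V_t-V_s$, yielding $E[\rho_t/\rho_s\mid\mathcal{F}_s\vee\mathcal{H}]=1$. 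Since $\rho_s$ is $(\mathcal{F}_s\vee\mathcal{H})$-measurable, this gives $E[\rho_t\mid\mathcal{F}_s\vee\mathcal{H}]=\rho_s$, and because $\rho_s$ is already $\mathcal{F}_s$-measurable, the tower property yields $E[\rho_t\mid\mathcal{F}_s]=\rho_s$.

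The main obstacle is that the natural first attempt, Novikov's condition, does not work uniformly in the parameter regime of interest: from \req{eq:Vasicek_dr2} (with $\lambda=1$), the quantity $E[\exp(\tfrac{1}{2}\int_0^t\sigma^{-2}|\Delta r_s|^2ds)]$ is finite only when $\widetilde\sigma^2<\sigma^2\gamma^2$, so one cannot invoke Novikov's criterion in general. The conditioning-on-$\mathcal{H}$ trick sidesteps this entirely, because it only requires almost-sure finiteness of $V_t$ (which is automatic from path continuity of $\Delta r$) rather than integrability of $\exp(V_t/2)$. The only other technicality is verifying that $W$ is still a Brownian motion after initial enlargement by the independent $\sigma$-algebra $\mathcal{H}$, which is a standard consequence of independence and requires no additional hypotheses.
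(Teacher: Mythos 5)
Your approach is correct and genuinely different from the paper's. The paper invokes a piecewise Novikov criterion (Corollary 5.14 of Karatzas--Shreve): it partitions time into intervals of fixed small length $\epsilon$, applies Jensen's inequality to reduce the exponential moment of $\tfrac{1}{2}\int_{t_{n-1}}^{t_n}\sigma^{-2}|\Delta r_s|^2\,ds$ to $\sup_s E[\exp(\tfrac{\epsilon}{2\sigma^2}|\Delta r_s|^2)]$, and uses the fact that $\Delta r_s$ is centered Gaussian with variance at most $\widetilde\sigma^2/(2\gamma)$ to conclude finiteness once $\epsilon<2\sigma^2\gamma/\widetilde\sigma^2$ — so, contrary to what your final paragraph suggests, a Novikov-type argument \emph{does} handle all parameter values, just not in its global form; your diagnosis that the global condition fails when $\widetilde\sigma^2\geq\sigma^2\gamma^2$ is accurate, but the paper's localization removes that restriction. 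Your conditioning-on-$\widetilde W$ route sidesteps exponential moments entirely and relies only on a.s.\ finiteness of $\int_0^t\sigma^{-2}|\Delta r_s|^2\,ds$, which is conceptually cleaner and makes explicit that independence of the two driving noises is what makes the result immediate. The technical points you would need to pin down — stability of the Itô integral under the initial enlargement by $\mathcal{H}=\sigma(\widetilde W)$ (true here because $W$ stays a $\mathcal{G}_t$-Brownian motion by independence), and the identification of the regular conditional law of the $W$-increments given $\mathcal{F}_s\vee\mathcal{H}$ — are standard but should be stated; a convenient shortcut is that you only need the $s=0$ case, i.e.\ $E[\rho_t]=1$ for all $t$, since $\rho$ is a nonnegative local martingale, hence a supermartingale, and a supermartingale with constant expectation is a martingale, so intermediate-time conditioning can be avoided.
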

\begin{proof}

  As shown in \cite{karatzas2014brownian} (see Corollary 5.14), it suffices to prove that there exists $\{t_k\}_{k=0}^\infty$ with $0=t_0<t_1<...<t_n\nearrow \infty$ such that
\begin{align}
E\left[\exp\left(\frac{1}{2}\int_{t_{n-1}}^{t_n}\sigma^{-2}|\Delta r_s |^2ds\right)\right]<\infty \text{ for all }n.
\end{align}
Letting $\Delta t_n=t_n-t_{n-1}$ and using Jensen's inequality, we have
\begin{align}\label{eq:sup_E}
E\left[\exp\left(\frac{1}{2}\int_{t_{n-1}}^{t_n}\sigma^{-2}|\Delta r_s |^2ds\right)\right]\leq& E\left[\Delta t_n^{-1}\int_{t_{n-1}}^{t_n}\exp\left(\frac{\Delta t_n}{2}\sigma^{-2}|\Delta r_s |^2\right)ds\right]\\
\leq&\sup_{t_{n-1}\leq s\leq t_n}E\left[\exp\left(\frac{\Delta t_n}{2}\sigma^{-2}|\Delta r_s |^2\right)\right].\notag
\end{align}
$\Delta r_t$ is normal with mean $0$ and variance $\widetilde\sigma^2(1-e^{-2\gamma t})/(2\gamma)$. Hence, if we fix $\Delta t_n=\epsilon$ small enough then the upper bound in \req{eq:sup_E} is finite for all $n$.
\end{proof}

The final UQ bound on the Vasicek model  obtained by the calculations in Section  \ref{sec:Vasicek} is the following:
\begin{align}  \label{eq:Vasicek_final_bound}
 \pm \widetilde{E}[F_\tau]\leq \inf_{c>0}\left\{\frac{1}{c}\log  \int \int_{-\infty}^\infty (2\pi\widetilde{\sigma}_t^2)^{-1/2} \exp\left(\pm c (K-L)e^{-z}1_{t<\infty}1_{z\geq 0}\right) e^{-\frac{(z-rt)^2}{2\widetilde{\sigma}_t^2}}dz P_{\tau\circ X^r}(dt)\!+\frac{D}{c}\right\}\!/K_{\mp},
\end{align}
where
\begin{align} 
& K_\pm=\pm\inf_{c>0}\bigg\{\frac{1}{c}\log\left(1+(e^{\pm c}-1)\int \frac{1}{2}\left(\text{erf}\left(rt/\sqrt{2\widetilde{\sigma}^2_t}\right)+1\right) P_{\tau\circ X_r}(dt)\right)+\frac{D}{c}\bigg\},\\
&D=\inf_{\lambda>1}\left\{(\lambda-1)^{-1}\log \int e^{\gamma t/2}\left[\frac{1}{\sqrt{1-\frac{\lambda\widetilde\sigma^2}{\sigma^2\gamma^2}}}\sinh\left(\gamma t\sqrt{1-\frac{\lambda\widetilde\sigma^2}{\sigma^2\gamma^2}}\right)+\cosh\left(\gamma t\sqrt{1-\frac{\lambda\widetilde\sigma^2}{\sigma^2\gamma^2}}\right)\right]^{-1/2}P_{\tau\circ X^r}(dt)\right\},\notag\\
&P_{\tau\circ X_r}(dt)= \frac{|a|}{\sqrt{2\pi}}t^{-3/2}e^{-(a-\mu t)^2/(2t)}1_{(0,\infty)}dt,\,\,\,a= -\sigma^{-1}\log(X_0/L),\,\,\, \mu= r/\sigma-\sigma/2,\notag\\
&\widetilde{\sigma}_t^2=\frac{\widetilde{\sigma}^2}{2\gamma^3}\left(2\gamma t+4 e^{-\gamma t} - e^{-2\gamma t} - 3\right).\notag
\end{align}
In creating Figure \ref{fig:option_value_Vasicek}, the integrals over $t$ and $z$ were computed numerically via quadrature methods and the optimization over $c$ was also performed numerically; see Remark \ref{remark:unimodal}. Finally, note that $K_+$ and $K_-$ are, respectively, upper and lower bounds on a probability and they reflect that fact: 
\begin{align}
&K_+\leq\lim_{c\to\infty}\bigg(\frac{1}{c}\log\left(1+(e^{ c}-1)\int \frac{1}{2}\left(\text{erf}\left(rt/\sqrt{2\widetilde{\sigma}^2_t}\right)+1\right) P_{\tau\circ X_r}(dt)\right)+\frac{D}{c}\bigg)=1,\\
&K_-\geq -\lim_{c\to\infty}  \bigg(\frac{1}{c}\log\left(1+(e^{- c}-1)\int \frac{1}{2}\left(\text{erf}\left(rt/\sqrt{2\widetilde{\sigma}^2_t}\right)+1\right) P_{\tau\circ X_r}(dt)\right)+\frac{D}{c}\bigg)=0.\notag
\end{align}

\section{American Put Options: Bounded Rate Perturbations}\label{app:options}
In this appendix, we derive UQ bounds on the value of American put options for another class of rate perturbations.  This is a simpler example to analyze than the Vasicek model from Section \ref{sec:Vasicek} and provides a good benchmark case, as we will also have access to comparison principle bounds.  We use the terminology and notation introduced in Section \ref{sec:options}.

Specifically, here we consider interest rate perturbations of the following form: 
\begin{flalign}\label{eq:Delta_r_bound}
\text{\bf      Alternative Models:} &&\Delta r_-\leq h_{-}(t)\leq \Delta r(t,\cdot,\cdot) \leq h_{+}(t)\leq \Delta r_+, \,\,t\geq 0,&& &&
\end{flalign}
 where $\Delta r_\pm\in\mathbb{R}$, $r+\Delta r_-> 0$, and  $h_{-},h_+:[0,\infty)\to\mathbb{R}$. The parameters $\Delta r_-$ and $\Delta r_+$ define a fluctuation range, with $h_\pm$ allowing for specification of a time-dependent envelope on the fluctuation.

 An elementary bound on $\widetilde{X}_t$ follows  from the uniform bounds $\Delta r_-\leq \Delta r(t,x,y)\leq \Delta r_+$ together with the comparison principle: 
 \begin{align}\label{eq:stop_time_ineq}
X^{r+\Delta r_-}_t\leq\widetilde X_t\leq X^{r+\Delta r_+}_t, \text{ and   hence
 } \tau\circ X^{r+\Delta r_-}\leq \tau\circ\widetilde X\leq \tau\circ X^{r+\Delta r_+}.
\end{align}

Using only the information $\Delta r_-\leq \Delta r(t)\leq \Delta r_+$, the optimal UQ bounds can be obtained from \req{eq:Delta_r_bound} and  \req{eq:stop_time_ineq}, together with the exact value for the constant-rate processes, \req{eq:finance_base_QoI}:
\begin{align}\label{eq:finance_naive_bound}
 (K-L)\left({L}/{X_0}\right)^{2(r+\Delta r_+)/\sigma^2}\leq{E}\left[\widetilde{V}_{\tau\circ\widetilde{X}}[\widetilde{X},\widetilde{Y}]\right]\leq (K-L)\left({L}/{X_0}\right)^{2(r+\Delta r_-)/\sigma^2}.
\end{align}
This  provides a useful comparison for the UQ bounds derived via the methods developed above.

To use the goal-oriented method, first note the relative entropy bounds (obtained from Girsanov's theorem):
\begin{align}\label{eq:finance_rel_ent}
&R({P}_{\widetilde{X},\widetilde{Y}}|_{\mathcal{F}_{\tau\wedge n}}\|P_{X^r,Y}|_{\mathcal{F}_{\tau\wedge n}})\leq {E}_{\widetilde{X},\widetilde{Y}}\left[G_n\right],\,\,\,\,\,\,\,G_n[x,y]\equiv \frac{\sigma^{-2}}{2}\int_0^{\tau( x)\wedge n}|\Delta r(s,{x}_s,{y}_s)|^2ds.
\end{align}
$G_n$ is $\mathcal{F}_{\tau\wedge n}$-measurable, hence we can   use Theorem \ref{thm:goal_info_ineq3} to obtain
\begin{align}\label{eq:option_info_ineq}
\pm {E}_{\widetilde{X},\widetilde{Y}}\left[F_{\tau\wedge n}\right]\leq&\inf_{c>0}\left\{\frac{1}{c}\log E_{X^r,Y}\left[\exp\left(\pm cF_{\tau( x)\wedge n}+\frac{\sigma^{-2}}{2}\int_0^{\tau( x)\wedge n}|\Delta r(s,{x}_s,{y}_s)|^2ds\right)\right]\right\}.
\end{align}
The above steps are justified by the bounds on $\Delta r$.

\begin{figure}
\begin{minipage}[b]{0.5\linewidth}
  \includegraphics[height=6cm]{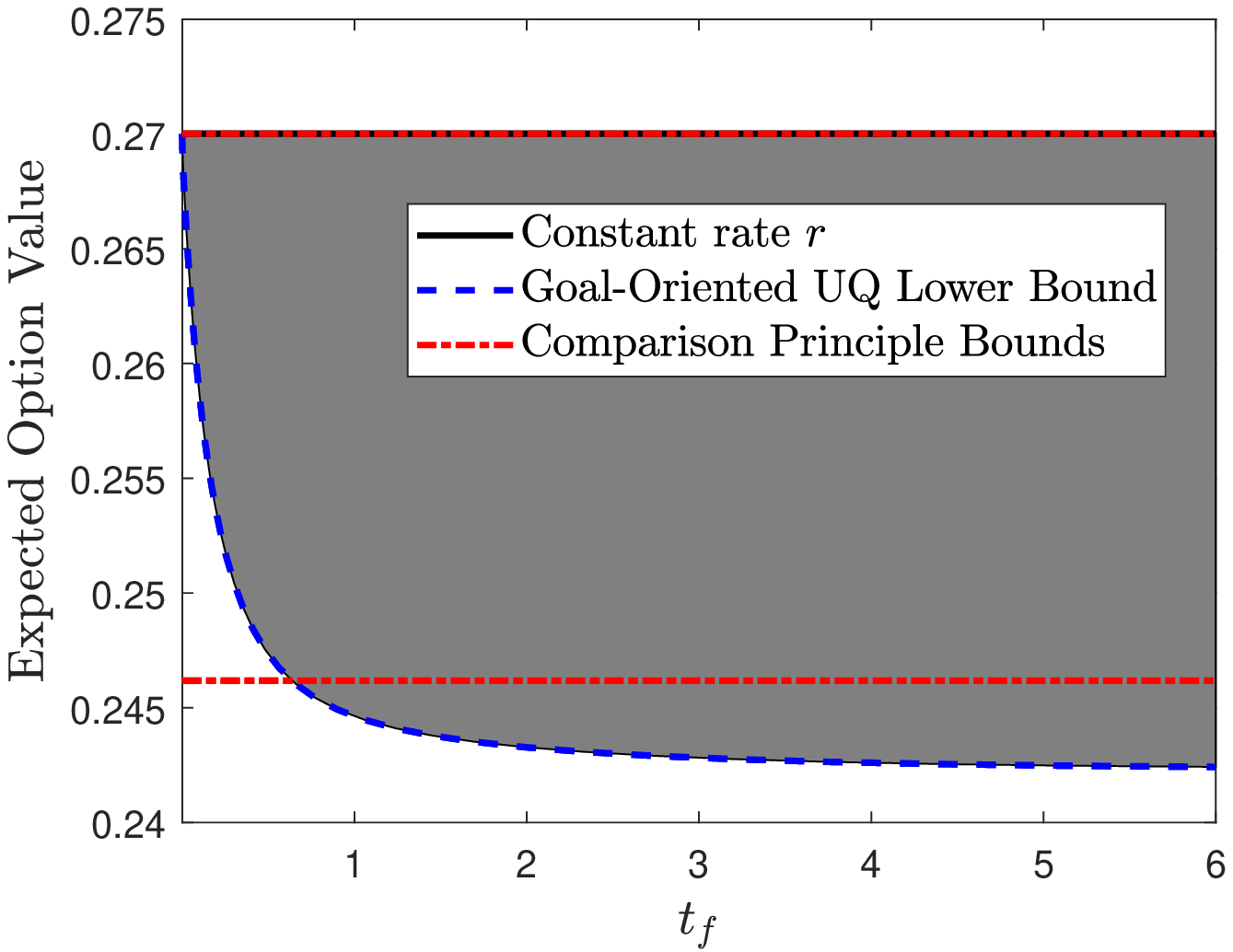}
 \end{minipage}
 \begin{minipage}[b]{0.5\linewidth}
  \includegraphics[height=6cm]{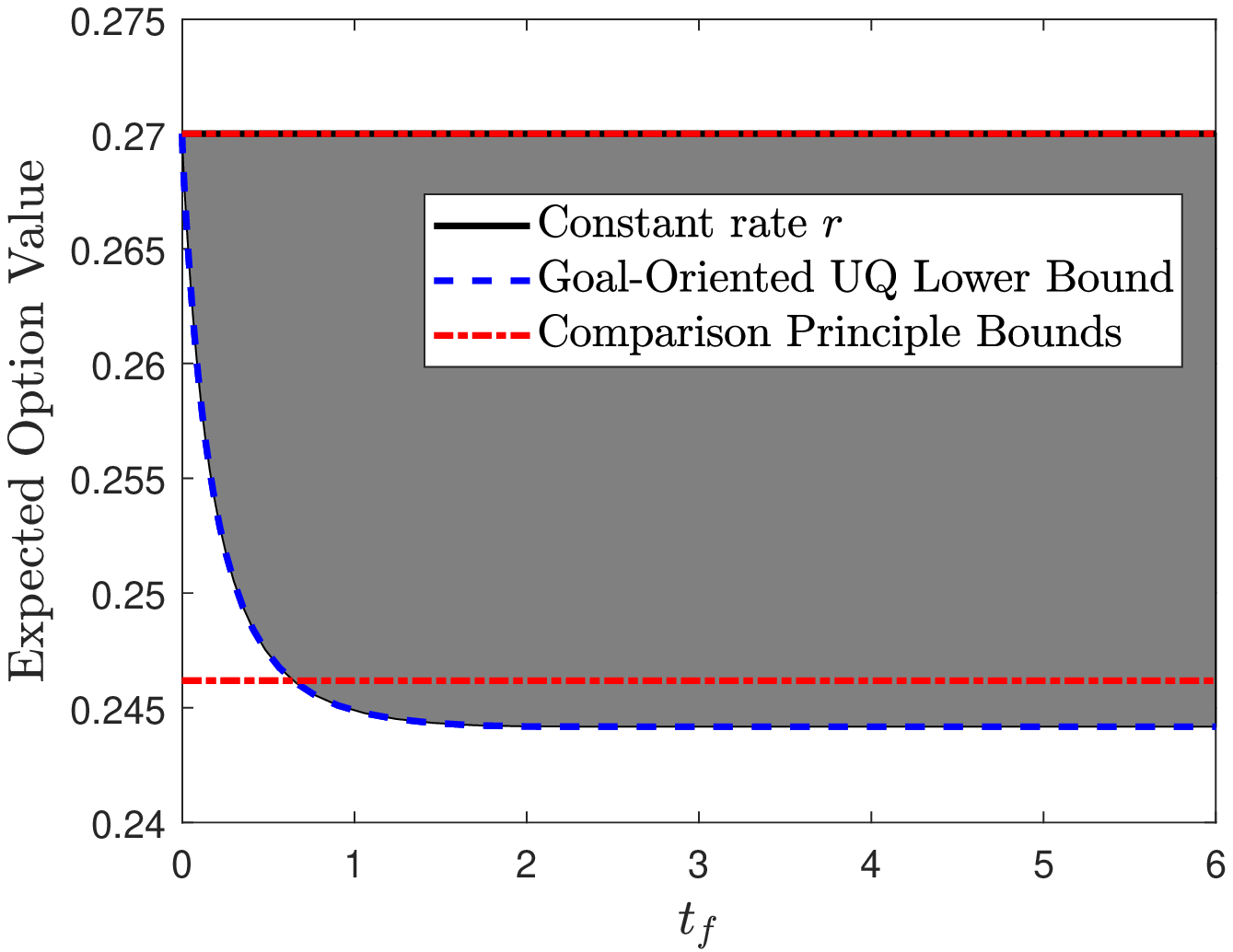}
 \end{minipage}
\caption{ Bounds on the expectation of the option value as a function of $t_f$, the time by which the interest rate is assumed to have completed the drop from $r+\Delta r_+$ to $r$; parameter values are  $r=2$, $K=1$, $L=1/2$, $\sigma=3$, $X_0=2$, $\Delta r_+=0.3$.  The dashed blue curve is the lower bound obtained from our method, \req{eq:option_UQ_bounded_dr}. The upper and lower red  lines are the  comparison-principle bounds based solely on knowledge of $\|\Delta r\|_\infty$; see \req{eq:finance_naive_bound}. The expected value in the constant-rate, $r$, model (black line, \req{eq:finance_base_QoI}) is the same as the optimal upper bound. The optimal upper bound together with the goal-oriented lower bound constrain the QoI to the gray region.  The left plot was obtained using the base-model rate fixed at $r$ and the right plot was obtained by minimizing over the rate parameter that is assigned to the baseline model, as described in Remark \ref{remark:kappa_min} (i.e., replacing $r$ with $\kappa$ on the right-hand-side of \req{eq:option_UQ_bounded_dr}, replacing $h_\pm$ with $h^\kappa_-=r-\kappa$, $h^\kappa_+=r-\kappa+\Delta r_+1_{[0,t_f]}$, and minimizing over $\kappa$). Note that here, the best performance is achieved by  a combination the comparison-principle bounds with our goal-oriented bounds.}  
\label{fig:options_plot_r_drop}
\end{figure}

The bounds on $\Delta r$ also allow us to take $n\to\infty$ in \req{eq:option_info_ineq} to obtain
 \begin{align}\label{eq:option_UQ}
\pm E\left[\widetilde{V}_{\tau\circ\widetilde{X}}[\widetilde{X},\widetilde{Y}]\right]=&\pm {E}_{\widetilde{X},\widetilde{Y}}\left[F_{\tau}\right]=\lim_{n\to\infty}\pm {E}_{\widetilde{X},\widetilde{Y}}\left[F_{\tau\wedge n}\right]\\
\leq&\inf_{c>0}\left\{\frac{1}{c}\log E_{X^r,Y}\left[\exp\left(\pm cF_{\tau(x)}+\frac{\sigma^{-2}}{2}\int_0^{\tau( x)}|\Delta r(s,{x}_s,{y}_s)|^2ds\right)\right]\right\}.\notag
\end{align}
  Bounding $\Delta r$ in terms of $h_\pm$ then yields
 \begin{align}\label{eq:option_UQ_bounded_dr}
&\pm{E}\left[\widetilde{V}_{\tau\circ\widetilde{X}}[\widetilde{X},\widetilde{Y}]\right]\leq\inf_{c>0}\left\{\frac{1}{c}\log E_{X^r}\left[M_c^{\pm}\right]\right\},\\
&M^\pm_c[x]\equiv\exp\left(\frac{\sigma^{-2}}{2}\!\int_0^{\tau( x)}\!|h(s)|^2ds\pm c(K-L)e^{-\,\int_0^{\tau(x)} r+h_{\mp}(s)ds}1_{\tau(x)<\infty}\right),\notag
\end{align}
where $h(t)\equiv\max\{h_+(t),-h_-(t)\}$.  The expectation can be evaluated using   the known distribution of $\tau\circ X^r$ (see the discussion surrounding \req{eq:tau_Xr}):
\begin{align}\label{eq:option_bounded_dr_expectation}
E_{X^r}[M^\pm_c]=&\frac{|a|}{\sqrt{2\pi}}\int_0^\infty \!\!\! \exp\!\left(\frac{\sigma^{-2}}{2}\!\int_0^th_s^2ds\pm c(K-L)e^{-\int_0^t r+h_{\mp}(s)ds}-\frac{(a-\mu t)^2}{2t}\right)t^{-3/2}dt\\
&+\exp\!\left(\frac{\sigma^{-2}}{2}\int_0^{\infty} h_s^2ds\right)(1-e^{\mu a-|\mu a|}).\notag
\end{align}
Note that all reference to the driving process $Y$, which is not coupled to $X^r$, has been eliminated.

\begin{remark}\label{remark:kappa_min}
One can obtain somewhat tighter bounds in \req{eq:option_UQ_bounded_dr} by optimizing over the parameter that defines the baseline model, similarly to \req{eq:goal_info_ineq_base_param}; in other words,  replace $r$ with a parameter, $\kappa$, on the right-hand side, find the corresponding $h^\kappa_\pm$ satisfying 
$h_-^\kappa\leq r-\kappa+\Delta r\leq h_+^\kappa$, and minimize over $\kappa$; see Figure \ref{fig:options_plot_r_drop}.
\end{remark}

 We show numerical results for the following scenario: Suppose the rate drops from $r+\Delta r_+$ to $r$, and is certain to have completed this drop by time $t_f$ (one could also study a similar scenario where $r$ increases), but the timing and profile of the drop is otherwise unknown, i.e., we consider $\Delta r_+$ and $t_f$ to be known, and the only constraint on $\Delta r$ is  $0\leq \Delta r(t,\cdot,\cdot)\leq \Delta r_+ 1_{[0,t_f]}(t)$. Using $h_{-}=0$ and $h_+(t)=\Delta r_+ 1_{[0,t_f]}(t)=h(t)$,  UQ bounds are obtained by combining \req{eq:option_UQ_bounded_dr} with \req{eq:option_bounded_dr_expectation}.

In Figure \ref{fig:options_plot_r_drop} we show the resulting bounds on the option value, as a function of $t_f$. Note that the bounds resulting from our UQ method are an improvement over the comparison-principle bound  \req{eq:finance_naive_bound} for an initial range of $t_f$, and stabilizes at a value near the comparison-principle bound for large $t_f$. For large $t_f$ our method is sub-optimal, but still competitive. Minimizing over the parameters assigned to the baseline model (right plot) improves the bound.  We find similar behavior when assuming other time-dependent envelopes of $\Delta r$. It should again be noted that  the comparison principle is available only in very special circumstances and is used here for benchmarking purposes; see Section \ref{sec:Vasicek} for a more realistic  model where the comparison principle is not an effective tool.

\section*{Acknowledgments}
The research of J.B., M.K., and L. R.-B. was partially supported by NSF TRIPODS CISE-1934846. The research of M.K and L. R.-B. was partially supported by the National Science Foundation (NSF) under the grant DMS-1515712 and the Air Force Office of Scientific Research (AFOSR) under the grant FA-9550-18-1-0214.

\bibliography{UQStoppingTime_arXiv}



\end{document}